\newcommand{\rep}[2]{
{\Yvcentermath1 \Yboxdim{10pt} \yng(1)_{#1}\yng(1)_{#2}}}
\newcommand{\repg}[4]{{\Yvcentermath1 \young(#1)}_{#2}{\Yvcentermath1 \young(#3)}_{#4}}
  \def\Delta{Delta}%
  \def\neq{!=}%
  \def\({}%
  \def\){}%
  \def\texttt#1{<#1>}%
\DeclareMathOperator{\Hom}{Hom}
\DeclareMathOperator{\spec}{Spec}
\DeclareMathOperator{\cok}{Coker}
\DeclareMathOperator{\stab}{STab}
\DeclareMathOperator{\add}{add}
\address{University of Leeds, LS2 9LT, Leeds, United Kingdom}
\email{ll13s4m@leeds.ac.uk}
\keywords{Complex Reflection groups, hyperplane arrangements, Cohen-Macaulay modules, matrix factorizations, noncommutative desingularization}
\newtheorem{theorem}{Theorem}[section]
\theoremstyle{definition}
\newtheorem{definition}[theorem]{Definition}
\newtheorem{remark}[theorem]{Remark}
\newtheorem{corollary}{Corollary}[theorem]
\newtheorem{lemma}[theorem]{Lemma}
\newtheorem{example}[theorem]{Example}
\begin{document}
\title[NCRs for the Discriminant of $G(m,p,2)$]{Non-Commutative Resolutions for the Discriminant of the Complex Reflection group $G(m,p,2)$}
\author{Simon May}

\maketitle
\begin{abstract}
  We show that for the family of  complex reflection groups $G=G(m,p,2)$ appearing in the Shephard--Todd classification, the endomorphism ring of the reduced hyperplane arrangement $A(G)$ is a non-commutative resolution for the coordinate ring of the discriminant $\Delta$ of $G$. This furthers the work of Buchweitz, Faber and Ingalls who showed that this result holds for any true reflection group. In particular, we construct a matrix factorization for $\Delta$ from $A(G)$ and decompose it using data from the irreducible representations of $G$. For $G(m,p,2)$ we give a full decomposition of this matrix factorization, including for each irreducible representation a corresponding a maximal Cohen--Macaulay module. The decomposition concludes that the endomorphism ring of the reduced hyperplane arrangement $A(G)$ will be a non-commutative resolution.
\end{abstract}

\section{Introduction}

Let $G\subseteq GL(n,\mathbb{C})$ be a finite group acting on $\mathbb{C}^n$. The Chevalley--Shephard--Todd theorem shows that the quotient $\mathbb{C}^n/G$ is smooth if and only if $G$ is a complex reflection group, that is $G$ is generated by complex reflections. The group $G$ also acts on $S:=$Sym$_\mathbb{C}(\mathbb{C}^n)$ and if $G$ is a complex reflection group, the invariant ring $R:=S^G$ is isomorphic to a polynomial ring. The \emph{discriminant} $\Delta$ is the image of the hyperplane arrangement $A(G)$ in $\mathbb{C}^n/G$ under the natural projection $\mathbb{C}^n \rightarrow \mathbb{C}^n/G$. In particular $V(\Delta)$ is a singular hypersurface in $\mathbb{C}^n/G$.  The discriminant is given by a polynomial $\Delta \in R$, with coordinate ring $R/(\Delta)$. The \emph{reduced hyperplane arrangement} is defined by a polynomial $z \in S$ and we will decompose the coordinate ring $S/(z)$ as a module over $R/(\Delta)$.
Shephard and Todd classified all the complex reflection groups into two cases; an infinite family, $G(m,p,n)$ with $1\leq m$, $p|m$, $1\leq n$, and 34 exceptional groups. In the 2-dimensional case, Bannai in \cite{bannai1976}, calculated all discriminants of the complex reflection groups. These are shown to be singular curves of type ADE. In particular the coordinate rings of these curve singularities all have a finite amount of (non-isomorphic) Cohen-Macaulay ($CM$) modules, which are listed in \cite{YujiCM}. 

The main theorem of this paper is:

\begin{theorem} \label{MainT} (Thm \ref{first}, Thm \ref{mod odd}, Thm \ref{mod even}, Thm \ref{Theorem:Full})
Let $G=G(m,p,2)$, then all non-isomorphic $CM$-modules of $R /\Delta$ appear at least once in the decomposition of $S/(z)$ as $CM$ modules over $R/(\Delta)$. We also determine a precise decomposition of $S/(z)$ into $CM$ modules over $R/(\Delta)$.
\end{theorem}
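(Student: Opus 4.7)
The overall strategy is to use the action of $G$ on $S$ to decompose $S$ (and hence the quotient $S/(z)$) via its isotypical components. Since $G$ is finite, we have a $G$-equivariant decomposition
\begin{equation*}
S \;\cong\; \bigoplus_{\rho \in \widehat{G}} V_\rho \otimes_{\mathbb{C}} \Hom_G(V_\rho, S),
\end{equation*}
where $\widehat{G}$ is the set of irreducible representations of $G$ and each $M_\rho := \Hom_G(V_\rho, S)$ is a finitely generated $R$-module. By the classical argument (going back to Auslander and used by Buchweitz--Faber--Ingalls), each $M_\rho$ is a CM $R$-module, and passing to the quotient by $z$ one obtains CM $R/(\Delta)$-modules. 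The first step of the plan is therefore to write out this decomposition explicitly, and check CM-ness of $M_\rho / z M_\rho$ as $R/(\Delta)$-modules using that $z$ is a non-zero-divisor on each $M_\rho$.

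The second step is to make the irreducible representations of $G(m,p,2)$ completely explicit. The group $G(m,p,2)$ has a well-known classification of irreducibles into one- and two-dimensional pieces, parametrized by pairs of characters of $\mu_m$ modulo the action of the symmetric group $S_2$, together with a splitting in the case when a representation is fixed by this $S_2$-action (which happens only in certain parity/divisibility regimes in $m$ and $p$). For each such $\rho$ I would compute $M_\rho$ as an explicit ideal (or rank-two module) in $S$ by picking out basic covariants, i.e.\ polynomial generators transforming according to $\rho$. Passing to $S/(z)$ and using that $\Delta$ is built from the discriminant polynomial will yield an explicit matrix factorization of $\Delta$ for each $\rho$, whose size equals $\dim V_\rho$.

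The third step is to match these matrix factorizations with the known indecomposable maximal Cohen--Macaulay modules over the ADE curve singularity $R/(\Delta)$. By Bannai's classification, for each of the groups $G(m,p,2)$ the discriminant is an ADE singularity whose CM modules are tabulated in Yoshino. I would compare ranks, multiplicities, and (where necessary) the specific matrix factorizations produced by the representation-theoretic construction with those in Yoshino's list; the parity/divisibility constraints on $m$ and $p$ naturally split the argument into the four sub-theorems \ref{first}, \ref{mod odd}, \ref{mod even} and \ref{Theorem:Full}, which I would handle in turn. Finally, counting the distinct isomorphism classes that arise and comparing against the full list of CM modules in Yoshino for the relevant ADE singularity will show that every non-isomorphic CM module appears at least once, giving the non-commutative resolution.

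The main obstacle I expect is the last step of matching: the representation-theoretic matrix factorizations are natural but indexed by $\widehat{G}$, whereas Yoshino's list is indexed combinatorially by the relevant ADE Dynkin data, and the dictionary between the two is not automatic. In particular, whenever a representation of $G(m,p,2)$ splits (or fails to split) upon restriction from $G(m,1,2)$, the corresponding matrix factorization may decompose into indecomposables in a way that depends delicately on the parities of $m$ and $m/p$; handling these boundary cases carefully is where the case split into four theorems becomes essential, and is the technical heart of the argument.
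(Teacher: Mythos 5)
Your proposal follows essentially the same route as the paper: decompose $S$ into isotypical components, observe that multiplication by $z$ together with $j$ yields a matrix factorization of $\Delta$ on each component, compute these explicitly for the one- and two-dimensional irreducibles of $G(m,p,2)$ (the paper does this via orbits of hyperplanes for the linear characters and via higher Specht polynomials for the two-dimensional ones), and match the results against Yoshino's list of indecomposable $CM$ modules over the ADE curve singularity, with the same case split according to the parities of $p$ and whether $m=p$. The only cosmetic difference is that you work with $\Hom_G(V_\rho,S)$ of rank $\dim V_\rho$ where the paper uses the full isotypical component of rank $(\dim V_\rho)^2$, which merely changes multiplicities by a factor of $\dim V_\rho$.
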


This result is of particular interest because an immediate corollary is that End$_{R/(\Delta)}S/(z)$ is a \textit{non-commutative resolution} for $R/(\Delta)$. That is:

\begin{definition} \cite{DITR} Let $A$ be a commutative noetherian ring. Let $M$ be a finitely generated module which is faithful then End$_{A}(M)$ is called a \emph{non-commutative resolution} (NCR) of $A$ if gldim End$_{A}(M) < \infty$.

\end{definition} $S/(z)$ giving a NCR is an application of a result of Auslander, which was presented in the context of artin algebras, see \cite[Section P]{NCRs} for a great survey on NCRs. Recall that a $CM$-local ring $A$ if of finite $CM$-type if it has a finite number of non-isomorphic $CM$-modules.

\begin{theorem}\cite{FiniteGlobal}
Let $A$ be a $CM$-local ring which is of finite $CM$ type then a NCR arises as follows: Let $M_0,...,M_n$ be a list of all non-isomorphic $CM$ modules. Let $P=M_0 \oplus \dots \oplus M_n$, then $\Lambda=End_A(P)$ has finite global dimension and in particular is a NCR for $A$.
\end{theorem}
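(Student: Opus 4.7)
The plan is to bound the global dimension of $\Lambda = \mathrm{End}_A(P)$ by constructing explicit projective resolutions for arbitrary finitely generated $\Lambda$-modules. The key tool is Auslander's equivalence: since $P$ is an additive generator of the category $CM(A)$ of maximal Cohen--Macaulay $A$-modules, the functor $\Hom_A(P,-)$ restricts to an equivalence between $\mathrm{add}(P) = CM(A)$ and the category of finitely generated projective $\Lambda$-modules. Under this equivalence, projective presentations of $\Lambda$-modules lift to maps between CM modules on the $A$-side, and conversely.

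First I would pick an arbitrary finitely generated $\Lambda$-module $N$ and build a projective presentation $\Hom_A(P,M_1)\to \Hom_A(P,M_0)\to N\to 0$ with $M_0,M_1 \in CM(A)$; the existence of such a presentation is immediate from Auslander's equivalence together with the fact that $\Lambda$ is a Noetherian algebra. Because $\Hom_A(P,-)$ is left exact and faithful on $CM(A)$, the kernel of the underlying $A$-module map $f\colon M_1\to M_0$ encodes the first syzygy of $N$. This kernel embeds into the CM module $M_1$, hence has depth at least $1$. Iterating --- covering successive syzygies by CM modules on the $A$-side --- and invoking the depth lemma on the resulting short exact sequences, the depth strictly increases at each step. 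After at most $d = \dim A$ iterations one obtains a module $K$ of depth $d$, that is $K \in CM(A)$.

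Because $P$ is an additive generator of $CM(A)$, we have $K \in \mathrm{add}(P)$, so $\Hom_A(P,K)$ is projective over $\Lambda$. Translating the construction back through $\Hom_A(P,-)$ yields a finite projective resolution of $N$ of length bounded in terms of $d$, which shows $\mathrm{gldim}\,\Lambda<\infty$. Hence $\Lambda$ is a non-commutative resolution of $A$ in the sense of the definition above.

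The main obstacle, in my view, is the careful syzygy/depth bookkeeping to ensure that depth really increases at each stage; this requires choosing CM covers so that the resulting short exact sequences $0\to K_{i+1}\to M_{i+1}\to K_i\to 0$ have $M_{i+1}\in CM(A)$, and then applying the depth lemma to conclude $\mathrm{depth}\,K_{i+1}\geq \mathrm{depth}\,K_i + 1$. A secondary subtlety is verifying that kernels behave correctly under $\Hom_A(P,-)$, which reduces to faithfulness and left-exactness combined with the additive-generator property of $P$; once these ingredients are in hand, the rest is essentially formal.
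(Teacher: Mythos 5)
Your outline is essentially the standard Auslander argument, and it is correct in its main thrust; note that the paper does not prove this statement at all but simply cites it (\cite{FiniteGlobal}), so there is no ``paper proof'' to diverge from --- you have reconstructed the proof behind the citation. Two refinements are worth making. First, a small indexing point: with a projective presentation $\Hom_A(P,M_1)\to\Hom_A(P,M_0)\to N\to 0$ and $K=\ker(f\colon M_1\to M_0)$, left exactness gives the four-term exact sequence $0\to\Hom_A(P,K)\to\Hom_A(P,M_1)\to\Hom_A(P,M_0)\to N\to 0$, so $\Hom_A(P,K)$ is the \emph{second} syzygy of $N$, not the first; this is what ultimately yields the sharper bound $\operatorname{gldim}\Lambda\le\max(2,\dim A)$. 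Second, and this is the one place where your sketch is genuinely underspecified: to splice the next step onto the resolution you need $\Hom_A(P,M_{2})\to\Hom_A(P,K)\to 0$ to be exact, and this does \emph{not} follow from left exactness and faithfulness alone --- an arbitrary surjection $M_2\twoheadrightarrow K$ from a CM module will not do. You must choose $M_2\to K$ to be a right $\add(P)$-approximation (take generators of the finitely generated $\Lambda$-module $\Hom_A(P,K)$ to get $P^n\to K$); it is automatically surjective because $A\in\add(P)$, and then the depth lemma applied to $0\to K_1\to P^n\to K\to 0$ raises the depth as you describe, terminating in a maximal Cohen--Macaulay kernel which lies in $\add(P)$ precisely because $A$ has finite CM type. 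With those two adjustments the argument is complete.
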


Theorem \ref{MainT} is an extension of \cite{BFI} where, for a complex reflection group generated by reflections of order 2, it is shown that End$_{R/\Delta}S/(z)$ is a NCR for $R/(\Delta)$. The groups $G(2k,k,2)$ are generated by order 2 reflections. We first introduce the main objects of study; the discriminant, $\Delta$ and the matrix factorization coming from $z$. Then we will detail how we can use the irreducible representations to decompose the matrix factorization using isotypical components. After this, we first tackle the 1-dimensional representations - for any complex reflection group, these give matrix factorizations corresponding to the irreducible components of the discriminant by considering certain orbits of hyperplanes. For the higher dimensional representations we need to find basis elements for the isotypical components for the coinvariant algebra, which in the case of $G(m,p,2)$ are given by higher Specht polynomials, see \cite{ariki1997}. For the groups $G(m,p,2)$ we give a full description of the decomposition of $S/(z)$ as $R/(\Delta)$ $CM$ modules, in particular we describe which irreducible representations corresponds to which $CM$ module.

This result aims to be a next step into a version of the McKay correspondence for complex reflection groups:  McKay first explained in 1979 that if one considers a finite subgroup $G$ of $SL(2,\mathbb{C})$ and constructs the McKay quiver, from only the data of the irreducible representations of $G$, then this quiver, after collapsing double arrows and forgetting direction, yields an affine simply-laced ADE Dynkin diagram. The classical McKay correspondence is then the observation that these graphs are the same as the desingularisation graph of the orbit singularities $\mathbb{C}^2/G$, see e.g. \cite{G-S-V}.

The correspondence was then explained algebraically by Auslander in \cite{ARq}: we call a subgroup $G \subset GL(n,\mathbb{C})$ \emph{small} if it contains no complex reflections. Auslander proved that for a small subgroup $G \subset GL(2,\mathbb{C})$, the Auslander-Reiten (AR) quiver of $CM$ modules over the invariant ring $R$, under the action of $G$ on $\mathbb{C}^2$, coincides with the McKay quiver of $G$. Furthermore, if $G\subset GL(n,\mathbb{C})$ is small and $S$ the polynomial ring in $n$ variables then the skew-group ring $S* G$ is isomorphic to the endomorphism ring End$_R(S)$. In particular $S* G$ has finite global dimension.

Auslander's version of the McKay correspondence can translated into a statement about NCRs: If $G\subset GL(n,\mathbb{C})$ contains no complex-reflections, then $S*G$ is an NCR for $R=S^G$. If $G$ contains complex reflections the skew group ring fails to be a NCR. This result was then was extended in \cite{BFI} for finite true reflection groups, i.e complex reflection groups generated by order $2$ reflections, where End$_{(R/\Delta)}(S/(z))$ is an NCR for  $R/(\Delta)$. Our result extends this to the case of $G(m,p,2)$. Here we note that the $G(m,m,2)$ ($m\neq 2$) and $G(2p,p,2)$ are true reflection groups and so the result overlaps with that of \cite{BFI}.

 The exceptional group case of rank $2$ is still open, but the method would be the same as the $G(m,p,2)$ case, if basis elements for the coinvariant algebra are calculated. Furthermore for $G$ of higher rank the decomposition of $S/(z)$ into $CM$ modules is unclear, since $R/(\Delta)$ is of not of finite $CM$ type. 
 
 We start with some preliminaries and fix some conventions and notation. Chapter 3 will introduce the main object of study, then Chapter 4 will use linear characters to calculate the corresponding modules of the decomposition. We then recap the representation theory of the groups $G(m,p,n)$ and in Section 6 we calculate the decomposition of $G(m,1,2)$. After covering higher Specht polynomials for $G(m,p,2)$, in Section 8 we calculate the decomposition in the different cases. We then finish with an example of a true-reflection group which illustrates the result from \cite{BFI}.

\section{Preliminaries}
\subsection{Reflection groups and Invariant rings} 

The main context of this paper is in invariant theory over $\mathbb{C}$, see \cite{kane2001reflection} for a general reference. Let $V$ a finite dimensional vector space over $\mathbb{C}$.
Recall that the symmetric algebra of $V$ denoted $S(V)$, has the structure of a (graded) commutative $\mathbb{C}$-algebra where $S(V)=\bigoplus_{j=0}^\infty S_j(V)$ and if $\{x_1,...,x_n\}$ is a basis for $V$ then $S(V)$ is a polynomial ring of the form $S(V)=\mathbb{C}[x_1,...,x_n]$. Let $G$ be a finite subgroup of $GL_n(V)$. The action on $G$ on $V$ is extended to $S(V)$ by the following: $\phi \cdot (x_1,...,x_n) = (\phi \cdot x_1)(\cdots)(\phi \cdot x_n)$. The invariant ring is $S(V)^G = \{f \in S(V): \phi \cdot f = f \text{ for all } \phi \in G\}$. We will from now on denote $S(V)$ as $S$. 

\begin{definition} Let $V$ be a $\mathbb{C}$-vector space, then a \textit{complex reflection} is a (diagonalisable) linear isomorphism $s:V \rightarrow V$, which is not the identity such that it fixes a hyperplane pointwise. A group generated by complex reflections is called a complex reflection group.
\end{definition}

\begin{theorem}(Chevalley-Shephard-Todd). \cite{Che55} \label{CST}\\
Let $V$ be a $\mathbb{C}$-vector space and let $G$ be a finite subgroup of $GL(V)$ then $S^G=R \cong \mathbb{C}[f_1,\dots,f_n]$, where $f_i$ are algebraically independent homogeneous polynomials of positive degree if and only if $G$ is a finite complex-refection group.\end{theorem}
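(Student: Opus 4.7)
The plan is to prove the two implications separately, following the classical Shephard--Todd--Chevalley approach.

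For the backward direction, assume $G$ is generated by complex reflections. I would pick a minimal homogeneous generating set $f_1,\dots,f_m$ of the ideal $S^G_+\subseteq S^G$ (equivalently, a homogeneous $\mathbb{C}$-basis of $S^G_+/(S^G_+)^2$) and prove that these are algebraically independent with $m=n$. The central technical input is \emph{Chevalley's syzygy lemma}: if $g_1,\dots,g_m\in S$ are homogeneous with $\sum g_i f_i = 0$, then each $g_i$ lies in $S\cdot S^G_+$. I would prove this by induction on degree, using the key divisibility property that for any reflection $s\in G$ with reflecting hyperplane $\{\ell_s=0\}$ and any $h\in S$, the element $h-s\cdot h$ is divisible by $\ell_s$; averaging over $G$ reduces an arbitrary syzygy to one with invariant coefficients, which by minimality of $f_1,\dots,f_m$ must be trivial. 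Given this lemma, algebraic independence follows from differentiating a would-be polynomial relation of minimal total degree to extract a syzygy violating the lemma. Finally, graded Nakayama gives $S^G=\mathbb{C}[f_1,\dots,f_m]$ as a $\mathbb{C}$-algebra, and comparing Krull dimensions (using that $S$ is integral over $S^G$) yields $m=\dim S^G=\dim S=n$.

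For the forward direction, suppose $S^G=\mathbb{C}[f_1,\dots,f_n]$ with $\deg f_i=d_i$, and let $G'\leq G$ be the subgroup generated by all complex reflections in $G$. The backward direction applied to $G'$ gives $S^{G'}=\mathbb{C}[f'_1,\dots,f'_n]$ with some basic degrees $d'_j$. Molien's formula, evaluated at leading order as $t\to 1$, yields $|G|=\prod_i d_i$ and $|G'|=\prod_j d'_j$; expanding to the next order shows that the number of complex reflections in $G$ equals $\sum_i(d_i-1)$, and as $G$ and $G'$ contain exactly the same reflections this forces $\sum_i d_i=\sum_j d'_j$. Because $S^G$ is regular and $S^{G'}$ is a direct summand of $S$ as a graded $S^G$-module (via the Reynolds operator for $G'$), $S^{G'}$ is a free $S^G$-module, so the relative Hilbert series $P(t):=\prod_i(1-t^{d_i})/\prod_j(1-t^{d'_j})$ is a polynomial in $\mathbb{Z}[t]$. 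It has degree $\sum d_i-\sum d'_j=0$ and constant term $1$, so $P(t)\equiv 1$; by unique factorization into cyclotomic polynomials the multisets $\{d_i\}$ and $\{d'_j\}$ coincide, whence $|G|=|G'|$ and $G=G'$, so $G$ is generated by complex reflections.

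The main obstacle is Chevalley's syzygy lemma in the backward direction: although its statement is transparent, its proof needs a careful induction entangling the degree of the syzygy, the action of individual reflections, and the minimality of the chosen generators. Once that lemma is in hand, everything else becomes standard graded commutative algebra together with Molien and Hilbert series bookkeeping.
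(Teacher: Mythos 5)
The paper does not prove this statement; it is quoted as the classical Chevalley--Shephard--Todd theorem with a citation to \cite{Che55}, so there is no in-paper argument to compare against. Your outline is the standard proof found in the literature the citation points to --- Chevalley's syzygy-lemma argument for the reflection-implies-polynomial direction, and the Shephard--Todd/Serre converse via the reflection subgroup $G'$, Molien series, and the freeness of $S^{G'}$ over $S^G$ --- and both halves are correctly assembled, with the genuinely delicate step (the syzygy lemma and its use in proving algebraic independence) accurately identified as the technical core.
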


A set of polynomials $\{f_1,...,f_n\}$ which satisfy the above are called a set of basic invariants for $G$ A classification of finite irreducible complex reflection groups is given by Shephard-Todd:
\begin{theorem}[Shephard-Todd]\cite{FURG} \\
All irreducible complex reflection groups fall into one of the following families:
\begin{itemize}
    \item The infinite family $G(m,p,n)$, where $m,p,n \in \mathbb{N}\setminus\{0\}$, $p|m$ and $(m,p,n)\neq (2,2,2)$

    \item The exceptional complex reflection groups $G_4,...,G_{37}$.
\end{itemize}
\end{theorem}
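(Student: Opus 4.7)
The plan is to follow the original strategy of Shephard--Todd and split the classification into the \emph{imprimitive} and \emph{primitive} cases, where $G \subset GL(V)$ is called imprimitive if $V$ decomposes non-trivially as a direct sum $V = V_1 \oplus \cdots \oplus V_k$ permuted by $G$, and primitive otherwise. For the imprimitive case, I would first argue that for an irreducible reflection group the blocks $V_i$ may be taken one-dimensional: any complex reflection preserving the block system either acts by a non-trivial root of unity on a single $V_i$ or swaps two lines via a transposition-type reflection, and irreducibility of $G$ forces the blocks to be permuted transitively as lines. The resulting group then sits inside the normalizer of the diagonal torus in $GL_n$, a semidirect product of diagonal matrices with $S_n$. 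Tracking which scalar entries are allowed (a cyclic group $\mu_m$ of roots of unity) and which products are constrained to lie in a subgroup $\mu_{m/p}$ yields exactly the family $G(m,p,n)$; the exclusion $(m,p,n)\neq(2,2,2)$ reflects that this particular parameter gives the Klein four-group acting by coordinate sign changes, which is reducible.

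For the primitive case, the plan is to use the invariant-theoretic constraints supplied by Theorem \ref{CST} to cut the possibilities down to a finite, explicit list. If $d_1 \leq \cdots \leq d_n$ are the degrees of a set of basic invariants, then $|G| = \prod_{i=1}^n d_i$ and $\sum_{i=1}^n (d_i - 1)$ equals the number of reflections, while the existence of the determinant character and the isotropy around reflecting hyperplanes force further numerical restrictions. Combining these with Blichfeldt's classical enumeration of finite primitive subgroups of $PGL(n,\mathbb{C})$ (which is finite for each fixed $n$) bounds the rank $n$ and leaves only finitely many candidate projective images. One then verifies, by constructing explicit faithful representations and counting reflections, which candidates are genuine complex reflection groups, recovering exactly $G_4, \ldots, G_{37}$.

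The main obstacle is unambiguously the primitive case. The bounds from Blichfeldt's theorem are far from tight, so a substantial case-by-case analysis is required to rule out spurious candidates and to distinguish genuinely complex groups (such as $G_{32}$) from real Coxeter groups that sit inside the list (such as $G_{28}=F_4$ and the $E_6, E_7, E_8$ types appearing as $G_{35}, G_{36}, G_{37}$). In rank $n=2$ one cross-references the list against finite subgroups of $U(2)$ and checks which contain enough reflections to be generated by them, while for $n\geq 3$ the combinatorial constraints tighten but the enumeration grows. A complete accounting with no case missed is precisely what makes the original Shephard--Todd paper a substantial undertaking; modern reorganizations (Cohen, Lehrer--Taylor) streamline the presentation but do not eliminate the underlying casework.
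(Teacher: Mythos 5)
The paper does not prove this statement at all: it is quoted as a classical theorem and attributed to Shephard--Todd via the citation \cite{FURG}, so there is no internal argument to compare yours against. Judged on its own terms, your outline correctly reproduces the standard strategy (the one in Shephard--Todd's original paper and in the modern treatments of Cohen and Lehrer--Taylor): split into imprimitive and primitive groups, reduce the imprimitive case to monomial matrices over a one-dimensional block system to obtain $G(m,p,n)$, and attack the primitive case through numerical invariants of the degrees together with the classification of finite primitive linear groups. The exclusion of $(2,2,2)$ is also correctly explained: $G(2,2,2)$ is the Klein four-group and its two-dimensional reflection representation splits into the lines spanned by $(1,1)$ and $(1,-1)$, so it is not irreducible.

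The genuine gap is that the primitive case is described rather than carried out, and that case is the entire substance of the theorem. Saying that Blichfeldt's enumeration ``bounds the rank and leaves finitely many candidates'' and that one then ``verifies which candidates are genuine complex reflection groups'' is a statement of the task, not a completion of it: the claim that the surviving groups are \emph{exactly} $G_4,\dots,G_{37}$ requires exhibiting each of the $34$ groups with its generating reflections and, more delicately, proving that no further primitive reflection group exists in any rank. Neither direction follows from the numerical constraints you list ($|G|=\prod d_i$, number of reflections $=\sum(d_i-1)$); those constraints only narrow the search. There is also a small imprecision in the imprimitive half: you need to justify that an irreducible imprimitive reflection group admits a system of imprimitivity by \emph{lines} (not merely that one may ``take'' the blocks one-dimensional), which follows from analyzing how a reflection can permute blocks of dimension $\geq 2$, and you should note that some small imprimitive parameters coincide with primitive or reducible groups (e.g.\ $G(1,1,n)=S_n$ acts irreducibly only on an $(n-1)$-dimensional subspace), which is why careful statements of the classification add further exclusions. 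As a roadmap your proposal is accurate; as a proof it defers the hard enumeration, which is exactly why the paper, like essentially all users of this theorem, cites it rather than reproving it.
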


For the rest of the paper the following notation will be reserved: $G$ a complex reflection group of rank $n$, $S$ the polynomial ring $\mathbb{C}[x_1,...,x_n]$ on which $G$ acts and $R$ is the invariant ring $S^G$ under this action.

\subsection{Isotypical decomposition}
Recall that $S=\mathbb{C}[x_1,...,x_n]$ can be viewed as a graded vector space over $\mathbb{C}$. Since $G$ acts on $S$, it is a representation of $G$ and thus it has an isotypical (or canonical) decomposition. The following can be found in more detail in \cite{scott1996linear} Section 2.6. Stanley also studied the Isotypical decomposition of $S$ in \cite{stanley1979}, with more of a view towards the structure of the components as modules over $R$.

Since $G$ is a finite group, it has finitely many irreducible representations (up to isomorphism) and finite distinct characters determining these irreducible representations. Let $\chi_1,...,\chi_r$ be the list of characters that determine the irreducible representations $W_1,...,W_r$ respectively. Consider a decomposition of $S= \bigoplus_{i \in \mathbb{N}} T_i$ into irreducible representations. Then let $V_i$ be the direct sum of those $T_i$ which are isomorphic to $W_i$. Then:

\[S = V_1 \oplus \cdots \oplus V_r\]

$S$ has been decomposed into its canonical decomposition, and is a unique decomposition as shown in \cite{scott1996linear} Theorem 8. 
We call the $V_i's$ the \textit{isotypical} components of the corresponding character $\chi_i$. We will normally denote them by $S^G_\chi$. Note that if $triv$ denotes the trivial character, then $S^G_{triv}=S^G.$ The elements of $S^G_\chi$ are called the \textit{relative invariants} of $\chi$. 

\subsection{The discriminant of the group action}
 \label{Disc}
Let $G$ be a finite complex-reflection group acting on $V$, denote by $A(G)$ the set of reflecting hyperplanes of $G$. If $G$ is obvious in context, this is denoted by $A$. Using Maschke's theorem, one can show that given a hyperplane $H \in A$ the set of all elements that fix $H$ pointwise is a cyclic group. Let $e_H$ denote the order of the cyclic subgroup fixing $H$ and $\alpha_H$ be the defining linear equation for $H$ in $S$. 
We define two important polynomials for our study, which are obtained from the certain relative invariants of the linear characters of $G$: See \cite{Arrangments} Section 6  for a more detailed account of the following definitions and lemma. Fix $G$ as a finite complex reflection group and define:
\[z= {\displaystyle \prod_{H\in A}^{} \alpha_H}\]
and
\[j={\displaystyle \prod_{H\in A}^{} \alpha_H^{e_H-1}},\]
$z$ and $j$ are the relative invariants of the linear characters $\det$ and $\det^{-1}$ respectively. Note that this means they are a basis for the isotypical components of $\det$ and $\det^{-1}$ respectively. 
\begin{definition}
    Let $G$ be a complex-reflection group acting on $V$ with basis $\{x_1,...,x_n\}$, then the \textit{discriminant} $\delta$ of the group action is given by:
    \[\delta(x_1,...,x_n) = jz = {\displaystyle \prod_{H\in A}^{} \alpha_H^{e_H}} \]
\end{definition}
\begin{lemma}\cite{Arrangments} Lemma 6.44.
The discriminant of the group action is an invariant under the action of $G$, i.e $\delta \in R$.
\end{lemma}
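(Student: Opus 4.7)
The plan is to exploit the fact, recorded immediately before the lemma, that $z$ and $j$ are relative invariants of the linear characters $\det$ and $\det^{-1}$ of $G$ respectively. Unpacking the definition of a relative invariant, this means that for every $g\in G$,
\[
g\cdot z=\det(g)\,z, \qquad g\cdot j=\det(g)^{-1}\,j.
\]
Since the $G$-action on $S$ is by graded algebra automorphisms, it respects products, and so
\[
g\cdot \delta \;=\; g\cdot(jz)\;=\;(g\cdot j)(g\cdot z)\;=\;\det(g)^{-1}\det(g)\,jz\;=\;\delta.
\]
Hence $\delta$ is fixed by every element of $G$, which is precisely the statement $\delta\in R=S^G$.

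If one prefers to argue from scratch rather than invoking the character identifications of $z$ and $j$, I would proceed as follows. First, observe that $G$ permutes the set $A$ of reflecting hyperplanes via $g\cdot H:=g(H)$, since for any reflection $s$ fixing $H$ the conjugate $gsg^{-1}$ is a reflection fixing $g(H)$; this permutation preserves the pointwise stabilizer order, so $e_{g(H)}=e_H$. Second, for each $g\in G$ and $H\in A$ there is a scalar $c(g,H)\in\mathbb{C}^{*}$ with $g\cdot\alpha_H=c(g,H)\,\alpha_{g(H)}$, because $\alpha_{g(H)}$ is the unique (up to scalar) linear form cutting out $g(H)$. Substituting into $\delta=\prod_{H}\alpha_H^{e_H}$ and reindexing the product over the permuted set $A$ shows that $g\cdot\delta=\lambda(g)\,\delta$ for a scalar $\lambda(g)\in\mathbb{C}^{*}$, and the map $g\mapsto\lambda(g)$ is a one-dimensional character of $G$.

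The only real obstacle is to identify this character as trivial. The cleanest way is to do the same calculation for $z$ and $j$ separately: the analogous scalar appearing for $z$ is forced to be $\det(g)$ (this is the content of the relative-invariance of $z$ with character $\det$, as proved in the reference \cite{Arrangments} cited just above), and similarly the scalar for $j$ is $\det(g)^{-1}$; thus the scalar for $\delta=jz$ is their product $\det(g)\det(g)^{-1}=1$. Either way, the proof is essentially a one-line character computation, and I would present the first version above since the excerpt has already set up everything required.
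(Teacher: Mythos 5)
Your first argument is exactly the paper's proof: the paper simply says the lemma ``follows from the above discussion,'' meaning precisely the one-line computation $g\cdot\delta=(g\cdot j)(g\cdot z)=\det(g)^{-1}\det(g)\,jz=\delta$ using that $z$ and $j$ are relative invariants for $\det$ and $\det^{-1}$. Your proposal is correct and takes essentially the same approach.
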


\begin{remark}
 
Since the discriminant is an invariant it can be expressed as a polynomial in a set of basic invariants. Fixing $\mathcal{F}=\{f_1,...,f_n\}$ a set of basic invariants, $\delta$ is also a polynomial in $\mathcal{F}$. We define a polynomial also called the \textit{discriminant} $\Delta(X_1,...,X_n,\mathcal{F})$ such that $\Delta(f_1,...,f_n,\mathcal{F})= \delta(x_1,...,x_n).$
\end{remark}

\begin{remark}

Since the polynomial $z$ is a relative invariant for $\det$, if $w \in V_i$, then $zw\in V_i \otimes \det$.
\end{remark}
\subsection{Matrix factorizations and Cohen-Macaulay modules}

The main tool used to study $S/(z)$ is Eisenbud's matrix factorization theorem, which formalises the connection between $CM$-modules and matrix factorizations.

Recalling the definition of a $CM$ module:

\begin{definition}
Let $A$ be a Noetherian commutative ring, a finitely generated $A$-module $M$ is called a \textit{Cohen-Macaulay} ($CM$) module if the depth of $M_\mathfrak{p}$ is equal to the Krull dimension of $A_\mathfrak{p}$ for any prime $\mathfrak{p}$ in $\mathrm{Spec}(A)$. A ring $A$ is said to be a $CM$ ring if it is a $CM$-module over itself. 
\end{definition}

In particular the ring of invariants of a complex reflection group, $S^G$ is isomorphic to a polynomial ring and thus is a finitely generated $CM$ ring.  The context for this section is as follows: Let $A$ be a ring of the form $A=B/I$ in which $B$ is a regular local ring and $I$ is a principal ideal generated by an element $f\neq 0$. Let $\mathfrak{C}(A)$ be the category of Cohen-Macaulay modules over the ring $A$.

Let $\phi, \psi$ be $n\times n$ matrices with entries in $B$ such that,
 \[ \psi \cdot \phi = f \cdot1_{B^{n}} \hspace{1cm} \text{and} \hspace{1cm} \phi \cdot \psi = f \cdot1_{B^{n}} \]

\begin{definition} A pair $(\phi,\psi)$ with entries in $B$, which satisfy the above properties is called a \emph{matrix factorization} of $f$. 
\end{definition}To build the category of matrix factorizations of a hypersurface, we require the notation of morphisms between matrix factorizations.

\begin{definition}
Let $(\phi_1,\psi_1)$ and $(\phi_2,\psi_2)$ be matrix factorizations of $f$. Then a morphism between them is a pair of matrices $(\alpha,\beta)$ such that the following diagram commutes:

\begin{center}
    
\begin{tikzcd}
B^{n_1}  \arrow[d,"\alpha"] \arrow[r,"\phi_1"]& B^{n_1} \arrow[d,"\beta"] \arrow[r,"\psi_1"] & \arrow[d,"\alpha"] B^{n_1} \\ B^{n_2} \arrow[r,"\phi_2"] & B^{n_2} \arrow[r,"\psi_2"] & B^{n_2}     
\end{tikzcd}
\end{center}\end{definition} 

Denote by $MF_B(f)$ the category of matrix factorizations. With the following definition of a direct sum $MF_B(f)$ is an additive category.

\begin{definition}
Let $(\phi_1,\psi_1)$ and $(\phi_2,\psi_2)$ be matrix factorizations in $MF_B(f)$ then:

\[(\phi_1,\psi_1) \oplus (\phi_2,\psi_2) = ( \begin{bmatrix} \phi_1 & 0 \\ 0 & \phi_2\end{bmatrix},\begin{bmatrix} \psi_1 & 0 \\ 0 & \psi_2\end{bmatrix}) \] 
\end{definition}
\begin{definition} Two matrix factorizations are \emph{equivalent} if there is a morphism $(\alpha,\beta)$ in which $\alpha,\beta$ are isomorphisms. \end{definition}

\begin{theorem}(Eisenbud's matrix factorization theorem) \cite{EisenbudHOMO} \\
If $A= B/(f)$ is a hypersurface then the functor $Coker(\phi,\psi) = Coker(\phi)$ induces an equivalence of categories $\underline{MF}_B(f):=MF_B(f)/\{(1,f)\} \simeq \mathfrak{C}(A)$. \end{theorem}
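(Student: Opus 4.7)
The plan is to prove the equivalence by establishing three properties of the cokernel functor from $\underline{MF}_B(f)$ to $\mathfrak{C}(A)$: it is well-defined, essentially surjective, and fully faithful.

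First, I would verify that $\cok(\phi)$ is Cohen--Macaulay over $A$ for any matrix factorization $(\phi,\psi)$. Since $B$ is a regular local ring and $f \neq 0$, the relation $\psi\phi = f \cdot \mathrm{id}$ forces $\phi$ to be injective, yielding a short exact sequence
\[
0 \to B^n \xrightarrow{\phi} B^n \to \cok(\phi) \to 0,
\]
a free $B$-resolution of length one. The identity $\phi\psi = f \cdot \mathrm{id}$ shows that $f$ annihilates $\cok(\phi)$, so it inherits an $A$-module structure. By the Auslander--Buchsbaum formula, $\mathrm{depth}_B \cok(\phi) = \dim B - 1 = \dim A$, and depth is invariant under the change of rings $B \to A$, so $\cok(\phi) \in \mathfrak{C}(A)$. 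The trivial factorization $(1,f)$ has cokernel zero, so the functor descends to the quotient $\underline{MF}_B(f)$.

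Second, for essential surjectivity, I would take a CM $A$-module $M$ and regard it as a $B$-module annihilated by $f$. Auslander--Buchsbaum again gives $\mathrm{pd}_B M = 1$, so there is a short exact sequence $0 \to B^m \xrightarrow{\phi} B^n \to M \to 0$; tensoring with the fraction field of $B$ and using that $M$ is $f$-torsion forces $m = n$. Because $f \cdot B^n \subseteq \phi(B^n)$, multiplication by $f$ on $B^n$ lifts through $\phi$ to produce $\psi$ with $\phi\psi = f \cdot \mathrm{id}$. Then $\phi(\psi\phi - f \cdot \mathrm{id}) = (\phi\psi - f \cdot \mathrm{id})\phi = 0$ combined with injectivity of $\phi$ yields $\psi\phi = f \cdot \mathrm{id}$, giving the required matrix factorization.

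Third, for the fully faithful part, I would lift any $A$-linear map $\cok(\phi_1) \to \cok(\phi_2)$ to a morphism of $B$-resolutions using projectivity of $B^{n_1}$; invoking injectivity of $\phi_2$ pins down the second component of the pair $(\alpha,\beta)$ up to a homotopy ambiguity. Faithfulness then reduces to identifying this ambiguity: a morphism inducing the zero map on cokernels is one where $\beta$ factors through $\phi_2$, and careful chasing together with injectivity shows such a morphism is a direct summand of a sum of the trivial factorization $(1,f)$, which is exactly what is killed in the quotient. The main obstacle will be this last bookkeeping step, making precise the quotient $\underline{MF}_B(f) := MF_B(f)/\{(1,f)\}$ and showing that null-homotopic morphisms correspond precisely to morphisms factoring through direct sums of $(1,f)$. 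Once this homotopy-category-style analysis is in place, the well-definedness and essential surjectivity follow cleanly from two applications of Auslander--Buchsbaum.
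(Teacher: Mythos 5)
The paper does not actually prove this statement; it is quoted verbatim from Eisenbud \cite{EisenbudHOMO} (see also Yoshino, Chapter~7) and used as a black box, so there is no in-paper argument to compare yours against. Judged on its own, your outline is the standard proof and its main ingredients are correct: injectivity of $\phi$ from $\psi\phi=f\cdot\mathrm{id}$ over the domain $B$, the Auslander--Buchsbaum count in both directions (noting that a nonzero $\cok(\phi)$ is killed by $f$, hence not free, so $\mathrm{pd}_B=1$ exactly), the rank equality $m=n$ via the fraction field, and the lifting of multiplication by $f$ through $\phi$ to manufacture $\psi$. Two points deserve more care. First, in the fullness step a lift $(\alpha,\beta)$ of a map of cokernels to a map of $B$-free resolutions a priori only satisfies $\beta\phi_1=\phi_2\alpha$; you must also check it intertwines the $\psi$'s, which follows since $(\psi_2\beta-\alpha\psi_1)\phi_1=0$, $\phi_1$ becomes invertible over $\mathrm{Frac}(B)$, and $B^{n_2}$ is torsion-free. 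Second, the step you yourself flag as the main obstacle --- that a morphism inducing zero on cokernels is exactly one factoring through direct sums of $(1,f)$, which is the whole content of passing to $\underline{MF}_B(f)=MF_B(f)/\{(1,f)\}$ --- is left unproved; it is closed by writing $\beta=\phi_2\gamma$ and exhibiting the explicit factorization $(\phi_1,\psi_1)\to(1,f)^{\oplus n_2}\to(\phi_2,\psi_2)$ with components built from $\gamma$ and $\gamma\phi_1$. So your proposal is a correct and standard plan rather than a complete proof, and it is consistent with the source the paper cites.
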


\begin{remark}\label{grade}
Eisenbud defined matrix factorizations for a complete local hypersurface ring. Instead we can alter the definition to graded hypersurface rings. Let $A = \sum_{i=0}^\infty A_i$ be an $\mathbb{N}$- graded $CM$ ring. Define $\mathfrak{grC}(A)$ (resp $\mathfrak{grM}(A)$) to be the category of graded $CM$ modules over $A$ (resp. finitely generated graded $A$ modules) and graded homomorphisms preserving degree (resp. graded $A$-homomorphisms preserving degree). Let $M \in \mathfrak{grM}(A)$, and $n \in \mathbb{Z}$, define $M(n)\in \mathfrak{grM}(A)$ by $M(n)_i=M_{n+i}$. We call the operation $(-)$ degree shifting. Two graded modules $M,N \in \mathfrak{grM}(A)$ are isomorphic up to degree shifting if $M(n) \cong N$ in $\mathfrak{grM}(A)$ for some $n \in \mathbb{Z}$. Chapter 15 of \cite{YujiCM} proves that $\mathfrak{grC}(A)$ is of finite $CM$ type if and only if $CM(\hat{A})$ is. Since the context of the paper is a polynomial ring - a graded $CM$ ring, we will be using this case. For ease of notation, if $A$ is a graded ring, $CM(A)$ will also refer to the graded $CM$ modules. 
\end{remark}

\section{Structure of \(S/(z)\) as an \(R/(\)\(\Delta\)\()\)  module}\label{Structure}

Recall that a complex reflection group $G\subseteq GL(n,\mathbb{C})$  acts on $S=\mathbb{C}[x_1,...,x_n] $. Let $f_1,...,f_n$ be a set of basic invariants under this action on $S$ and $R$ be the invariant ring. Let $(R_+) = (f_1,...,f_n)$ and let $S/(R_+)$ be the coinvariant algebra $S/(f_1,...,f_n)$. During the proof of Theorem \ref{CST} in \cite{Che55} Chevalley shows that as a graded $R$-module $S$ can be decomposed as:
\[ S \cong R \otimes_K S/(R_+),\]

and that as $KG$-modules:
\[S \cong R \otimes_K KG \]

In particular $S$ is a free $R$-module. Since $S$ is a free $R$-module the matrix representing multiplication by $z$ in $S$ will have entries in $R$. Let $d$ (resp $e$) be the degree of $z$ (resp $j$).

As a short exact sequence over $R$, multiplication by $z$ can be expressed as:
\begin{center}
    \begin{tikzcd}
    0 \arrow[r] & S(-d) \arrow[r, "z"] &  S \arrow[r]& S/(z) \arrow[r] & 0 
\end{tikzcd}
\end{center}
\begin{remark}

Since $S$ is a free $R$ modules this can be written as: 
\begin{center}
    \begin{tikzcd}
    0 \arrow[r] & R^{|G|}(-d) \arrow[r, "z"] &  R^{|G|}\arrow[r]& R^{|G|}/(z) \arrow[r] & 0 
\end{tikzcd}
\end{center}
\end{remark}
 
Then using the decomposition into isotypical components:

\begin{center}

\begin{tikzcd}
    0 \arrow[r] & \bigoplus_\chi S^G_\chi(-d) \arrow[r, "z"] &  \bigoplus_\chi S^G_\chi \arrow[r]& (\bigoplus S^G_\chi)/(z)\arrow[r] & 0 
\end{tikzcd}
\end{center}
and so for each $\chi \in \mathrm{irrep}(G)$ we get:
\begin{center}
\begin{tikzcd}
    0 \arrow[r] & S^G_\chi(-d) \arrow[r, "z|_{\chi}"] &   S^G_{\chi \otimes \det} \arrow[r]& ( S^G_{\chi \otimes \det})/(z)\arrow[r] & 0 
\end{tikzcd}
\end{center}
Where $z|_{\chi}$ is the restriction of $z$ on $S^G_\chi$.

Then we consider on which of the different components the restriction map of  $z$ are defined. This gives us, for every $\chi \in$ irrep$(G)$ the following  matrix factorization of $\Delta$: 

\begin{center}
\begin{tikzcd}
     S^G_{\chi\otimes \det}(-d-e) \arrow[r, "j|_{\chi \otimes \det}"] &  S^G_{\chi }(-d) \arrow[r,"z|_{\chi}"]& S^G_{\chi \otimes \det} 
\end{tikzcd}
\end{center}
Denote by $e_\chi$ the dimension of $\chi$, and note that $e_{\chi \otimes \det}=e_{\chi}$, then $S^G_\chi \cong R^{e_\chi^2}$.
\begin{center}
    
\begin{tikzcd}
     R^{e_\chi^2}(-d-e) \arrow[r, "j|_{\chi \otimes \det}"] &  R^{e_\chi^2} (-d) \arrow[r,"z|_\chi"]& R^{e_\chi^2} 
\end{tikzcd}
\end{center}

On the component $S^G_\chi$ let $M_\chi:=$Coker$(z|_\chi)= S^G_{\chi \otimes \det}/z$. Then we have $S/(z)\cong \bigoplus_{\chi \in irrep(G)} M_\chi$. The $M_\chi$ are Cohen-Macaulay over $R/(\Delta)$ since they are Cokernels of matrix factorizations.

Since $(z|_{\chi},j|_{\chi \otimes \det})$ are matrix factorizations for all $\chi \in \mathrm{irrep}(G)$, defining $N_\chi :=$Coker$(j^\chi)= S^G_{\chi \otimes \det^{-1}}/(j)$, we get the exact sequence:
\begin{center}

\begin{tikzcd}
     0 \arrow[r] & \mathrm{Coker}(j|_{\chi \otimes \det},z|_\chi) \arrow[r] & R^{e_\chi^2} \arrow[r] & \mathrm{Coker}(z|_{\chi},j|_{\chi \otimes \det}) \arrow[r] & 0   
\end{tikzcd}
\end{center}

Thus $\mathrm{syz}^1_RM_\chi \cong N_{\chi \otimes \det}$. 

\begin{remark}
By Eisenbud's matrix factorization theorem, the above in terms of matrix factorizations is: We decompose the matrix factorization $(z,j)$ into the matrix factorization $\bigoplus_{\chi \in irrep(G)}(z|_{\chi},j|_{\chi})$ 
\end{remark}
\begin{lemma}
Let $R/(\Delta)$ be of finite $CM$-type, If $S/(z)$ is a representation generator for $R/(\Delta)$ then $S/(j)$ is also a representation generator.
\end{lemma}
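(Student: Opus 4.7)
The plan is to mirror the construction of Section \ref{Structure}, interchanging the roles of $z$ and $j$, and then exploit the duality given by the syzygy functor on a hypersurface. First I would start from the short exact sequence
\[0 \to S(-e) \xrightarrow{j} S \to S/(j) \to 0\]
and repeat the isotypical decomposition argument verbatim. Since $j$ is a relative invariant for $\det^{-1}$, multiplication by $j$ sends $S^G_\chi$ into $S^G_{\chi \otimes \det^{-1}}$, so restricting to each isotypical component gives a short exact sequence whose cokernel is precisely $N_\chi = \mathrm{Coker}(j|_\chi)$. Summing over $\chi$ yields
\[S/(j) \cong \bigoplus_{\chi \in \mathrm{irrep}(G)} N_\chi,\]
entirely parallel to the decomposition $S/(z) \cong \bigoplus_\chi M_\chi$.

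Next I would invoke the relation $\mathrm{syz}^1_{R/(\Delta)} M_\chi \cong N_{\chi \otimes \det}$ already established in Section \ref{Structure}. Over the hypersurface ring $R/(\Delta)$, Eisenbud's matrix factorization theorem implies that $\mathrm{syz}^1$ is an involution on the stable category of $CM$ modules and hence induces a bijection on isomorphism classes of indecomposable non-free $CM$ modules. By assumption $S/(z) \cong \bigoplus_\chi M_\chi$ contains every indecomposable $CM$ module of $R/(\Delta)$ as a summand, so applying $\mathrm{syz}^1$ summand-by-summand and using the bijection, every indecomposable non-free $CM$ module appears as a summand of $\bigoplus_\chi \mathrm{syz}^1 M_\chi = \bigoplus_\chi N_{\chi \otimes \det}$. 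Since $\chi \mapsto \chi \otimes \det$ is a permutation of $\mathrm{irrep}(G)$, reindexing gives $\bigoplus_\chi N_{\chi \otimes \det} \cong \bigoplus_\chi N_\chi \cong S/(j)$.

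To finish I would account for the free module $R/(\Delta)$ itself: $R = S^G_{\mathrm{triv}}$ is a direct $R$-module summand of $S$ (via the isotypical decomposition), and $R \cap (j)S = (\Delta)R$, since if $r = jg \in R$ with $g \in S$, applying any $\sigma \in G$ and using $\sigma(j) = \det(\sigma)^{-1} j$ forces $\sigma(g) = \det(\sigma) g$, so $g \in S^G_{\det} = Rz$ and therefore $r \in (jz)R = (\Delta)R$. Consequently $R/(\Delta)$ is a direct summand of $S/(j)$, and combined with the previous paragraph this shows that every indecomposable $CM$ module of $R/(\Delta)$ appears as a summand of $S/(j)$, i.e.\ $S/(j)$ is a representation generator.

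The step I expect to need the most care is the bookkeeping of free summands and degree shifts: outside the stable category the syzygy functor is only well-defined up to these ambiguities, so one must be sure that the isomorphism $\mathrm{syz}^1 M_\chi \cong N_{\chi \otimes \det}$ really identifies direct summands of $S/(j)$. The isotypical decomposition however produces a canonical matrix factorization for each $\chi$, and the identification is already phrased in a compatible form, so no genuine new calculation is required; conceptually the argument is just that the dual matrix factorization $(j|_{\chi \otimes \det}, z|_\chi)$ is obtained from $(z|_\chi, j|_{\chi \otimes \det})$ by swapping the two maps, which interchanges cokernels and syzygies.
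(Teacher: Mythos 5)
Your proof is correct and follows essentially the same route as the paper: both rest on the relation $\mathrm{syz}^1_R M_\chi \cong N_{\chi\otimes\det}$ together with the two-periodicity of syzygies over the hypersurface $R/(\Delta)$, the only difference being that you apply the syzygy functor to the whole decomposition at once while the paper argues one indecomposable $M$ at a time via $\mathrm{syz}^1\mathrm{syz}^1 M\cong M$. Your extra paragraph verifying that the free summand $R/(\Delta)$ itself occurs in $S/(j)$ (via $R\cap(j)S=(\Delta)$) is a point the paper's proof leaves implicit, since the stable-category argument only controls non-free indecomposables; including it is a genuine, if small, improvement.
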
 
\begin{proof}
Let $M$ be an indecomposable $CM$-module over $R/(\Delta)$. Since $S/(z)$ is a representation generator for $CM(R/(\Delta))$ then since $syz^1_RM$ is $CM$, it is in $\add_{R/(\Delta)}S/(z)$. In particular there exists $\chi \in \mathrm{irrep}(G)$ such that $syz^1_RM \in \add(M_\chi)$. Thus $\mathrm{syz}^1_R \mathrm{syz}^1_R M \cong M \in \add(N_{\chi \otimes \det})$. Hence $M \in \add(S/(j))$ 
\end{proof}

\section{Linear Characters of reflection groups}\label{lin}
Recalling the notation used: $V$ is a $n$-dimensional complex vector space which is endowed with an action of $G \subset GL(V)$, Ref$(G)$ the set of reflections of $G$, $S=S(V)= \mathbb{C}[x_1,...,x_n]$ and $R=S^G$. See \cite{broue} for a more in depth review of the following. Recalling from Section \ref{Disc} that $\alpha_H$ was such that, for $H \in A(G)$, $H=\mathrm{ker}(\alpha_H)$. Let $G(H)$ be the subgroup of $G$ which fixes $H$ and $e_H:= |G(H)|$. Denote by $A/G$ the set of orbits of hyperplanes under $G$, let $\mathfrak{O}$ be an element of $A/G$. Define:
\[j_\mathfrak{O}:=\prod_{H\in \mathfrak{O}}\alpha_H\]

We can then define a linear character $\theta_\mathfrak{O} \in \Hom(G,\mathbb{C}^{\times})$ such that:

\[g(j_\mathfrak{O})=\theta_\mathfrak{O}(g)(j_\mathfrak{O}) \text{ for all g} \in \text{G}\]

$\theta_\mathfrak{O}$ then has the following property: Let $s \in \textrm{Ref}(G)$ then:
\[\theta_\mathfrak{O}(s)= \begin{cases} \det(s) & \text{ if s $\in G(H)$ for some $H \in \mathfrak{O}$} \\ 1 & \text{ otherwise} \end{cases}\]

Note, $\det(s)$ is the determinant of the action of $s$ on the $V$. Thus if we take $z:=\prod_{H \in \mathcal{A}} \alpha_H$ we get $g(z)=\det(g)(z)$, since $G$ is generated by reflections.
To describe linear characters it is enough to restrict to the fixer groups of the hyperplanes. 

\begin{theorem}\cite[Theorem 4.12]{broue} 
Let $\theta \in \Hom(G,\mathbb{C}^{\times})$ and for a hyperplane $H$, denote by $\mathfrak{O}$ the orbit of $H$. Then there is a unique integer $m_{\mathfrak{O}}(\theta)$ such that:

\[\text{\emph{Res}}^G_{G(H)} \theta=\text{\emph{det}}^{m_\mathfrak{O}(\theta)}  \text{  with the condition } 0\leq m_\mathfrak{O}(\theta) <|G(H)|.\]
$\text{i.e for all g} \in \text{G, } \text{\emph{Res}}^G_{G(H)} \theta(g)=\det^{m_\mathfrak{O}(\theta)}(g)$.

Then set $j_\theta:= \prod_{\mathfrak{O} \in \mathcal{A}/G} j_{\mathfrak{O}}$, then $S^G_\theta=Rj_\theta$. We call $j_\theta$  the relative invariant of $\theta$.
\end{theorem}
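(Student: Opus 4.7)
The plan is to establish existence and uniqueness of the exponents $m_{\mathfrak{O}}(\theta)$, then verify that $j_\theta := \prod_{\mathfrak{O} \in \mathcal{A}/G} j_\mathfrak{O}^{m_\mathfrak{O}(\theta)}$ has $\theta$ as its character, and finally prove that every relative invariant of $\theta$ is an $R$-multiple of $j_\theta$ by a local divisibility argument at each reflecting hyperplane.

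First, for the existence and uniqueness of $m_\mathfrak{O}(\theta)$, I would use that $G(H)$ is cyclic of order $e_H$, generated by a reflection $s_H$ whose unique non-trivial eigenvalue on $V$ is a primitive $e_H$-th root of unity $\zeta_H$. Hence $\det(s_H)=\zeta_H$, and $\det|_{G(H)}$ generates the character group $\widehat{G(H)} \cong \mathbb{Z}/e_H\mathbb{Z}$. Any one-dimensional character of $G(H)$ is therefore uniquely of the form $\det^k|_{G(H)}$ with $0 \le k < e_H$; applied to $\mathrm{Res}^G_{G(H)}\theta$ this yields a unique $m_\mathfrak{O}(\theta) \in \{0,\ldots,e_H-1\}$. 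To see that the value does not depend on the choice of $H \in \mathfrak{O}$, note that if $H' = gH$ then $s_{H'} = g s_H g^{-1}$ and $\det$ is a class function on $G$, so the restricted characters on $G(H)$ and $G(H')$ are identified via conjugation by $g$.

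Next, to show that $j_\theta$ is a relative invariant with character $\theta$, it suffices, because $G$ is generated by reflections, to verify $s \cdot j_\theta = \theta(s) j_\theta$ for each $s \in \mathrm{Ref}(G)$. By the property of $\theta_\mathfrak{O}$ recalled just before the theorem, $s(j_\mathfrak{O}) = \theta_\mathfrak{O}(s) j_\mathfrak{O}$, where $\theta_\mathfrak{O}(s) = \det(s)$ if $s \in G(H)$ for some $H \in \mathfrak{O}$ and $\theta_\mathfrak{O}(s) = 1$ otherwise. For $s \in G(H_0)$ with $H_0 \in \mathfrak{O}_0$ this gives $s(j_\theta) = \prod_\mathfrak{O} \theta_\mathfrak{O}(s)^{m_\mathfrak{O}(\theta)} j_\mathfrak{O}^{m_\mathfrak{O}(\theta)} = \det(s)^{m_{\mathfrak{O}_0}(\theta)} j_\theta$, which equals $\theta(s) j_\theta$ by the defining property of $m_{\mathfrak{O}_0}(\theta)$.

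Finally, to prove $S^G_\theta = R \cdot j_\theta$, the inclusion $R j_\theta \subseteq S^G_\theta$ is immediate from the previous paragraph. For the converse, let $f \in S^G_\theta$ and fix a hyperplane $H$ with generator $s_H$ of $G(H)$. In a coordinate system adapted to $H$, so that $s_H$ sends $\alpha_H \mapsto \zeta_H \alpha_H$ and fixes a complementary set of coordinates, write $f = \sum_{k \ge 0} \alpha_H^k g_k$ with each $g_k$ fixed by $s_H$. The identity $s_H \cdot f = \zeta_H^{m_\mathfrak{O}(\theta)} f$ forces $g_k = 0$ whenever $k \not\equiv m_\mathfrak{O}(\theta) \pmod{e_H}$; in particular $\alpha_H^{m_\mathfrak{O}(\theta)}$ divides $f$. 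Running this argument at every hyperplane and using unique factorization in $S$ together with the coprimality of defining forms for distinct hyperplanes, we conclude that $j_\theta$ divides $f$; the quotient $f/j_\theta$ is then polynomial and fixed by every reflection, hence lies in $R$. The step that takes the most care is this last assembly of local divisibilities into global divisibility by $j_\theta$, which relies on the fact that $S$ is a UFD and that $\alpha_H$ and $\alpha_{H'}$ are associate only when $H = H'$.
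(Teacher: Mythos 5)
Your proof is correct and follows the same route the paper takes, which is simply to observe that $\det$ generates $\Hom(G(H),\mathbb{C}^{\times})$ for the cyclic group $G(H)$ (giving the unique exponent $m_\mathfrak{O}(\theta)$) and to defer the claim $S^G_\theta = Rj_\theta$ to the standard divisibility argument in Orlik--Terao; your write-up just supplies in full the details that the paper cites. Note that you have also silently, and correctly, repaired a typo in the statement: the relative invariant must be $j_\theta=\prod_{\mathfrak{O}} j_{\mathfrak{O}}^{m_\mathfrak{O}(\theta)}$ rather than $\prod_{\mathfrak{O}} j_{\mathfrak{O}}$, as otherwise $j_\theta$ would not depend on $\theta$ and the equality $S^G_\theta=Rj_\theta$ would fail.
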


\begin{proof}
The proof is essentially the fact the $\det$ generates $\Hom(G(H),\mathbb{C}^*)$. The proof for  $S^G_\theta=Rj_\theta$ can be found in \cite[6.37]{Arrangments}.   
\end{proof}

Consider $\Delta_\mathfrak{O}:=j_\mathfrak{O}^{|G(H)|}=\prod_{H\in \mathfrak{O}}\alpha_H^{|G(H)|}$.

\begin{lemma}
$\Delta_\mathfrak{O}$ is invariant under the action of $G$.
\end{lemma}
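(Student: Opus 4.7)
The plan is to leverage the relative invariance of $j_\mathfrak{O}$. By the preceding discussion, there is a linear character $\theta_\mathfrak{O} \in \mathrm{Hom}(G,\mathbb{C}^\times)$ characterised by $g(j_\mathfrak{O}) = \theta_\mathfrak{O}(g)\, j_\mathfrak{O}$ for every $g \in G$. Raising to the $|G(H)|$-th power gives
\[
g(\Delta_\mathfrak{O}) \;=\; g(j_\mathfrak{O})^{|G(H)|} \;=\; \theta_\mathfrak{O}(g)^{|G(H)|}\, \Delta_\mathfrak{O},
\]
so the proof reduces to showing $\theta_\mathfrak{O}(g)^{|G(H)|} = 1$ for every $g \in G$.

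First, I would observe that $|G(H)|$ depends only on the orbit $\mathfrak{O}$: if $H' = gH \in \mathfrak{O}$, then $G(H') = g G(H) g^{-1}$, so the two pointwise stabilisers are conjugate and in particular share the common order $e_H$. Since $G$ is generated by reflections, it suffices to check the equality $\theta_\mathfrak{O}(g)^{e_H}=1$ on a generating set of reflections. For a reflection $s \in \mathrm{Ref}(G)$, the description recalled from \cite[Theorem 4.12]{broue} gives
\[
\theta_\mathfrak{O}(s) \;=\; \begin{cases} \det(s) & \text{if } s \in G(H') \text{ for some } H' \in \mathfrak{O},\\ 1 & \text{otherwise.} \end{cases}
\]
In the second case the claim is immediate. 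In the first case $s$ lies in the cyclic group $G(H')$ of order $e_H$, so $s$ has order dividing $e_H$, and hence $\det(s)$ is an $e_H$-th root of unity. Thus $\theta_\mathfrak{O}(s)^{e_H} = 1$ in either case, and multiplicativity of $\theta_\mathfrak{O}$ propagates this to every $g \in G$.

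Combining the two steps yields $g(\Delta_\mathfrak{O}) = \Delta_\mathfrak{O}$ for every $g \in G$, i.e. $\Delta_\mathfrak{O} \in R$. The only genuinely non-cosmetic point is the observation that all hyperplanes in a single $G$-orbit share the same stabiliser order; once that is in place, everything else is just the fact that $\theta_\mathfrak{O}$ takes values in $e_H$-th roots of unity, which is already encoded in the restriction formula for $\theta_\mathfrak{O}$.
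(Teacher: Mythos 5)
Your proof is correct and is essentially the argument the paper intends: the paper's own proof simply says ``this follows from the above discussion,'' and what you have written out --- $g(\Delta_\mathfrak{O})=\theta_\mathfrak{O}(g)^{e_H}\Delta_\mathfrak{O}$ together with the check that $\theta_\mathfrak{O}^{e_H}$ is trivial on the generating reflections via the restriction formula for $\theta_\mathfrak{O}$ --- is precisely that discussion made explicit. Your added observation that stabiliser orders are constant along a $G$-orbit (by conjugacy of the $G(H)$) is a worthwhile detail the paper leaves implicit.
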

\begin{proof}
This follows from the above discussion.
\end{proof}
Note that $\prod_{\mathfrak{O} \in A/G}\Delta_{\mathfrak{O}} = \Delta$. The following shows which isotypical components, $\Delta_{\mathfrak{O}}$ is obtained from. Note that the action $-\otimes \det$ sends linear characters to linear characters. Let $\theta \in \text{Hom}(G,\mathbb{C^*})$ then $m_\mathfrak{O}(\theta\otimes \det) =m_\mathfrak{O}(\theta)+1 \text{ mod } e_H$. This is easily seen since $m_\mathfrak{O}(\det)=1$ for all orbits $\mathfrak{O}$. Since we are interested in viewing $(z,j)$ as a matrix factorization of the discriminant, we can use this to find the parts of the matrix factorization on the isotypcial components for the linear characters. Recall that $z|_\theta$ denotes the map restricted on the isotypical component of type $\theta$ of $S$.

\[\begin{tikzcd} S^G_\theta \arrow[r,"z|_{\theta}"] & S^G_{\theta \otimes \det}  .
\end{tikzcd}\]
Which, since $\theta \in \text{Hom}(G,\mathbb{C}^{\times})$, is:

\[\begin{tikzcd} Rj_\theta \arrow[r,"z|_{\theta}"] & Rj_{\theta \otimes \det} .
\end{tikzcd}\]
Thus we need to calculate $zj_\theta$.

\begin{lemma}\label{comp}
The cokernel of the map $z|_{\theta_\mathfrak{O}^{e_H-1}}$ is $R/{\Delta_\mathfrak{O}}$.
\end{lemma}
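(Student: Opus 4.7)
The plan is to use the theorem recalled just before the lemma (Br\'oué's description of linear characters) which says that each isotypical component $S^G_\theta$ for a linear character $\theta$ is a rank-$1$ free $R$-module generated by the relative invariant $j_\theta = \prod_{\mathfrak{O}' \in A/G} j_{\mathfrak{O}'}^{m_{\mathfrak{O}'}(\theta)}$. So the map
\[
z|_{\theta_\mathfrak{O}^{e_H-1}} \colon S^G_{\theta_\mathfrak{O}^{e_H-1}} \longrightarrow S^G_{\theta_\mathfrak{O}^{e_H-1}\otimes \det}
\]
is, after identifying source and target with $R$ via their respective generators, multiplication by a single element of $R$. The whole argument reduces to identifying that element.

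First I would compute $m_{\mathfrak{O}'}(\theta_\mathfrak{O}^{e_H-1})$ for every orbit $\mathfrak{O}'$. By the formula displayed just after the definition of $\theta_\mathfrak{O}$, restriction of $\theta_\mathfrak{O}$ to $G(H')$ equals $\det$ when $H' \in \mathfrak{O}$ and is trivial otherwise; so $m_\mathfrak{O}(\theta_\mathfrak{O}) = 1$ and $m_{\mathfrak{O}'}(\theta_\mathfrak{O}) = 0$ when $\mathfrak{O}'\neq \mathfrak{O}$. Raising to the $(e_H-1)$-th power gives $m_\mathfrak{O}(\theta_\mathfrak{O}^{e_H-1}) = e_H-1$ and $m_{\mathfrak{O}'}(\theta_\mathfrak{O}^{e_H-1}) = 0$ otherwise, hence $j_{\theta_\mathfrak{O}^{e_H-1}} = j_\mathfrak{O}^{\,e_H-1}$. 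Using the identity $m_{\mathfrak{O}'}(\theta \otimes \det) \equiv m_{\mathfrak{O}'}(\theta)+1 \pmod{e_{H'}}$ recorded in the text, I get $m_\mathfrak{O}(\theta_\mathfrak{O}^{e_H-1}\otimes \det) = 0$ and $m_{\mathfrak{O}'}(\theta_\mathfrak{O}^{e_H-1}\otimes \det) = 1$ for $\mathfrak{O}'\neq\mathfrak{O}$, so
\[
j_{\theta_\mathfrak{O}^{e_H-1}\otimes \det} \;=\; \prod_{\mathfrak{O}'\neq \mathfrak{O}} j_{\mathfrak{O}'}.
\]

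Next I would compute the map on generators. Writing $z=\prod_{\mathfrak{O}'}j_{\mathfrak{O}'}$ (the factorization of $z$ into orbit pieces), I get
\[
z \cdot j_\mathfrak{O}^{\,e_H-1} \;=\; j_\mathfrak{O}^{\,e_H}\cdot\prod_{\mathfrak{O}'\neq \mathfrak{O}}j_{\mathfrak{O}'} \;=\; \Delta_\mathfrak{O}\cdot j_{\theta_\mathfrak{O}^{e_H-1}\otimes\det},
\]
since $\Delta_\mathfrak{O}=j_\mathfrak{O}^{\,e_H}$ by definition. Under the trivializations of source and target as free $R$-modules of rank $1$, the map $z|_{\theta_\mathfrak{O}^{e_H-1}}$ therefore becomes multiplication by $\Delta_\mathfrak{O}$, and the cokernel is $R/(\Delta_\mathfrak{O})$, as claimed.

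The only genuine point to be careful about is the reduction modulo $e_H$ in the exponents of the twisted character: the particular power $e_H-1$ is exactly what makes $m_\mathfrak{O}$ of the twist become zero, so that $j_\mathfrak{O}$ drops out of the target generator and all factors of $j_\mathfrak{O}$ in $z\cdot j_\mathfrak{O}^{\,e_H-1}$ collect into $\Delta_\mathfrak{O}$. This is the one step where choosing a different power of $\theta_\mathfrak{O}$ would yield something other than $\Delta_\mathfrak{O}$; otherwise the proof is a bookkeeping calculation using the formulas from the preceding theorem.
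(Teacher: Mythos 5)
Your proof is correct and follows essentially the same route as the paper: identify the relative invariants $j_{\theta_\mathfrak{O}^{e_H-1}}=j_\mathfrak{O}^{e_H-1}$ and $j_{\theta_\mathfrak{O}^{e_H-1}\otimes\det}=\prod_{\mathfrak{O}'\neq\mathfrak{O}}j_{\mathfrak{O}'}$, then observe that $z\cdot j_\mathfrak{O}^{e_H-1}=\Delta_\mathfrak{O}\cdot j_{\theta_\mathfrak{O}^{e_H-1}\otimes\det}$, so the map is multiplication by $\Delta_\mathfrak{O}$ between rank-one free $R$-modules. Your explicit bookkeeping of the integers $m_{\mathfrak{O}'}(\theta)$ is slightly more detailed than the paper's proof, which simply states the two relative invariants, but the argument is the same.
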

\begin{proof}
The relative invariants of $\theta^{e_H-1}_\mathfrak{O}$ and $\theta^{e_H-1}_\mathfrak{O} \otimes \det$ are given by:

\[j_{\theta_\mathfrak{O}^{e_H-1}}= j_\mathfrak{O}^{e_H-1}=\prod_{H \in \mathfrak{O}}\alpha_H^{e_H-1} \text{ and } j_{\theta_\mathfrak{O}^{e_H-1} \otimes \det} =\prod_{\mathfrak{q} \in (\mathcal{A}/G) \setminus \{\mathfrak{O}\}}j_\mathfrak{q}= j_{\theta \otimes \det}.\]
i.e $j_{\theta_\mathfrak{O}^{e_H-1} \otimes \det}$ contains all the hyperplanes that are not in the orbit $\mathfrak{O}$.

 Thus,

\[zj_{\theta_\mathfrak{O}^{e_H-1}}=j_\mathfrak{O}^{e_H}\prod_{\mathfrak{q} \in (\mathcal{A}/G) \setminus \mathfrak{O}}\alpha_{\mathfrak{q}}=\Delta_\mathfrak{O}j_{\theta \otimes \det}. \]

\end{proof}

\begin{theorem}\label{comp2}
For all orbits $\mathfrak{O}$, $R/(\Delta_\mathfrak{O})$ is a direct summand of $S/(z)$ as a $R/(\Delta)$ module.
\end{theorem}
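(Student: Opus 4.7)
The plan is to deduce this directly from the isotypical decomposition machinery set up in Section \ref{Structure}, combined with Lemma \ref{comp}. Recall from Section \ref{Structure} that the matrix factorization $(z,j)$ of $\Delta$ splits, using the isotypical decomposition of $S$ as a $\mathbb{C}G$-module, as
\[
S/(z) \;\cong\; \bigoplus_{\chi \in \mathrm{irrep}(G)} M_\chi, \qquad M_\chi := \mathrm{Coker}(z|_\chi),
\]
and this is an isomorphism of $R/(\Delta)$-modules because each $z|_\chi$ is $R$-linear and $z \cdot S \subset S$ respects the isotypical decomposition (multiplication by $z$ sends the $\chi$-component into the $\chi\otimes\det$-component).

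Next I would observe that for each orbit $\mathfrak{O} \in A/G$, the character $\theta_\mathfrak{O}^{e_H-1}$ is a genuine one-dimensional character of $G$, hence it occurs as one of the $\chi$ indexing the decomposition above. Consequently $M_{\theta_\mathfrak{O}^{e_H-1}}$ is a direct summand of $S/(z)$.

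Finally, Lemma \ref{comp} identifies exactly this summand: it computes $\mathrm{Coker}(z|_{\theta_\mathfrak{O}^{e_H-1}}) \cong R/(\Delta_\mathfrak{O})$, by showing that the relative invariants $j_{\theta_\mathfrak{O}^{e_H-1}}$ and $j_{\theta_\mathfrak{O}^{e_H-1}\otimes\det}$ give rank-one free $R$-modules on which multiplication by $z$ corresponds to multiplication by $\Delta_\mathfrak{O}$. Combining these two facts yields $R/(\Delta_\mathfrak{O})$ as an $R/(\Delta)$-direct summand of $S/(z)$.

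The only subtle point, which I would spell out carefully, is that the decomposition into $M_\chi$ is really a decomposition of $R/(\Delta)$-modules (not merely of $R$-modules): this is where it matters that the source and target components shift by $\det$, so that the restriction $z|_\chi : S^G_\chi(-d) \to S^G_{\chi\otimes\det}$ is a map of $R$-modules whose cokernel is naturally an $R/(\Delta)$-module via the matrix factorization $(z|_\chi, j|_{\chi\otimes\det})$. Once this functoriality is noted, the theorem is an immediate corollary of Lemma \ref{comp}.
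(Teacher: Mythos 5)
Your proposal is correct and follows essentially the same route as the paper: the paper's proof likewise combines the isotypical direct sum decomposition of $S/(z)$ into the cokernels $M_\chi$ from Section \ref{Structure} with the computation in Lemma \ref{comp} identifying $M_{\theta_\mathfrak{O}^{e_H-1}}\cong R/(\Delta_\mathfrak{O})$. Your extra remark on why the summands carry a natural $R/(\Delta)$-module structure via the matrix factorization $(z|_\chi, j|_{\chi\otimes\det})$ is a point the paper leaves implicit, but it is consistent with the discussion preceding the theorem.
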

\begin{proof}
Since $S\cong \bigoplus_{\chi \in Irr(G)}S^G_\chi$ and $(z,j)$ is a matrix factorization for $\Delta$, the discussion above shows the map $z$ on the Isotypical component $S^G_{\theta_\mathfrak{O}^{e_H-1}}$ gives the component $\Delta_\mathfrak{O}$.
\end{proof}

\section{\(G(m,p,n)\)}
 The representation theory of the group $G(m,p,n)$ is an extension of the representation theory of the symmetric group. In particular we can calculate a basis for the isotypical components of the coinvariant algebra, which in turn allows calculation of $M_\chi$ from Section \ref{Structure}. 
 
\begin{definition}
The abstract group $G(m,1,n)$, with $m,n \geq 1$, is identified with the wreath product $(\mathbb{Z}/m\mathbb{Z})\wr S_n$ and $G(m,p,n)$ are normal subgroups of $G(m,1,n)$, where $p$ must divide $m$ and define $q:=\frac{m}{p}$.
\end{definition}

As a complex reflection group $G(m,p,n)$ can be described as follows: Let $\xi_m$ be a $m$-th root of unity then $G(m,p,n)$ is the group of matrices of the form $PD$ where $P$ is a permutation matrix and $D$ is a diagonal matrix which entries are $\xi_m$ and $\det(D)^{\frac{m}{p}}=1$. Generators and relations can be found in \cite{ariki1994hecke}. There are $n$ generators, which we denote $s_1,...,s_n$.

\begin{example}

For the group $G(m,1,2)$ acting on $\mathbb{C}[x,y]$ the invariants are calculated to be:  $\sigma_1=(xy)^m$ and $\sigma_2=x^m+y^m$, see for example \cite{bannai1976}. The defining equations of the hyperplanes of $G(m,1,2)$ are \cite{Arrangments} Section 6.4:

\[x,y,(x-\xi_m^iy) \hspace{1cm} \text{for  $i \in \{0,1,...,m-1\}$ }\]

Thus the hyperplane arrangement for the group $G(m,1,2)$ is:
\[z=xy(x^m-y^m).\]
The generators of the group $G(m,1,2)$ are 
\[s_1=\begin{bmatrix}\xi_m & 0 \\ 0 & 1 \end{bmatrix}, s_2=\begin{bmatrix}0 &1 \\ 1 & 0 \end{bmatrix}.\]
The hyperplanes $H_x:=\ker(x)$ and $H_y:=\ker(y)$ are in the same $G$-orbit, for example by applying $s_2$. The other hyperplanes are in another distinct orbit. The jacobian and discriminant are given as: 
\[j=(xy)^{m-1}(x^m-y^m),\]
\[\Delta={(xy)}^m(x^m-y^m)^2=\sigma_1(\sigma_2^2-4\sigma_1).\]

\end{example}

\begin{example}

For the subgroups $G(m,p,2)$, $p\neq m$ the invariants are $\sigma_1=(xy)^q$ and $\sigma_2=x^m-y^m$ with:

\[z=xy(x^m-y^m),\]
\[j=(xy)^{q-1}(x^m-y^m),\]
\[\Delta={(xy)}^q(x^m-y^m)^2=\sigma_1(\sigma_2^2-4\sigma_1^p).\]
\end{example}

\begin{example}

When $p=m$ the invariants are $\sigma_1=xy$ and $\sigma_2=x^m+y^m$ with:

\[z=(x^m-y^m)\]
\[j=(x^m-y^m)\]
\[\Delta=(x^m-y^m)^2=\sigma_2^2-4\sigma_1^m.\]
\end{example}
In these examples the invariant ring is $R=\mathbb{C}[\sigma_1,\sigma_2]$. Using Section \ref{lin} we can calculate the modules corresponding to the isotypical components of the linear characters. $G(m,1,2)$ has $2$ orbits of hyperplanes. The hyperplanes given by $x, y$ are in the same orbit, call this $\mathfrak{O}$. The rest of the hyperplanes are in the other, $\mathfrak{q}$. Then, $\Delta_\mathfrak{O}= (xy)^m$ and $\Delta_\mathfrak{q}=(x^m-y^m)^2$. Since these are invariant, they can be expressed in terms of the basis invariants $\sigma_1,\sigma_2$. $\Delta_\mathfrak{O}=\sigma_1$ and $\Delta_\mathfrak{q}=\sigma_2^2-4\sigma_1$. From Lemma \ref{comp} and Theorem \ref{comp2}, $M_{\theta_{\mathfrak{O}}^{e_{H_x}-1}}= R/(\sigma_1)$ and $M_{\theta_{\mathfrak{q}}^{e_{H_{x-y}}-1}}= R/(\sigma_2^2-4\sigma_1)$. 

The subgroups $G(m,p,2) \subseteq G(m,1,2)$ fall into $2$ cases. When $p$ is odd, it is similar to the above, there are 2 orbits $\mathfrak{O}$ and $\mathfrak{q}$ where $\Delta_\mathfrak{O}=(xy)^q =\sigma_1$ and $\Delta_\mathfrak{q}=(x^m-y^m)^2=x^{2m}+y^{2m}-2x^my^m=\sigma_2^2-4\sigma_1^p$. When $p$ is even there are $3$ orbits $\mathfrak{O}_1,\mathfrak{O}_2,\mathfrak{O}_3$, where $\Delta_{\mathfrak{O}_1}= \sigma_1$, $\Delta_{\mathfrak{O}_2}= \sigma_2-2\sigma_1^{\frac{p}{2}}$,$\Delta_{\mathfrak{O}_3}= \sigma_2+2\sigma_1^{\frac{p}{2}}$. With the notation from Section \ref{lin} we can combine the different orbits and show

\[M_{\theta_\mathfrak{O_i}^{e_{H_\mathfrak{O_i}}-1}\otimes\theta_\mathfrak{O_j}^{e_{H_\mathfrak{p_j}}-1}}=R/(\Delta_\mathfrak{O_i}\Delta_\mathfrak{O_j})\] for $1\leq i\neq j \leq 3$, is a direct summand of $S/(z)$ over $R/(\Delta)$.

\subsection{Representation Theory of \(G(m,1,n)\)}

Since the groups $G(m,1,n)$ are extensions of the symmetric group, their representation theory looks similar. We quickly introduce Young diagrams and describe how they can be used to build the representation theory of $G(m,1,n)$. A standard text for Young Tableau is \cite{fulton_1996}.

Consider $n \in \mathbb{N}\setminus \{0\}$. Let $\lambda$ be a partition of $n$, i.e $\lambda =(\lambda_1,...,\lambda_k)$, such that $k\leq n$, $0 < \lambda_{i+1}\leq \lambda_i$ and $\sum_{i}n_i=n$. 
A partition can also be represented as a Young diagram, which is constructed in the following way: Given a partition $\lambda$ of $n$, the Young diagram associated to $\lambda$ is a collection of left justified rows of squares called cells. Enumerate the rows from $0$ to $k-1$, top to bottom, the number of cells in row $i$ is $\lambda_{i}$. The partitions uniquely determine the Young diagram so we use the same notation $\lambda$ for the partition and the Young diagram. We call a Young diagram associated to a partition of $n$, a Young diagram of size $n$

\begin{example}
Let $n=5$ and $\lambda=(2,2,1)$ then the Young diagram is: 

\[\yng(2,2,1)\]
\end{example}

\begin{definition}
A \textit{Young tableau} is a Young diagram of size $n$, where each cell contains a number from $1$ to $n$ such that each number $1$ to $n$ appears only once. A Young tableau is called \textit{standard} the sequence of entries in the rows and columns are strictly increasing.
\end{definition}

\begin{example}
Let $n=5$ and consider the Young diagram $\lambda$ from the previous example. Then the following are Young tableau:

\[\young(12,34,5) \hspace{1cm} \young(13,24,5)\]

These are also both standard tableau.
\end{example}
It is widely known that Young diagrams of size $n$ are in bijection with the irreducible representation of the symmetric group on $n$ letters, $S_n$, see \cite{fulton_1996}. The representation theory of the symmetric group can be extended to the representation theory of $G(m,1,n)$ by instead considering $m$-tuples of Young diagrams.

\begin{definition}
Let $m,n \in \mathbb{Z^+}$.

\begin{enumerate}
    \item[i)] Let $\mathcal{P}_{m,n}$ the set of all $m$-tuple of Young diagrams $\lambda=(\lambda^{(0)},...,\lambda^{(m-1)})$, where $\lambda^{(i)}$ is a partition of $n_i$ for $0 \leq n_i \leq n$ for $0 \leq i \leq m-1$ and such that $\sum_{0 \leq i \leq m-1}n_i = n$.  
\end{enumerate}
Let $\lambda \in \mathcal{P}_{m,n}$.
\begin{enumerate}

    \item[ii)] An \textit{$m$-tuple of Young tableau of shape $\lambda$} is an $m$-tuple of Young diagrams with the numbers $1$ to $n$ enumerating the cells.
    \item[iii)] An $m$-tuple of Young tableau of shape $\lambda$ is called \textit{standard} if the sequence of entries in the rows and columns of each $\lambda^{(i)}$ are strictly increasing. The set of all standard $m$-tuples of Young tableau of shape $\lambda$ is denoted $\stab(\lambda)$
\end{enumerate}
\end{definition}

\begin{example}
Let $m= 3$ and $n = 5$ then $\lambda= ({\Yvcentermath1\yng(1,1)},-,{\Yvcentermath1\yng(2,1)})$ is an element of $\mathcal{P}_{m,n}$ and $\lambda_1= ({\Yvcentermath1\young(2,5)},-,{\Yvcentermath1\young(14,3)})$ is a standard $m$-tuple of Young tableau of shape $\lambda$.
\end{example}

\begin{lemma} 
Every irreducible representation of $G(m,1,n)$ corresponds to an element of $\mathcal{P}_{m,n}$. 
\end{lemma}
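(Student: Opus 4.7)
The plan is to exploit the semidirect product structure $G(m,1,n)=(\mathbb{Z}/m\mathbb{Z})^n\rtimes S_n$ and apply Clifford theory to the normal abelian subgroup $N:=(\mathbb{Z}/m\mathbb{Z})^n$. Since $\mathbb{Z}/m\mathbb{Z}$ is cyclic of order $m$, its irreducible characters are the one-dimensional $\chi_0,\dots,\chi_{m-1}$, so the irreducible characters of $N$ are exactly the tensor products $\chi_{\mathbf{i}}:=\chi_{i_1}\otimes\cdots\otimes\chi_{i_n}$ indexed by $\mathbf{i}=(i_1,\dots,i_n)\in\{0,\dots,m-1\}^n$. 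The factor $S_n$ acts on $\widehat{N}$ by permuting tensor slots, i.e.\ $\sigma\cdot\chi_{\mathbf{i}}=\chi_{\sigma(\mathbf{i})}$.

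Next I would classify the $S_n$-orbits on $\widehat{N}$ and compute stabilizers. An orbit is determined by the multiset of characters appearing in $\mathbf{i}$, equivalently by the vector $(n_0,\dots,n_{m-1})$ with $n_k=\#\{j:i_j=k\}$ and $\sum_k n_k=n$. Taking the distinguished representative $\chi_{\mathbf{i}}$ in which the indices appear in weakly increasing order, its stabilizer in $S_n$ is the Young subgroup $S_{\mathbf{n}}:=S_{n_0}\times\cdots\times S_{n_{m-1}}$, and the stabilizer in the full group is the subgroup $H_{\mathbf{n}}:=N\rtimes S_{\mathbf{n}}$.

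The character $\chi_{\mathbf{i}}$ extends canonically to a one-dimensional representation $\widetilde{\chi}_{\mathbf{i}}$ of $H_{\mathbf{n}}$ by letting $S_{\mathbf{n}}$ act trivially; this works precisely because $S_{\mathbf{n}}$ is the stabilizer and the semidirect product splits. For each irreducible representation $\rho$ of $S_{\mathbf{n}}$, pulled back along $H_{\mathbf{n}}\twoheadrightarrow S_{\mathbf{n}}$, the tensor product $\widetilde{\chi}_{\mathbf{i}}\otimes\rho$ is an irreducible representation of $H_{\mathbf{n}}$ which restricts to $\chi_{\mathbf{i}}$-isotypic on $N$. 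By Clifford's theorem, induction $\ind^{G(m,1,n)}_{H_{\mathbf{n}}}(\widetilde{\chi}_{\mathbf{i}}\otimes\rho)$ is irreducible, and every irreducible of $G(m,1,n)$ arises this way, uniquely up to the choice of orbit representative and isomorphism class of $\rho$. Since the irreducibles of $S_{\mathbf{n}}=\prod_k S_{n_k}$ are tensor products $\bigotimes_{k=0}^{m-1}\rho_{\lambda^{(k)}}$ indexed by tuples of partitions $\lambda^{(k)}\vdash n_k$, the data of an orbit together with an irreducible of its stabilizer is exactly an element $\lambda=(\lambda^{(0)},\dots,\lambda^{(m-1)})\in\mathcal{P}_{m,n}$.

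To finish, I would verify the correspondence is a bijection, for instance by checking that $\sum_{\lambda\in\mathcal{P}_{m,n}}(\dim V_\lambda)^2=m^n n!=|G(m,1,n)|$, using $\dim V_\lambda=\binom{n}{n_0,\dots,n_{m-1}}\prod_k\dim\rho_{\lambda^{(k)}}$ together with the classical identity $\sum_{\lambda\vdash n_k}(\dim\rho_{\lambda^{(k)}})^2=n_k!$. The main subtlety is the Clifford-theoretic step, specifically ensuring that $\chi_{\mathbf{i}}$ admits an extension to its full stabilizer $H_{\mathbf{n}}$ in the wreath product; this is where the semidirect (not merely extension) structure of $G(m,1,n)$ is essential, because in a general group extension the analogous extension question is governed by a cohomology class that need not vanish. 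The remaining verifications that induction preserves irreducibility and distinguishes non-conjugate pairs $(\chi_{\mathbf{i}},\rho)$ are standard consequences of Clifford theory for normal subgroups.
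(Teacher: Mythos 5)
Your argument is correct, but it is worth noting that the paper does not actually prove this lemma: it is stated as a known fact, with the construction attributed to Ariki--Koike, and what the paper then supplies is the \emph{explicit model} of each irreducible --- the vector space spanned by standard $m$-tuples of tableaux of shape $\lambda$ together with formulas for the action of the generators $s_1,\dots,s_n$. Your route is the standard abstract one: the little-groups method (Clifford--Mackey theory) applied to the normal abelian subgroup $(\mathbb{Z}/m\mathbb{Z})^n$, classifying $S_n$-orbits of characters by multiplicity vectors $(n_0,\dots,n_{m-1})$, extending the sorted representative to its inertia group $N\rtimes S_{\mathbf{n}}$ (which works exactly as you say, because the extension splits and the stabilizer permutes equal tensor factors), tensoring with an irreducible of the Young subgroup $\prod_k S_{n_k}$, and inducing; the final parametrization by tuples of partitions $\lambda^{(k)}\vdash n_k$ is precisely $\mathcal{P}_{m,n}$, and your dimension count $\sum_{\mathbf{n}}\binom{n}{\mathbf{n}}^2\prod_k n_k!=m^n\,n!$ correctly confirms completeness. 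What your approach buys is a self-contained proof of the bijection; what it does not produce is the concrete tableau basis and the explicit matrices for $s_1,\dots,s_n$ that the paper needs downstream (to identify $\det$, to compute $\lambda\otimes\det$, and to match isotypic components with higher Specht polynomials), which is why the paper works with the explicit seminormal-form construction rather than the induced-module description. The two are of course equivalent, but passing from your induced modules to the tableau model would require an additional identification that the paper simply takes from the literature.
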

 
 The following is determined in \cite{ariki1994hecke}. Take a $m$-tuple of Young diagrams $\lambda$ and let $V_\lambda$ be the vector space spanned of all linear combination of all possible $m$-tuples of standard Young tableau of shape $\lambda$. Then the generators of $G(m,1,n)$ act on the $m$-tuples of Young tableaux as follows. First let $\xi$ be a $m^{th}$ root of unity. Let $\lambda$ be a $m$-tuple of Young diagrams, then let $t_p$ denote a $m$-tuple of standard Young tableaux, i.e a basis vector for the representation corresponding to $\lambda$. Let $1 \leq k\leq n-1$, assume that $k,k-1$  can be swapped and still create a $m$-tuple of standard Young tableaux. Let $t_q$ be the $m$-tuple of tableau such that $k,k-1$ are switched in $t_p$. Then recalling that the generators of $G(m,1,n)$ are denoted $s_1,...,s_n$, we define an action on $\lambda$:

\[s_1(t_p)=\xi^{i}t_p\]

when 1 appears in the $i^{th}$ position of the $m$-tuple of tableau. 

\[s_k(t_p) = 
\begin{cases}
  t_p & \text{if $k-1$ and $k$ are in the same row}  \\
  -t_p & \text{if $k-1$ and $k$ are in the same column}\\
  t_q & \text{otherwise}
\end{cases}
\]
\begin{example}\label{Example:dimlamda}

Consider $G=G(4,1,2)$, the $m$-tuple of Young diagrams $\lambda_1=({\Yvcentermath1\yng(2)},-,-,-)$ corresponds to a $1$ dimensional representation $V_{\lambda_1}$, since there is only one standard Young tableau of shape $\lambda$: $({\Yvcentermath1\young(12)},-,-,-)$. Similarly $\lambda_2 = ({\Yvcentermath1\yng(1)},{\Yvcentermath1\yng(1)},-,-)$ is a 2-dimensional representation since there are two standard Young tableau of shape $\lambda_2$: $( {\Yvcentermath1\young(1)},{\Yvcentermath1\young(2)},-,-)$ and $({\Yvcentermath1\young(2)},{\Yvcentermath1\young(1)},-,-)$
\end{example}

While we can describe the representation theory of the general groups $G(m,1,n)$ from now on focus will be on the groups $G(m,1,2)$, since their discriminants are curve singularities.

Recalling for $G(m,1,2)$ the generators are

\[s_1=\begin{bmatrix}\xi_m & 0 \\ 0 & 1 \end{bmatrix}, s_2=\begin{bmatrix}0 &1 \\ 1 & 0 \end{bmatrix}\]
are the generators for $G(m,1,2)$.

\begin{example}
Continuing from Example \ref{Example:dimlamda},
let $Q=({\Yvcentermath1\young(12)},-,-,-)$

$$ s_1(Q)=Q \hspace{1cm} s_2(Q) = Q$$

let $P =  ( {\Yvcentermath1\young(1)},{\Yvcentermath1\young(2)},-,-)$ and $ T = ({\Yvcentermath1\young(2)},{\Yvcentermath1\young(1)},-,-)$. The action of the generators $s_1$ and $s_2$ of $G(4,1,2)$ is as follows: 
\begin{align*}
s_1(P)=P & \hspace{1cm} s_2(P)= T  \\
s_1(T)=\xi_4T & \hspace{1cm} s_2(T)= P
\end{align*}

\end{example}

\begin{lemma}
Let $G=G(m,1,2)$, then $V_{\det}$ is represented by the $m$- tuple of Young tableaux:

\[\alpha=\left\{\left(-,{\Yvcentermath1 \yng(1,1)},-, \dots, - \right)\right\}\]
\end{lemma}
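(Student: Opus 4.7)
The plan is to verify directly that the 1-dimensional representation attached to the $m$-tuple $\alpha = (-,\,\yng(1,1),\,-,\dots,-)$ via the Ariki--Koike construction agrees with the determinant character, by computing the action of the two generators $s_1,s_2$ of $G(m,1,2)$ on the (unique) basis vector.

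First I would observe that $\alpha$ admits exactly one standard $m$-tuple of Young tableaux, namely the one in which position $1$ carries the column tableau with $1$ in the top box and $2$ in the bottom box, and all other components are empty. Hence $\dim V_\alpha = 1$, so the representation is determined by the scalars $s_1,s_2$ act by.

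Next I would apply the action rules given just before the lemma. Since $1$ appears in the $1$st component of the $m$-tuple (indexing from $0$, as is consistent with Example \ref{Example:dimlamda}), the rule $s_1(t_p)=\xi^i t_p$ gives $s_1$ acting by $\xi_m^1 = \xi_m$. For $s_2$, the entries $1$ and $2$ sit in the same column of the single nonempty tableau, so the rule gives $s_2$ acting by $-1$. Comparing with the explicit matrices
\[
s_1=\begin{bmatrix}\xi_m & 0\\ 0 & 1\end{bmatrix},\qquad s_2=\begin{bmatrix}0 & 1\\ 1 & 0\end{bmatrix},
\]
we have $\det(s_1)=\xi_m$ and $\det(s_2)=-1$, matching the scalars computed from $\alpha$. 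Since $s_1,s_2$ generate $G(m,1,2)$, this equality of values on generators suffices to identify $V_\alpha \cong V_{\det}$.

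There is no real obstacle here; the argument is a direct bookkeeping check, and the only subtlety is making sure the indexing convention (components of the $m$-tuple labelled $0$ through $m-1$) is the same one already used in the action formulas and in Example \ref{Example:dimlamda}, so that the exponent of $\xi$ in the $s_1$-action comes out to $1$ rather than some shifted value.
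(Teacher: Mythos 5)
Your proof is correct and follows essentially the same route as the paper: identify the unique standard tableau to get one-dimensionality, read off the scalars $\xi_m$ and $-1$ for $s_1$ and $s_2$ from the action rules, and match them against $\det(s_1)$ and $\det(s_2)$. The indexing point you flag (the nonempty component sitting in position $1$ of the $0$-indexed tuple, giving exponent $1$ on $\xi$) is exactly the convention the paper uses.
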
 

\begin{proof}
We first note the representation given by $\alpha$ is one dimensional since the only possible Young tableau is: $\alpha_1=\left(-,{\Yvcentermath1 \young(1,2)},-,\dots ,- \right)$. Recall that $s_1$,$s_2$  are the generators of the group, and denote by $\xi$ the $m^{th}$ root of unity. We then have the following actions on the Young tableau $\alpha_1$: 

\[s_1\alpha_1 = \xi\alpha_1 \]
\[s_2\alpha_1=-\alpha_1\]
Thus this is the representation $V_{\det}$, since $s_1 \mapsto \xi=\det (s_1)$, $s_2 \mapsto -1 = \det(s_2)$.
\end{proof}

\section{Higher Specht Polynomials}
Let $G=G(m,1,n)$, $S=\mathbb{C}[x_1,...,x_n]$ and $R_+$ the ideal in $S$ generated by the basic invariants $f_1,...,f_n$ of $G$. This section will be about calculating bases for components of the coinvariant algebra $S/R_+$. In \cite{ariki1997} a basis was calculated via defining Higher Specht Polynomials for the groups $G(m,1,n)$ and then later was generalised for the $G(m,p,n)$ in \cite{morita1998}.

\begin{example}\label{ex}
Let $\lambda = \rep{a}{b}$,  $Q=\repg{1}{a}{2}{b}$ and $T=\repg{2}{a}{1}{b}$, then $Q,T \in \mathrm{STab}(\lambda)$.
\end{example}

Let $\lambda\in \mathcal{P}_{m,n}, Q \in \mathrm{STab}(\lambda)$, we create a word $w(Q)$ by first reading columns from bottom to top starting with the left column of $Q^{(0)}$, we then move onto $Q^{(1)}$ and so on, until we have read all the components. For a word $w(Q)$ we then define the index $i(w(Q))$ inductively as follows: the number $1$ has index $i(1)=0$, let the number $p$ have index $i(p)=k$, then if $p+1$ is to the left of $p$ then $i(p+1)=k+1$, if $p+1$ is to the right of $p$ then $i(p+1)=k$. We then assign to $i(w(Q))$ a tableau $i(Q)$ of shape $\lambda$ with the entries of the cells corresponding to their index.
\begin{example}
Following from Example \ref{ex}, $w(Q)=12$,  $w(T)=21$, $i(w(Q))=00$ and $i(w(T))=10$ so $i(Q)=\repg{0}{a}{0}{b}$ and $i(T)= \repg{1}{a}{0}{b}$.
\end{example}

\begin{definition}
    Let $T$ be an $m$-tuple of tableau with shape $\lambda$, for each component $T^{(a)}$, the \textit{Young Symmetrizer}, $e_{T^{(a)}}$ is defined by:
    
    \[e_{T^{(a)}}:=\frac{1}{\alpha_{T^{(a)}}} \sum_{\sigma \in R(T^{(a)}) \tau \in C(T^{(a)})} \text{sgn}(\tau)\tau \sigma,\]
    
    where $\alpha_{T^{(a)}}$ is the hook length of $\lambda^{(a)}$ and $R(T^{(a)}),C(T^{(a)})$ are the row and column stabilizers of $T^{(a)}$ respectively.
\end{definition}

\begin{example}
Continuing from Example \ref{ex}, $e_{Q^{(a)}}=e_{T^{(a)}}=$id since the the components of $Q$ and $T$ have at most one cell and thus the row and column stabilizers are the identities.
\end{example}

\begin{definition}
    Let $\lambda=(\lambda^{(0)},...,\lambda^{(m-1)})$ be a $m$-tuple of Young diagrams $Q\in \mathrm{STab}(\lambda)$, $T \in \mathrm{Tab}(\lambda)$, $x=(x_1,...,x_n)$. Define the Higher Specht polynomial as:
    
    \[\Delta_{Q,T}(x)= \prod^{m-1}_{a=0}\left( e_{T^{(a)}}(x^{mi(Q)^{(a)}}_{T^{(a)}}) (\prod_{k\in T^{(a)}}x^a_k)\right)\]
    
    Where
    \[x^{mi(Q)^{(a)}}_{T^{(a)}}= \prod_{C \in \lambda^{(a)}} x^{mi(Q)^{(a)}(C)}_{T^{(a)}(C)} \] 
\end{definition}

\begin{example}\label{spec}
Again using $Q,T$ from the previous examples. $x^{mi(Q)^{(a)}}_{T^{(a)}}=x_2^0$, $x^{mi(Q)^{(b)}}_{T^{(b)}}=x_1^0$  so $\Delta_{Q,T}(x)=e_{T^{(a)}}(x^0_2)(x_2^a)e_{T^{(b)}}(x^0_1)(x_1^b)= x_2^ax_1^b$, $\Delta_{Q,Q}(x)= x_1^ax_2^b$. Then $x^{mi(T)^{(a)}}_{T^{(a)}}=x_2^{m}$ and $x^{mi(T)^{(b)}}_{T^{(b)}}=x_1^{m}$ $\Delta_{T,T}(x)=x_2^{a+m}x_1^b$ and $\Delta_{T,Q}(x)=x_1^{a+m}x_2^b$.
\end{example}

\begin{theorem}\cite{ariki1997} Let $\lambda\in \mathcal{P}_{m,n}$ and $Q\in \mathrm{STab}(\lambda)$ then:
\begin{itemize}
    \item The subspace $V_Q(\lambda) = \sum_{T \in \mathrm{Tab}(\lambda)}\mathbb{C}\Delta_{Q,T}(x)$ of $Q$ affords an irreducible representation of $G(m,1,n)$
    \item The set $\{\Delta_{Q,T}:T \in \mathrm{STab}(\lambda)\}$ is a basis for $V_Q(\lambda)$
    \item The coinvariant algebra $S_G=S/R_+$ admits an irreducible decomposition:
    
    \[S_G = \bigoplus_{\lambda\in \mathcal{P}_{m,n}} \bigoplus_{Q \in \mathrm{STab}(\lambda)}(V_Q(\lambda)\text{ mod } R_+)\]
\end{itemize}
\end{theorem}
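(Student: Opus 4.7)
The plan is to adapt the classical theory of Specht modules for $S_n$, where Specht polynomials realize the irreducibles inside $\mathbb{C}[x_1,\dots,x_n]$, to the wreath product $G(m,1,n)=(\mathbb{Z}/m\mathbb{Z})\wr S_n$. I would prove the three bullets in order: (a) $V_Q(\lambda)$ is $G$-stable and affords the irreducible indexed by $\lambda$; (b) the standard-tableau higher Specht polynomials form a basis of $V_Q(\lambda)$; (c) the total family reproduces the coinvariant algebra. The crucial external input is the Chevalley--Shephard--Todd isomorphism $S/R_+ \cong \mathbb{C}G$ of $G$-modules, which pins down multiplicities and the total dimension.

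For (a), I would verify the generator formulas for $s_1,\dots,s_n$ recalled earlier in the excerpt. The diagonal $s_1$ sends $x_1\mapsto \xi_m x_1$ and fixes the other variables. In $\Delta_{Q,T}(x)$, the factor $\prod_{k\in T^{(a)}} x_k^a$ contributes $\xi_m^a$ when the cell labeled $1$ lies in the $a$-th component, while each monomial $x^{mi(Q)^{(a)}}_{T^{(a)}}$ in the Young-symmetrized factor has exponents divisible by $m$ and is therefore $s_1$-invariant; this reproduces $s_1 t_p = \xi^a t_p$ on the abstract tableau basis. For a transposition $s_k$ with $k\geq 2$, the Young symmetrizer $e_{T^{(a)}}$ intertwines with the permutation action on the monomials, and a straightening argument of the same flavor as for $S_n$-Specht polynomials expands $s_k \Delta_{Q,T}$ as a combination of $\Delta_{Q,T'}$ with $T'\in\mathrm{STab}(\lambda)$, matching the three cases (same row, same column, otherwise) given in the recap.

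For (b), spanning is immediate: applying $e_{T^{(a)}}$ straightens any non-standard tableau monomial into combinations of standard ones, so $V_Q(\lambda)=\sum_{T\in\mathrm{STab}(\lambda)}\mathbb{C}\Delta_{Q,T}$. Linear independence is detected by a leading-monomial argument: under a suitable term order, the leading monomial of $\Delta_{Q,T}$ is $\prod_a \prod_{k\in T^{(a)}} x_k^{m\,i(Q)^{(a)}(C_k)+a}$, with $C_k$ the cell of $T^{(a)}$ containing $k$, and these leading terms are distinct for distinct standard $T$. Since $|\mathrm{STab}(\lambda)|$ equals $\binom{n}{|\lambda^{(0)}|,\dots,|\lambda^{(m-1)}|}\prod_a f^{\lambda^{(a)}}=\dim V_\lambda$, the cardinality matches the dimension of the irrep.

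For (c), combining (a) and (b) gives
\[ \sum_{\lambda\in\mathcal{P}_{m,n}}\sum_{Q\in\mathrm{STab}(\lambda)} \dim V_Q(\lambda) = \sum_\lambda |\mathrm{STab}(\lambda)|^2 = m^n n! = |G| = \dim_{\mathbb{C}} S/R_+. \]
Since each irrep $V_\lambda$ appears with multiplicity $\dim V_\lambda$ in $S/R_+\cong\mathbb{C}G$, the decomposition will follow once we show that the images of the $\Delta_{Q,T}$ remain linearly independent in the quotient $S/R_+$. This step is the main obstacle: one must rule out nontrivial relations lying in the ideal $(f_1,\dots,f_n)$. I would attack it by refining the leading-term analysis from (b)---the leading monomials of the $\Delta_{Q,T}$ should form a ``descent''-type basis of $S/R_+$ analogous to the Garsia--Stanton monomials---and cross-check via Hilbert series, using that the basic invariants of $G(m,1,n)$ have degrees $m,2m,\dots,nm$ so that the Hilbert series of $S/R_+$ is $\prod_{i=1}^n [mi]_t$. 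Matching this against the bigraded generating function $\sum_{\lambda}\sum_{Q} t^{\deg \Delta_{Q,Q}}\cdot[\dim V_\lambda]_t$ coming from the Specht basis is the technical heart of Ariki's argument and forces the higher Specht polynomials to survive as a basis modulo $R_+$.
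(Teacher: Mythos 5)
The paper does not prove this theorem; it is imported verbatim from \cite{ariki1997} (Ariki--Terasoma--Yamada) with no argument given, so there is no internal proof to compare against. Your outline is consistent with the strategy of that cited source: check equivariance on the generators $s_1,\dots,s_n$, use a straightening/leading-monomial argument for the basis claim, and then a dimension and Hilbert-series count against $S/R_+\cong\mathbb{C}G$ for the decomposition of the coinvariant algebra. The computation for $s_1$ (the exponents in $x^{mi(Q)^{(a)}}_{T^{(a)}}$ are multiples of $m$, so only the factor $\prod_{k\in T^{(a)}}x_k^a$ contributes $\xi_m^a$) and the count $\sum_\lambda|\mathrm{STab}(\lambda)|^2=m^n n!=\dim_{\mathbb{C}}S/R_+$ are both correct.

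The genuine gap is the one you yourself flag: the third bullet. Knowing that each $V_Q(\lambda)$ is an irreducible $G$-submodule of $S$, that the multiplicity of $V_\lambda$ in $S/R_+$ equals $|\mathrm{STab}(\lambda)|$, and that the total dimensions match does \emph{not} yet give the direct-sum decomposition of $S_G$: the images of the various $V_Q(\lambda)$ in the quotient could a priori fail to be independent (some could even vanish), and a count of this kind cannot detect which relations lie in $(f_1,\dots,f_n)$. Your proposed fix --- showing the leading monomials of the $\Delta_{Q,T}$ form a monomial basis of $S/R_+$ of Garsia--Stanton type and cross-checking against the Hilbert series $\prod_{i=1}^n[mi]_t$ --- is the right idea and is essentially what the cited authors do, but as written it is a plan rather than a proof: you have not exhibited the term order, verified that the stated monomial really is leading after applying the Young symmetrizer, or carried out the bigraded generating-function match. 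Until that step is executed, the decomposition in the third bullet is asserted, not established. Since this theorem is used as external input in the paper (e.g.\ in Lemma \ref{mybasi} and throughout Section \ref{Mainse}), citing \cite{ariki1997} for precisely this step, as the paper does, is the economical alternative to completing your sketch.
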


Continuing from Example \ref{spec}, $\{\Delta_{Q,T},\Delta_{Q,Q}\}$ is a basis for $V_Q(\lambda)$ and $\{\Delta_{T,T},\Delta_{T,T}\}$ is a basis for $V_T(\lambda)$ and $V_Q(\lambda)\cong V_T(\lambda)$.

For the case of $G=G(m,1,2)$ we describe the dimension two components of $S_G$ in the above decomposition.
\begin{definition}
    
Fix $G(m,1,2)$ and let $\alpha$ be a Young diagram, then let $\alpha_i=(\alpha_i^{(0)},...,\alpha_i^{(m-1)})$ be the $m$-tuple of Young diagrams such that $\alpha_i^{j}=0$ for all $0\leq j \leq m-1$ with $j\neq i$ and $\alpha_i^i=\alpha$. i.e $\alpha_i$ is the $m$-tuple of Young diagrams with $\alpha$ in the $i^{th}$ position and nothing in every other position.
\end{definition}

Extend this notation to $\alpha_i\beta_j....$ to mean the $m$-tuple of Young diagrams with $\alpha$ in position $i$, $\beta$ in position $j$ and so on.

\begin{example}
Let $G=G(4,1,2)$ then $W_{\det}= {\Yvcentermath1 \yng(1,1)}_1$, and $({\Yvcentermath1 \yng(1)},-,{\Yvcentermath1 \yng(1)},-)= {\Yvcentermath1 \yng(1)}_0{\Yvcentermath1 \yng(1)}_2$
\end{example}

The following can easily be seen:

\begin{lemma} The representations of $G(m,1,2)$ are of the following form:
\begin{itemize}
    \item The one dimensional representations corresponds to the $m$-tuple of diagrams; ${\Yvcentermath1 \yng(2)}_i$ or ${\Yvcentermath1 \yng(1,1)}_i$ for $0\leq i \leq m-1$ orientation. 
    
    \item The two dimensional representations corresponds to the $m$-tuple of diagrams $\yng(1)_i\yng(1)_j$, where $0\leq i<j \leq m-1$
\end{itemize}
\end{lemma}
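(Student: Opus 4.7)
The plan is to enumerate $\mathcal{P}_{m,2}$ directly and compute $|\stab(\lambda)|$ for each of its elements, then invoke both the bijection between irreducible representations of $G(m,1,2)$ and $\mathcal{P}_{m,2}$ and the identification $\dim V_\lambda = |\stab(\lambda)|$ noted in the previous section.

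First I would observe that, since $n=2$, every $\lambda = (\lambda^{(0)},\ldots,\lambda^{(m-1)}) \in \mathcal{P}_{m,2}$ distributes exactly two cells among its $m$ components. Only two patterns are possible: either (a) both cells lie in a single component $\lambda^{(i)}$, or (b) the two cells lie in distinct components $\lambda^{(i)}$ and $\lambda^{(j)}$ with $i \neq j$.

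In case (a), the partitions of $2$ are $(2)$ and $(1,1)$, so the nonempty component of $\lambda$ is either ${\Yvcentermath1 \yng(2)}$ or ${\Yvcentermath1 \yng(1,1)}$. In either subcase the diagram admits a unique standard filling by $\{1,2\}$, so $|\stab(\lambda)|=1$ and the representation is one-dimensional. In case (b), each nonempty component is ${\Yvcentermath1 \yng(1)}$; fixing the convention $i<j$ to avoid double-counting, the $m$-tuple ${\Yvcentermath1 \yng(1)}_i {\Yvcentermath1 \yng(1)}_j$ admits exactly two standard fillings (choose whether $1$ or $2$ lies at position $i$), so $|\stab(\lambda)|=2$ and the representation is two-dimensional.

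Finally, to verify that the list above exhausts all irreducible representations I would check the sum-of-squares identity: the $2m$ one-dimensional and $\binom{m}{2}$ two-dimensional irreducibles above contribute $2m \cdot 1 + \binom{m}{2}\cdot 4 = 2m + 2m(m-1) = 2m^2 = |G(m,1,2)|$, as required. There is no serious obstacle in this argument; the only point requiring mild care is the convention $i<j$ in case (b), which avoids counting each two-dimensional irreducible twice, since swapping the two single-cell components yields the same unordered $m$-tuple.
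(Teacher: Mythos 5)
Your proposal is correct and follows essentially the same route as the paper's proof: both enumerate the ways of distributing two cells among the $m$ components of a tuple in $\mathcal{P}_{m,2}$, splitting into the same-component case (shapes ${\Yvcentermath1 \yng(2)}$ or ${\Yvcentermath1 \yng(1,1)}$, one standard filling, dimension one) and the distinct-component case (two standard fillings, dimension two). Your added sum-of-squares check $2m+4\binom{m}{2}=2m^2=|G(m,1,2)|$ is a worthwhile sanity check but not a different argument, since exhaustiveness already follows from the bijection with $\mathcal{P}_{m,2}$ that both you and the paper invoke.
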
{}
\begin{proof}
Building representations of $G(m,1,2)$ is done by placing two cells in an $m$-tuple. There are two cases; the cells are in different positions or they are in the same position. In the first case these correspond to the two-dimensional representation. In the second case, there is the choice of the two cells being in the same column or in the same row - these correspond to the 1-dimensional representations.
\end{proof}

\begin{lemma}
Let $\alpha$ be an $m$-tuple of Young diagrams corresponding  to the representation $W_\alpha$ of $G$. The Young diagram $\beta$ representing the representation $W_\alpha\otimes W_{\det}$ is obtained by the following:
\begin{itemize}
    \item If $W_\alpha$ is one dimensional then $\alpha$ is of the form; ${\Yvcentermath1 \yng(2)}_i$ or ${\Yvcentermath1 \yng(1,1)}_i$ for some $0 \leq i <m$. Then $\beta$ is;  ${\Yvcentermath1 \yng(1,1)}_{i+1}$ or ${\Yvcentermath1 \yng(2)}_{i+1}$ respectively where $i+1$ is taken modulo $m$. 
    
    \item If $W_\alpha$ is a 2 dimensional representation, then $\alpha$ is of the form ${\Yvcentermath1 \yng(1)}_{i}{\Yvcentermath1 \yng(1)}_{j}$ where $0\leq i <j<m$, then $\beta$ is of the form ${\Yvcentermath1 \yng(1)}_{i+1}{\Yvcentermath1 \yng(1)}_{j+1}$ where $i+1, j+1$ are taken modulo $m$.
\end{itemize}{}
\end{lemma}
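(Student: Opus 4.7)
The plan is to use two standard facts: tensoring an irreducible representation $W_\alpha$ with a one-dimensional representation yields an irreducible representation of the same dimension, and an irreducible representation of a finite group is determined up to isomorphism by its character. It therefore suffices, in each case, to verify that the candidate $\beta$ has the same character on the two generators $s_1, s_2$ as $W_\alpha \otimes W_{\det}$.

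For the one-dimensional cases the representation coincides with its own character and the computation is immediate. Taking $\alpha = {\Yvcentermath1 \yng(2)}_i$ with its unique standard tableau, the action rules recalled earlier give $s_1 \mapsto \xi_m^i$ (since $1$ sits in the $i$-th component) and $s_2 \mapsto 1$ (since $1$ and $2$ lie in the same row). Multiplying by the character of $\det$, namely $s_1 \mapsto \xi_m$ and $s_2 \mapsto -1$, yields $s_1 \mapsto \xi_m^{i+1}$ and $s_2 \mapsto -1$; these are precisely the scalars read off the unique standard tableau of ${\Yvcentermath1 \yng(1,1)}_{i+1}$. The symmetric case $\alpha = {\Yvcentermath1 \yng(1,1)}_i$ follows in the same way, with the roles of $s_2 \mapsto 1$ and $s_2 \mapsto -1$ interchanged.

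For the two-dimensional case let $\alpha = \rep{i}{j}$ with basis $(P,T)$, where $P$ has $1$ in the $i$-th component and $2$ in the $j$-th, and $T$ is the other standard tableau. The action rules give $s_1 P = \xi_m^i P$, $s_1 T = \xi_m^j T$, $s_2 P = T$ and $s_2 T = P$. Fixing a nonzero $v \in W_{\det}$, I would exhibit an intertwiner by mapping $(P \otimes v,\ -T \otimes v)$ to the analogous basis $(P', T')$ of $\rep{i+1}{j+1}$: the $s_1$-eigenvalues pick up an extra factor of $\xi_m$ on each vector, and the sign flip on the second vector compensates for $\det(s_2) = -1$, so that $s_2$ genuinely swaps the two basis elements, matching the prescribed action on $(P',T')$.

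The main point requiring care is the indexing convention $0 \le i < j < m$ in the two-dimensional case: if either $i+1 = m$ or $j+1 = m$, one must reduce modulo $m$ and then relabel the resulting pair in increasing order. Since the isomorphism class of $\rep{i'}{j'}$ depends only on the unordered set $\{i', j'\}$, this relabelling does not alter the target representation, and the claim follows in all cases. Apart from this bookkeeping, the only substantive step is the intertwining argument for the two-dimensional case; this is where I would expect the reader to want the signs spelled out carefully.
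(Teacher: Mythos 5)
Your proposal is correct and follows essentially the same route as the paper: an explicit computation of the $s_1$- and $s_2$-actions on the tensor product basis, with the same sign flip on the second basis vector of the two-dimensional representation to make $s_2$ act as a genuine swap. Your extra remark about relabelling the pair $\{i+1,j+1\}$ in increasing order after reduction modulo $m$ is a sensible bookkeeping point that the paper leaves implicit.
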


\begin{proof}

Let $\alpha$ be of the form ${\Yvcentermath1 \yng(2)}_i$ then $s_1(\alpha)=\xi^i\alpha$, where $\xi$ is the $m^{th}$ root of unity. $\alpha$ is a one-dimensional vector space as it spanned by the tableau $t_1={\Yvcentermath1 \young(12)}_i$, and $\alpha \otimes \det$ is generated by $t_1 \otimes {\Yvcentermath1 \young(1,2)}_1$.

\begin{equation*}
s_1(t_1 \otimes {\Yvcentermath1 \young(1,2)}_1)=s_1(t_1)\otimes s_1({\Yvcentermath1 \young(1,2)}_1) =\xi^i t_1 \otimes \xi{\Yvcentermath1 \young(1,2)}_1= \xi^{i+1}(t_1 \otimes {\Yvcentermath1 \young(1,2)}_1)  \end{equation*}

\begin{align*}
s_2(t_1 \otimes {\Yvcentermath1 \young(1,2)}_1)=&s_2(t_1)\otimes s_2({\Yvcentermath1 \young(1,2)}_1) =\begin{cases}
  t_1 \otimes -{\Yvcentermath1 \young(1,2)}_1  & \text{if $1$ and $2$ are in the same row in $t_1$}  \\
  -t_1 \otimes -{\Yvcentermath1 \young(1,2)}_1& \text{if $1$ and $2$ are in the same column in $t_1$}\\
  t_1  \otimes -{\Yvcentermath1 \young(1,2)}_1& \text{otherwise}
\end{cases}
\\ =& -t_1  \otimes {\Yvcentermath1 \young(1,2)}_1
\end{align*}
Thus the young tableau associated with $\alpha \otimes \det$ is ${\Yvcentermath1 \yng(1,1)}_{i+1}$ where $i+1$ is taken mod $m$. The case $\alpha={\Yvcentermath1 \yng(1,1)}_i$ is similar.

Let $\alpha$ be of the form ${\Yvcentermath1 \yng(1)}_{i}{\Yvcentermath1 \yng(1)}_{j}$ where $0\leq i <j \leq m-1$, $\alpha$ is then a two-dimensional representation spanned by the tableau $t_1= {\Yvcentermath1 \young(1)}_{i}{\Yvcentermath1 \young(2)}_{j}, -t_2={\Yvcentermath1 \young(2)}_{i}{\Yvcentermath1 \young(1)}_{j}$. Note that the negative sign is here for clarity 
\begin{align*}
s_1(t_1 \otimes {\Yvcentermath1 \young(1,2)}_1)=&s_1(t_1)\otimes s_1({\Yvcentermath1 \young(1,2)}_1) =\xi^i t_1 \otimes \xi{\Yvcentermath1 \young(1,2)}_1= \xi^{i+1}(t_1 \otimes {\Yvcentermath1 \young(1,2)}_1) \\ s_1(-t_2 \otimes {\Yvcentermath1 \young(1,2)}_1)=&s_1(-t_2)\otimes s_2({\Yvcentermath1 \young(1,2)}_1) =-\xi^j t_2\otimes \xi{\Yvcentermath1 \young(1,2)}_1= \xi^{j+1}(-t_2 \otimes {\Yvcentermath1 \young(1,2)}_1)   \end{align*}
\begin{align*}
s_2(t_1 \otimes {\Yvcentermath1 \young(1,2)}_1)=&s_2(t_1)\otimes s_2({\Yvcentermath1 \young(1,2)}_1) =\begin{cases}
  t_1 \otimes -{\Yvcentermath1 \young(1,2)}_1  & \text{if $1$ and $2$ are in the same row}  \\
  -t_1 \otimes -{\Yvcentermath1 \young(1,2)}_1& \text{if $1$ and $2$ are in the same column}\\
  t_2  \otimes -{\Yvcentermath1 \young(1,2)}_1& \text{otherwise}
\end{cases}
\\ =& -t_2  \otimes {\Yvcentermath1 \young(1,2)}_1
\end{align*}
\begin{align*}
s_2(-t_2 \otimes {\Yvcentermath1 \young(1,2)}_1)=&s_2(t_2)\otimes s_2({\Yvcentermath1 \young(1,2)}_1) =\begin{cases}
  -t_2 \otimes -{\Yvcentermath1 \young(1,2)}_1  & \text{if $1$ and $2$ are in the same row}  \\
  t_2 \otimes -{\Yvcentermath1 \young(1,2)}_1& \text{if $1$ and $2$ are in the same column}\\
  -t_1  \otimes -{\Yvcentermath1 \young(1,2)}_1& \text{otherwise}
\end{cases}
\\ =& t_1  \otimes {\Yvcentermath1 \young(1,2)}_1
\end{align*}
Thus is isomorphic to the representation corresponding to the tableau  ${\Yvcentermath1 \yng(1)}_{i+1}{\Yvcentermath1 \yng(1)}_{j+1}$ where $i+1, j+1$ are taken modulo $m$.
\end{proof}

\begin{lemma}\label{mybasi}
Let $G=G(m,1,2)$, and let $\lambda={\Yvcentermath1 \yng(1)}_{i}{\Yvcentermath1 \yng(1)}_{j}$. Then using Higher Specht polynomials, a basis for the isotypical component of $S_G$ isomorphic to $W_\lambda$ is $\{x^{i}y^{j},x^{j}y^{i},x^{i+m}y^{j},x^{j}y^{i+m}\}$.
\end{lemma}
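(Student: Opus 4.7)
The plan is to apply Ariki's decomposition theorem directly to the specific $\lambda = \yng(1)_i\yng(1)_j$ and observe that all the Higher Specht polynomials are already essentially computed in Example \ref{spec}. First I would enumerate $\mathrm{STab}(\lambda)$: since each component of $\lambda$ has a single cell and $n=2$, there are exactly two standard tableaux, namely
\[Q = \young(1)_i\young(2)_j \qquad \text{and} \qquad T = \young(2)_i\young(1)_j.\]
By Ariki's theorem, the isotypical component of $W_\lambda$ in $S_G$ decomposes as $(V_Q(\lambda)\bmod R_+)\oplus(V_T(\lambda)\bmod R_+)$, and each summand has basis $\{\Delta_{Q,T'} : T' \in \mathrm{STab}(\lambda)\}$ respectively $\{\Delta_{T,T'} : T' \in \mathrm{STab}(\lambda)\}$. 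This gives exactly four basis vectors, matching the dimension count $(\dim W_\lambda)^2 = 4$.

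Next I would compute the words and indices: $w(Q)=12$ with $i(w(Q))=00$, so $i(Q) = \repg{0}{i}{0}{j}$; $w(T)=21$ with $i(w(T))=10$, so $i(T)=\repg{1}{i}{0}{j}$. The Young symmetrizers are trivial since each $\lambda^{(a)}$ consists of a single cell, so the row/column stabilizers are trivial.

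Then I would substitute into the definition of $\Delta_{Q',T'}$, which is precisely the calculation carried out in Example \ref{spec} (taking $a=i$, $b=j$, $x_1=x$, $x_2=y$). This yields
\[\Delta_{Q,Q}=x^iy^j,\quad \Delta_{Q,T}=x^jy^i,\quad \Delta_{T,Q}=x^{i+m}y^j,\quad \Delta_{T,T}=x^jy^{i+m}.\]
Taking the union of the bases of $V_Q(\lambda)$ and $V_T(\lambda)$ and reducing modulo $R_+ = (\sigma_1,\sigma_2)$ then gives the claimed set. Linear independence modulo $R_+$ is automatic from Ariki's theorem, since the decomposition of $S_G$ is a direct sum.

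The step with the most potential for confusion is making sure the multiplicity count is correct: one must check that only these two standard tableaux contribute copies of $W_\lambda$ to $S_G$, which holds because $V_{Q'}(\lambda')\cong W_{\lambda'}$ and the decomposition in Ariki's theorem runs over distinct pairs $(\lambda',Q')$. Since no other $\lambda'\in\mathcal{P}_{m,2}$ can yield $W_\lambda$, the four polynomials above exhaust the isotypical component, completing the proof.
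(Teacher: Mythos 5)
Your proposal is correct and follows essentially the same route as the paper, whose proof simply refers to Example \ref{spec}: you enumerate the two standard tableaux $Q$ and $T$, compute the Higher Specht polynomials exactly as in that example, and invoke Ariki's decomposition theorem to conclude that these four polynomials form a basis of the isotypical component. The only difference is that you spell out the justification (dimension count, triviality of the Young symmetrizers, and the multiplicity argument) that the paper leaves implicit.
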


\begin{proof}
See Example \ref{spec}.
\end{proof}

From now on if $\lambda$ is a $m$-tuple of Young diagrams, we will also denote the representation $V_{\lambda}$ corresponding to it as $\lambda$.
 \begin{lemma}\label{Lastrep}
Let $G=G(m,1,2)$ and $\lambda={\Yvcentermath1 \yng(1)}_{m-2}{\Yvcentermath1 \yng(1)}_{m-1}$ the map of $z|_{\lambda}$ on the isotypical component of type $\lambda$ is matrix factorization equivalent to:
    
    \[\begin{tikzcd}[ampersand replacement=\&, column sep =2.5cm]
S^G_{\lambda \otimes \det} \arrow{r}{j|_{\lambda \otimes \mathrm{det}}} \& S^G_{\lambda} \arrow{r}{ \begin{bmatrix}
-2\sigma_1 & \sigma_2\sigma_1 \\ \sigma_2 & -2\sigma_1
    \end{bmatrix} \otimes I_2}  \& S^G_{\lambda \otimes \det}\end{tikzcd}\]

\end{lemma}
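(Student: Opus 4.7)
The plan is to work in the $R$-bases of $S^G_\lambda$ and $S^G_{\lambda\otimes\det}$ given by higher Specht polynomials, compute the matrix of $z|_\lambda$ directly, and then identify it up to change of basis with the claimed tensor product.

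First I would extract the bases from Lemma \ref{mybasi}. Applied with $(i,j)=(m-2,m-1)$ it yields the $R$-basis $\{f_1,f_2,f_3,f_4\}=\{x^{m-2}y^{m-1},\ x^{m-1}y^{m-2},\ x^{2m-2}y^{m-1},\ x^{m-1}y^{2m-2}\}$ of $S^G_\lambda$, and since $\lambda\otimes\det={\Yvcentermath1 \yng(1)}_{0}{\Yvcentermath1 \yng(1)}_{m-1}$, re-applying it with $(i,j)=(0,m-1)$ gives $\{e_1,e_2,e_3,e_4\}=\{y^{m-1},\ x^{m-1},\ x^m y^{m-1},\ x^{m-1}y^m\}$ for $S^G_{\lambda\otimes\det}$. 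Next I would multiply each $f_k$ by $z=xy(x^m-y^m)$ and reduce to the $e_\ell$-basis using $x^m y^m=\sigma_1$, $x^{2m}=\sigma_2 x^m-\sigma_1$ and $y^{2m}=\sigma_2 y^m-\sigma_1$ (the last two following from $x^m,y^m$ being the roots of $t^2-\sigma_2 t+\sigma_1$), together with the derived identities $x^{2m-1}=\sigma_2 e_2-e_4$ and $y^{2m-1}=\sigma_2 e_1-e_3$. A direct computation yields
\begin{align*}
z f_1 &= 2\sigma_1 e_2-\sigma_2 e_4, & z f_2 &= -2\sigma_1 e_1+\sigma_2 e_3,\\
z f_3 &= \sigma_1\sigma_2 e_2-2\sigma_1 e_4, & z f_4 &= -\sigma_1\sigma_2 e_1+2\sigma_1 e_3.
\end{align*}

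I would then exploit the block structure of these formulas: $\{f_1,f_3\}$ maps into $\mathrm{span}_R\{e_2,e_4\}$ and $\{f_2,f_4\}$ into $\mathrm{span}_R\{e_1,e_3\}$, so after reordering rows and columns the matrix of $z|_\lambda$ is block-diagonal with $2\times 2$ blocks
\[B_1=\begin{bmatrix}2\sigma_1 & \sigma_1\sigma_2\\ -\sigma_2 & -2\sigma_1\end{bmatrix}\quad\text{and}\quad -B_1.\]
Writing $N:=\begin{bmatrix}-2\sigma_1 & \sigma_1\sigma_2\\ \sigma_2 & -2\sigma_1\end{bmatrix}$, one checks $B_1=N\cdot\mathrm{diag}(-1,1)$ and $-B_1=N\cdot\mathrm{diag}(1,-1)$, so absorbing $\mathrm{diag}(-1,1,1,-1)$ into the source basis converts the matrix into $N\oplus N$, which is $N\otimes I_2$ after a further permutation of rows and columns. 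Since $\det N=\sigma_1\sigma_2^2-4\sigma_1^2=-\Delta$, the pair $(N\otimes I_2,\,-\mathrm{adj}(N)\otimes I_2)$ is a matrix factorization of $\Delta$, and this is the claimed equivalence.

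The main obstacle will be the monomial bookkeeping in the reduction step, and in particular the sign-tracking needed to identify the two diagonal blocks with a common copy of $N$ rather than with $N$ and $-N$. Conceptually the tensor form is automatic from Schur's lemma: $S^G_\lambda\cong V_\lambda\otimes_{\mathbb{C}}\Hom_G(V_\lambda,S^G_\lambda)$ with the multiplicity space a free $R$-module of rank $\dim\lambda=2$, so any $G$-equivariant $R$-linear map $S^G_\lambda\to S^G_{\lambda\otimes\det}$ must be of the form (a $2\times 2$ matrix over $R$)$\otimes\mathrm{id}_{V_\lambda}$, and the substance of the lemma is identifying that $2\times 2$ matrix as $N$.
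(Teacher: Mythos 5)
Your proposal is correct and follows essentially the same route as the paper: compute the action of $z$ on the higher Specht polynomial bases of $S^G_\lambda$ and $S^G_{\lambda\otimes\det}$ from Lemma \ref{mybasi}, reduce via $x^my^m=\sigma_1$ and $x^m+y^m=\sigma_2$, and identify the resulting $4\times 4$ matrix with $N\otimes I_2$ up to row/column sign changes and permutation. Your bookkeeping of the signs and the explicit change of basis is in fact more careful than the paper's, which silently absorbs them.
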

\begin{proof}
From the above, a basis for the isotypical component of $S/R_+$ corresponding to $\lambda$ is:
\[\{x^{m-2}y^{m-1},x^{m-1}y^{m-2},x^{2m-2}y^{m-1},x^{m-1}y^{2m-2}\}.\]

And for a basis for the isotypical component corresponding to $\lambda \otimes \textrm{det}$ is: 
\[\{x^{m-1},y^{m-1},x^{m-1}y^m,x^{m}y^{m-1}\}\]

Then, multiplication of $z$ on the basis elements can be calculated as follows:
\begin{equation*}
\begin{split}
    z(x^{m-2}y^{m-1})&=x^{m-1}y^m(x^m-y^m) \\
    &=x^{2m-1}y^m-x^{m-1}y^{2m} \\
    &=-\sigma_2(x^{m-1}y^m)+2\sigma_1x^{m-1} \\
    z(x^{m-1}y^{m-2})&=\sigma_2(x^{m}y^{m-1})-2\sigma_1y^{m-1}
\end{split}
\end{equation*}

and
\begin{equation*}
    \begin{split}
        z(x^{2m-2}y^{m-1})&= x^{2m-1}y^m(x^m-y^m) \\ &= \sigma_2\sigma_1(x^{m-1})-2\sigma_1(x^{m-1}y^m)
    \end{split}
\end{equation*}

We obtain the matrix: 
    \[\begin{bmatrix}
    -2\sigma_1 & 0 & \sigma_2\sigma_1 & 0 \\ 0 & -2\sigma_1 & 0 & \sigma_2\sigma_1 \\ \sigma_2 & 0 & -2\sigma_1 & 0 \\ 0 & \sigma_2 & 0 & -2\sigma_1
    \end{bmatrix}= \begin{bmatrix} -2\sigma_1 & \sigma_2\sigma_1 \\ \sigma_2 & -2\sigma_1\end{bmatrix} \otimes I_2\]

\end{proof}

Matrix factorizations were used in \cite{YujiCM} Chapter 9 to calculate the $CM$ modules over one dimensional ADE singularities. Note that Yoshino calculated them for the completed case, but from Remark \ref{grade} we can instead consider them as graded matrix factorizations over the graded local ring $S$.

\begin{definition}
  Let $\Lambda=\mathbb{C}[x,y]/(f)$, where $f=x^2+y^{4}$, then Spec$(\Lambda)$ is an $A_3$ singularity.
\end{definition}

\begin{remark}
 Let $G=G(m,1,2)$ and recall that $\Delta=\sigma_1(\sigma_2^2-4\sigma_1)$. Then $\spec(R/(\Delta))$ is an $A_3$-singularity. This is seen by the coordinate change $x= \sigma_1- \frac{\sigma_2^2}{8}$ and $y = \frac{\sigma_2}{2\sqrt{2}}$.
\end{remark}

\begin{theorem}\cite{YujiCM} (9.9)
The non-trivial indecomposable $CM$ modules over $\mathbb{C}[\sigma_1,\sigma_2]/(\Delta)$, where $\Delta=\sigma_1(\sigma_2^2-4\sigma_1)$ are given as cokernels of the matrix factorizations: $(\alpha,\beta)=(\sigma_1,\sigma_2^2-4\sigma_1)$, $(\beta,\alpha)=(\sigma_2^2-4\sigma_1,\sigma_1)$ and the matrix:

\[(\phi,\psi)=(\begin{bmatrix} 2\sigma_1 & \sigma_2\sigma_1 \\ \sigma_2 & 2\sigma_1
\end{bmatrix},\begin{bmatrix} -2\sigma_1 & \sigma_2\sigma_1 \\ \sigma_2 & -2\sigma_1
\end{bmatrix})\]

Denote the modules: $A = \cok(\alpha,\beta)$, $B = \cok(\beta,\alpha)$, $X=\cok(\phi,\psi)$. Note $\cok(\phi,\psi)=\cok(\psi,\phi)$. These are the indecomposable modules in $CM(\Lambda)$.
\end{theorem}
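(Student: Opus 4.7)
The plan is to reduce to the classification of indecomposable graded MCM modules over a simple curve singularity of type $A_3$ and then match the three expected modules against the three matrix factorizations listed.

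First, I would use the remark immediately preceding the theorem to identify $R/(\Delta)$ with the $A_3$ singularity $\Lambda = \mathbb{C}[x,y]/(x^2+y^4)$. Completing the square in $\Delta = \sigma_1\sigma_2^2 - 4\sigma_1^2$ and rescaling gives, up to a unit, the normal form $x^2+y^4$ under the change of variables $x = 2(\sigma_1 - \sigma_2^2/8)$, $y = \sigma_2/2$ (with an auxiliary $i$ to absorb the sign). Because this is a graded ring isomorphism onto the normal form, graded MCM modules over the two rings correspond via restriction of scalars, so it suffices to classify over $\Lambda$.

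Second, I would quote the classical classification of indecomposable graded MCM modules over simple hypersurface curve singularities (Yoshino, Chapter 9; equivalently the McKay correspondence for the cyclic group $\mathbb{Z}/4\mathbb{Z}$ acting on $\mathbb{C}^2$): the $A_3$-singularity has exactly three non-trivial indecomposable MCM modules, two of rank one (arising from the factorization $x^2+y^4 = (x+iy^2)(x-iy^2)$) and one of rank two.

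Third, I would match these against the matrix factorizations in the statement. A direct computation shows $\alpha\beta = \sigma_1(\sigma_2^2-4\sigma_1) = \Delta$, and a short $2\times 2$ multiplication gives $\phi\psi = \psi\phi = \Delta\cdot I_2$, so all three pairs genuinely factor $\Delta$. The cokernels $A = R/(\sigma_1)$ and $B = R/(\sigma_2^2-4\sigma_1)$ are the coordinate rings of the two irreducible branches of $\spec(R/(\Delta))$; under the identification with $\Lambda$ these correspond to the two linear factors of $x^2+y^4$, so they are the two expected rank-one indecomposables. For $X = \cok(\phi,\psi)$ I would verify indecomposability by showing that $\phi$ cannot be brought to block-diagonal form by invertible row and column operations over $R$, which together with a rank count forces $X$ to be the remaining rank-two indecomposable.

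The main obstacle is the completeness claim invoked in step two; I would not re-prove this but cite Yoshino directly. If a self-contained argument is wanted, the cleanest route is Kn\"orrer periodicity, which reduces the classification over $\mathbb{C}[x,y]/(x^2+y^4)$ to that over the zero-dimensional ring $\mathbb{C}[y]/(y^4)$, where the indecomposable MCMs are simply $\mathbb{C}[y]/(y^k)$ for $1 \le k \le 3$ and the classification is elementary linear algebra.
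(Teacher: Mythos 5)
This statement is quoted from Yoshino (9.9) and the paper gives no proof of its own beyond the preceding remark identifying $\spec(R/(\Delta))$ with an $A_3$ singularity via the coordinate change $x=\sigma_1-\sigma_2^2/8$, $y=\sigma_2/(2\sqrt{2})$, together with the subsequent remark handling the graded versus complete issue. Your proposal takes essentially the same route --- reduce to the $A_3$ normal form, cite Yoshino for completeness, and verify the listed matrix factorizations directly --- so it is consistent with the paper; the extra details you supply (the product computations, the branch interpretation of $A$ and $B$, and Kn\"orrer periodicity as a self-contained fallback) are all correct.
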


Thus the matrix factorization from Lemma \ref{Lastrep} is equivalent (as matrix factorizations) to $\phi\oplus\phi$ (and also $\psi\oplus \psi$), and so  $M_{\rep{m-2}{m-1}}\cong X \oplus X.$
\begin{remark}
The graded local case, the category $CM(\Lambda)$ is Krull-Schmidt and so direct sum decomposition are unique. We take the grading of the complete modules and see that the above list is the complete set of non-isomorphic up up to degree shifting indecomposable objects in $CM(R/(\Delta))$.
\end{remark}

We need to check that the support of $S/(z)$ on $R/(\Delta)$ modules is the entire list of isomorphism classes of modules in $CM(R/(\Delta))$.

\begin{theorem}\label{first}
Let $G=G(m,1,2)$ then the following is a decomposition of $S/(z)$ as $R/(\Delta)$ $CM$ modules:

\begin{equation*}
    \begin{split}
        S/(z) &\cong R/(\sigma_1)\oplus R/(\Delta) \oplus (R/(\sigma_2^2-4\sigma_1))^{2{{m-1}\choose 2}+m-1} \oplus X^{2m-2} \\
        & = A\oplus B^{2{{m-1}\choose 2}+m-1} \oplus X^{2m-2}.
    \end{split}
\end{equation*}

In particular $S/(z)$ is a representation generator for $CM(R/(\Delta))$.

\end{theorem}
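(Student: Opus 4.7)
The plan is to invoke the isotypical decomposition $S/(z)\cong\bigoplus_{\chi\in\mathrm{irrep}(G)}M_\chi$ from Section \ref{Structure} and compute each summand individually. The irreducible representations of $G=G(m,1,2)$ split into $2m$ linear characters, parametrized by pairs $(a,b)\in\{0,\dots,m-1\}\times\{0,1\}$ recording $m_\mathfrak{O}(\theta)$ and $m_\mathfrak{q}(\theta)$ for the two orbits of hyperplanes, together with $\binom{m}{2}$ two-dimensional representations $\lambda_{i,j}$ for $0\le i<j\le m-1$. This accounts for $2m+4\binom{m}{2}=2m^2=|G|$ dimensions.

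For the linear characters, I would apply the framework of Section \ref{lin}: $zj_\theta=(xy)^{a+1}(x^m-y^m)^{b+1}$ while $j_{\theta\otimes\det}=(xy)^{(a+1)\bmod m}(x^m-y^m)^{(b+1)\bmod 2}$, so their quotient (the multiplier in the matrix factorization) is $\sigma_1^{\epsilon_a}(\sigma_2^2-4\sigma_1)^{\epsilon_b}$, where $\epsilon_a,\epsilon_b\in\{0,1\}$ record wrap-around in $a$ and $b$ respectively. Splitting into the four sub-cases yields $M_\theta=0$ when neither index wraps, $M_\theta=R/(\sigma_1)=A$ when only $a$ wraps (i.e.\ $(a,b)=(m-1,0)$), $M_\theta=R/(\sigma_2^2-4\sigma_1)=B$ when only $b$ wraps (giving $m-1$ copies), and $M_\theta=R/(\Delta)$ when both wrap. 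This contributes $A\oplus R/(\Delta)\oplus B^{m-1}$ to the total.

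For each two-dimensional representation $\lambda_{i,j}$, the plan is to fix the basis $\{x^iy^j,x^jy^i,x^{i+m}y^j,x^jy^{i+m}\}$ of $S^G_{\lambda_{i,j}}$ given by Higher Specht polynomials (Lemma \ref{mybasi}) together with the analogous basis of $S^G_{\lambda_{i,j}\otimes\det}$, then expand $z\cdot f_k$ in the target basis using the identities $x^m+y^m=\sigma_2$ and $x^my^m=\sigma_1$ to reduce the exponents. After reordering the rows and columns to pair $\{f_1,f_3\}$ with $\{g_1,g_3\}$ and $\{f_2,f_4\}$ with $\{g_2,g_4\}$, the resulting $4\times 4$ matrix splits as a direct sum of two $2\times 2$ blocks that differ only by sign, hence yield isomorphic cokernels. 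The dichotomy is then governed by whether $j\le m-2$ or $j=m-1$: in the former, no wrap-around occurs in $\lambda\otimes\det$, each $2\times 2$ block has entries of low degree (no $\sigma_1$ factor in the off-diagonal), and row-reducing over $R$ yields cokernel $R/(\sigma_2^2-4\sigma_1)=B$; in the latter, the exponent wraps around via $(xy)^m=\sigma_1$, injecting a factor of $\sigma_1$ into the off-diagonal entries, producing a block equivalent to $\phi$ from Yoshino's classification with cokernel $X$. This latter case is exactly the content of Lemma \ref{Lastrep}, and the same argument runs for every $\lambda_{i,m-1}$ with $0\le i\le m-2$.

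Counting yields $\binom{m-1}{2}$ representations in the first 2-dim sub-case (contributing $B^{2\binom{m-1}{2}}$) and $m-1$ in the second (contributing $X^{2(m-1)}$). Adding these to the linear-character contribution produces exactly the claimed decomposition. Since the only indecomposable $CM$ modules over the $A_3$ singularity $R/(\Delta)$ are $A$, $B$, $X$, and $R/(\Delta)$ itself (by the classification cited from Yoshino), every isomorphism class appears, and so $S/(z)$ is a representation generator. The main obstacle is keeping track of how the exponent wrap-arounds in the large basis vectors $x^{i+m}y^j,x^jy^{i+m}$ interact with the $\sigma_1,\sigma_2$ reductions: this is precisely the mechanism that converts $B$-type blocks into $X$-type blocks in the $j=m-1$ case, and the bookkeeping must be handled carefully to ensure the block decomposition comes out cleanly.
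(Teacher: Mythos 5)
Your proposal is correct and follows essentially the same route as the paper's proof: the linear characters are handled via the wrap-around of $m_\mathfrak{O}$ and $m_\mathfrak{q}$ exactly as in Section \ref{lin}, and the two-dimensional representations are treated by the explicit Higher Specht basis computation with the dichotomy $j\le m-2$ (giving $B^2$) versus $j=m-1$ (giving $X^2$ as in Lemma \ref{Lastrep}), with matching counts. If anything, your write-up makes explicit the unit-entry row reduction and the $\sigma_1$-injection mechanism that the paper compresses into ``calculations similar to those above.''
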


\begin{proof}
From Section \ref{lin}, the linear characters give the modules $M_{\theta_\mathfrak{O}^{m-1}}=R/(\Delta)_{\mathfrak{O}}=R/(\sigma_1)$ and $M_{\theta_\mathfrak{q}}=R/(\Delta)_{\mathfrak{q}}=R/(\sigma_2^2 -4\sigma_1)$. Moreover for the modules  $M_{\theta_\mathfrak{O}^i \otimes\theta_\mathfrak{q} }\cong R/(\sigma_2^2 -4\sigma_1)$ for $1\leq i \leq m-2$. From calculations similar to those above, the 2 dimensional characters ${\Yvcentermath1 \yng(1)}_{i}{\Yvcentermath1 \yng(1)}_{j}$ for $0 \leq i < j <m-1$ each give a copy of $(R/(4\sigma_1-\sigma_2^2))^2$. There are $m-1 \choose 2$ of them. Then $M_{\rep{i}{m-1}} \cong X^2$ for $0 \leq i < m-1$. 
\end{proof}
\begin{corollary}
Let $G=G(m,1,2)$ then $\text{End}_{R/(\Delta)}(S/(z))$ is an NCR for $R/(\Delta)$. 
\end{corollary}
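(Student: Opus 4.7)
The plan is to deduce this corollary formally from Theorem \ref{first} together with the Auslander-type result of \cite{FiniteGlobal} quoted in the introduction. First I would recall that, via the coordinate change noted in the remark just before Theorem \ref{first}, the ring $R/(\Delta)$ is an $A_3$ curve singularity, so (working in the graded setting, cf.\ Remark \ref{grade}) it is of finite $CM$-type with complete list of non-isomorphic graded indecomposable $CM$-modules $\{R/(\Delta),\, A,\, B,\, X\}$, as recalled from \cite{YujiCM} just above.

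Next I would read off from Theorem \ref{first} that every member of this list occurs with strictly positive multiplicity as a direct summand of $S/(z)$, since the decomposition
\[
S/(z)\;\cong\; R/(\Delta)\,\oplus\, A\,\oplus\, B^{\,2\binom{m-1}{2}+m-1}\,\oplus\, X^{\,2m-2}
\]
displays each indecomposable at least once. Consequently $\add_{R/(\Delta)}(S/(z)) = \add_{R/(\Delta)}(R/(\Delta)\oplus A\oplus B\oplus X)$, so the two endomorphism rings $\mathrm{End}_{R/(\Delta)}(S/(z))$ and $\mathrm{End}_{R/(\Delta)}(R/(\Delta)\oplus A\oplus B\oplus X)$ are Morita equivalent via the standard $\mathrm{Hom}(-,?)$ equivalence between $\add$-closures and categories of finitely generated projectives. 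Faithfulness of $S/(z)$ over $R/(\Delta)$ comes for free because $R/(\Delta)$ itself is a summand.

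To finish I would apply the theorem from \cite{FiniteGlobal} recalled in the introduction: the endomorphism ring of the direct sum of a complete list of non-isomorphic indecomposable $CM$-modules over a ring of finite $CM$-type has finite global dimension. Since Morita equivalence preserves global dimension, $\mathrm{End}_{R/(\Delta)}(S/(z))$ also has finite global dimension, and together with the faithfulness observed above this is precisely the definition of an NCR for $R/(\Delta)$. I do not foresee any serious obstacle: the only substantive input is the completeness of the list of indecomposable summands of $S/(z)$, which is exactly the content of Theorem \ref{first}, so once that is in hand the corollary is an essentially formal consequence.
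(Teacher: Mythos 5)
Your argument is exactly the one the paper intends: the corollary is stated without proof precisely because it follows from Theorem \ref{first} (which exhibits every indecomposable $CM$-module of the $A_3$ singularity $R/(\Delta)$ as a summand of $S/(z)$) combined with the Auslander-type theorem from \cite{FiniteGlobal} quoted in the introduction, with Morita invariance of global dimension handling the multiplicities. Your write-up is correct and fills in the routine details (faithfulness, the $\add$-equivalence) that the paper leaves implicit.
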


\section{Irreducible representations of \(G(m,p,n)\)}
The irreducible representations of $G(m,p,n)$ were first described by Stembridge, see \cite{stembridge1989}. Let $G=G(m,1,n)$ and $H=G(m,p,n)$, $q:=\frac{m}{p}$, we define a linear character of $G$ by:

\[\delta_i=(-,\dots,{\Yvcentermath1\yng(1)\cdots \yng(1)},\dots,-)\]

It is straightforward to check that the action $-\otimes \delta_i$ "shifts" the diagrams in a $m$-tuple of young diagrams $i$ places to the right, i.e if $\lambda=(\lambda^{(0)},\dots, \lambda^{(m-1)})$ then $\lambda\otimes \delta_i=(\lambda^{(0-i)},\dots, \lambda^{(m-1-i)})$ where ${j-i}$ is considered mod $m$. The quotient group $G/H$ is isomorphic to the cyclic group $C=\langle \delta_1^q \rangle$. Through this $G/H$ acts on the irreducible representations of $G$. Denote by $[\lambda]$ the $G/H$-orbit of the irreducible $G$-representation $\lambda$. Define an equivalence relation $\sim_H$ on Irr$(G)$ such that $\lambda\sim_H \mu$ if and only if $\lambda = \mu \otimes \delta$ for some $\delta$ in $G/H$.

We define some numerology, let $b(\lambda)$ be the cardinality of the $G/H$ orbit of $\lambda$ and $u(\lambda):=\frac{p}{b(\lambda)}$. Define the stabilizer of $\lambda$ as the subgroup of $G/H$:

\[(G/H)_\lambda:=\{\delta \in G/H : \lambda \otimes \delta = \lambda\}\]

This is then generated by $\delta_1^{b(\lambda)\cdot q}$ with an order of $u(\lambda)$. 

\begin{example}\label{stabex}
Let $G=G(4,1,2)$ and $H=G(4,2,2)$. Let $\lambda_1=({\Yvcentermath1\yng(1)},{\Yvcentermath1\yng(1)},-,-),$ $\lambda_2=({\Yvcentermath1\yng(1)},-,{\Yvcentermath1\yng(1)},-)$ $\lambda_3=(-,{\Yvcentermath1\yng(1)},-,{\Yvcentermath1\yng(1)})$ and $\lambda_4=(-,-,{\Yvcentermath1\yng(1)},{\Yvcentermath1\yng(1)})$. The quoitient group $G/H$ is generated by $\delta_1^2$, and for example $\lambda_1\otimes \delta_1^2=\lambda_4$. Then $(G/H)_{\lambda_1}=\{1\}$, $(G/H)_{\lambda_2}=\{1,\delta_1^2\}$. $ [\lambda_1]=\{\lambda_1,\lambda_4\}$, $[\lambda_2]=[\lambda_2]$ and  $[\lambda_3=\lambda_3]$
\end{example}

\begin{theorem}(Stembridge \cite{stembridge1989})
There is a one-one correspondence between the irreducible representations $H=G(m,p,n)$ and ordered pairs $([\lambda],\delta)$ where $[\lambda]$ is the orbit of an irreducible representation $\lambda$ of $G$ and $\delta\in (G/H)_\lambda$. Then the following hold:
\begin{itemize}
    \item[a)] $\text{Res}^G_H(\lambda)=\text{Res}^G_H(\mu)$ for $\lambda\sim_H\mu$.
    \item[b)] $\text{Res}^G_H(\lambda)=\bigoplus_{\delta\in(G/H)_\lambda}([\lambda],\delta)$.
    \item[c)] $\text{Ind}^G_H([\lambda],\delta)=\bigoplus_{\mu \text{ s.t } \mu \sim_H \lambda} \mu$.
\end{itemize}
\end{theorem}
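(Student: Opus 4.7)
\emph{Proof plan.} The result is a standard application of Clifford theory to the normal inclusion $H\triangleleft G$ with cyclic quotient $G/H$ of order $p$. The plan is to prove (a) by direct inspection, then to derive a projection-type identity which, combined with the multiplicity-freeness coming from cyclicity, yields (b), and finally to deduce (c) from (a) and (b) via Frobenius reciprocity.

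Part (a) is immediate: $\mu\sim_H\lambda$ means $\mu\cong\lambda\otimes\delta$ for some $\delta\in G/H$ regarded as a linear character of $G$ via inflation, and any such $\delta$ restricts trivially to $H$. For the central identity, the projection formula together with $\ind^G_H\mathbf{1}_H\cong\bigoplus_{\delta\in G/H}\delta$ (valid because $G/H$ is abelian, so its regular representation is the sum of its characters, which inflate to the $\delta\in G/H$) gives
\[
\ind^G_H\res^G_H\lambda\;\cong\;\lambda\otimes\ind^G_H\mathbf{1}_H\;=\;\bigoplus_{\delta\in G/H}\lambda\otimes\delta\;=\;u(\lambda)\bigoplus_{\mu\in[\lambda]}\mu.
\]
Frobenius reciprocity turns this into $\langle\res^G_H\lambda,\res^G_H\lambda\rangle_H=u(\lambda)$, and by the general Clifford theorem $\res^G_H\lambda=e\sum_{i=1}^{k}W_i$ for a single $G$-orbit of distinct $H$-irreducibles $\{W_i\}$, so that $ke^2=u(\lambda)$.

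The main technical obstacle is to show that the ramification index $e$ equals $1$. The inertia subgroup $I_G(W_i)$ sits between $H$ and $G$, so $I_G(W_i)/H$ is cyclic and its Schur multiplier $H^2(I_G(W_i)/H,\mathbb{C}^\times)$ vanishes; consequently $W_i$ extends to an irreducible representation $\widetilde W_i$ of $I_G(W_i)$, and standard Clifford theory then realises $\lambda\cong\ind^G_{I_G(W_i)}\widetilde W_i$ with ramification $e=1$. Hence $k=u(\lambda)=|(G/H)_\lambda|$, and the $W_i$ are labelled by $(G/H)_\lambda$ via the simultaneous eigenspaces on $\res^G_H\lambda$ of the commuting $H$-equivariant operators induced by the chosen isomorphisms $\lambda\cong\lambda\otimes\delta$ for $\delta\in(G/H)_\lambda$, using cyclic self-duality to identify characters of $(G/H)_\lambda$ with $(G/H)_\lambda$ itself. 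This proves (b); then (c) follows by Frobenius reciprocity, as $\langle\ind^G_H([\lambda],\delta),\mu\rangle_G=\langle([\lambda],\delta),\res^G_H\mu\rangle_H$ equals $1$ if $\mu\in[\lambda]$ and $0$ otherwise. A dimension check $\sum_{[\lambda],\delta}(\dim\lambda/u(\lambda))^2=\sum_\lambda(\dim\lambda)^2/p=|G|/p=|H|$ finally confirms that the constructed pairs $([\lambda],\delta)$ exhaust $\mathrm{Irr}(H)$.
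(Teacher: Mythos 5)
Your proof is correct. The paper itself offers no argument for this statement --- it is quoted directly from Stembridge --- so there is nothing internal to compare against; what you have written is essentially the standard Clifford-theory derivation for a normal subgroup with cyclic quotient, which is also the route Stembridge takes. All the key steps check out: the projection formula $\ind^G_H\res^G_H\lambda\cong\lambda\otimes\ind^G_H\mathbf{1}_H$ combined with Frobenius reciprocity correctly gives $\langle\res^G_H\lambda,\res^G_H\lambda\rangle_H=u(\lambda)=ke^2$; the vanishing of $H^2$ of the cyclic inertia quotient correctly forces $e=1$ via the extension-plus-Mackey argument; and your closing dimension count $\sum_{[\lambda]}u(\lambda)\bigl(\dim\lambda/u(\lambda)\bigr)^2=\sum_{\lambda}(\dim\lambda)^2/p=|H|$ does establish exhaustion, since constituents over distinct $\sim_H$-classes are disjoint by part (c). The only point worth flagging is notational: you (like the paper) silently identify $G/H$ with its character group $\widehat{G/H}$, and the labelling of the constituents by elements of $(G/H)_\lambda$ rather than by its characters depends on a non-canonical self-duality; since the theorem only asserts a parametrization, this is harmless, but it deserves a sentence of acknowledgement.
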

\begin{example}
Following from Example \ref{stabex}, $\mathrm{Res}^G_H\lambda_1=[\lambda_1]=\mathrm{Res}^G_H\lambda_4$ and $\mathrm{Res}^G_H\lambda_2=([\lambda_2],1) \oplus ([\lambda_2],\delta_1^2)$
\end{example}

We can fully describe the irreducible representations of $G(m,p,2)$ in the following way. Let $G=G(m,1,2)$ and $H=G(m,p,2)$ where $q=\frac{m}{p}$. Recall that the representations of $G(m,1,2)$ are of the following form:

   \begin{itemize}
    \item The one dimensional representations corresponds to the $m$-tuple of diagrams; ${\Yvcentermath1 \yng(2)}_i$ or ${\Yvcentermath1 \yng(1,1)}_i$ for $0\leq i \leq m-1$. 
    
    \item The two dimensional representations corresponds to the $m$-tuple of diagrams; $\yng(1)_i\yng(1)_j$, where $0\leq i<j \leq m-1$.
\end{itemize}
The one-dimensional representations will again stay irreducible under restriction. The restriction of a two dimensional representation $\lambda$ is reducible if $(G/H)_\lambda$ is non-trivial, that means that there exists a $\delta \in G/H$, i.e $\delta=\delta_1^{q\cdot c}$ for some $c \in \{0,..,p-1\}$, such that $ \lambda\otimes\delta = \lambda$. Let $\lambda = \yng(1)_i\yng(1)_j$ then $  \lambda\otimes\delta_1^{q\cdot c}=\yng(1)_{cq+i}\yng(1)_{cq+j}$, then for $\text{Res}^G_H\lambda$ to be reducible $cq+i=j$ and $cq+j=m+i$. Thus $2cq=m$ and $cq =\frac{m}{2}$, also since $q=\frac{m}{p}$, $c\frac{m}{p}=\frac{m}{2}, 2c=p$ thus $2|p$. 

From this if $m$ is odd, or even with $p$ odd, all of the two dimensional representation stay irreducible. If $m$ is even and $p$ is even the only two-dimensional representations that will not be irreducible are $\yng(1)_i\yng(1)_{\frac{m}{2}+i}$ for $0\leq i < \frac{m}{2}$.

\begin{remark}
In the preceding example, we saw for $G=G(m,1,2)$ and $H=G(m,p,2)$ with $q=\frac{m}{p}$ that only when $p$ is even there are then two-dimensional representations that will split under restriction. These were given by $\yng(1)_i\yng(1)_{\frac{m}{2}+i}$ for $0\leq i < \frac{m}{2}$. 
\end{remark}
\subsection{Higher Specht Polynomials for \(G(m,p,2)\)}
Let $G=G(m,1,n)$ and $H=G(m,p,n)$, where $q=\frac{m}{p}$, Let $S_H$ be the coinvariant algebra for $H$. Let the operation $- \otimes \delta_1$ on the $m$-tuples of tableau defined above, be denoted by $sh$, for convenience denote by $Sh$ the operation $sh^q$. Note that $b(\lambda)$ is the smallest integer $j$ such that $Sh^{j}(\lambda)=\lambda$. For $h=1,...,m$ define

\[\mathrm{STab}(\lambda)_h=\{T=(T^{(0)},...,T^{(m-1)}) \in \mathrm{STab}(\lambda): 1 \in T^{(v)} \text{ for } 0 \leq v < h  \}\]

\begin{remark}
If $T\in \mathrm{STab}_q(\lambda)$ then $T,Sh^{b(\lambda)}(T),...,Sh^{(u(\lambda-1)b(\lambda)}(T)$ are all distinct 
\end{remark}
\begin{definition}
Let $\lambda=(\lambda^{(0)},...,\lambda^{(r-1)}) \in \mathcal{P}_{n,r}$, let $Q \in \mathrm{STab}(\lambda)$ and for $0\leq l \leq u(\lambda)-1$:

\[\Delta^{(l)}_{Q,T}(x):= \sum_{m=0}^{u(\lambda)-1}\xi^{lmdb(\lambda)}\Delta_{Q,Sh^{mb(\lambda)}(T)}(x) \]
\end{definition} 

\begin{lemma}\cite{morita1998}
Let $Q,T$ be standard $m$-tableau of shape $\lambda$ and let $l = 0,...,u(\lambda)-1$. Then the polynomial $\Delta_{Q,T}(x)$ is non-zero in $S_H$ if and only $Q\in \mathrm{STab}(\lambda)_q$.
\end{lemma}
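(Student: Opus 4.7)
The starting point is to realise $S_H$ as a quotient of the coinvariant algebra $S_G$ of the larger group $G=G(m,1,n)$. A short comparison of basic invariants shows $S^G_+=(e_1,\ldots,e_n)$ where $e_i=e_i(x_1^m,\ldots,x_n^m)$, while $S^H_+=(e_1,\ldots,e_{n-1},(x_1\cdots x_n)^q)$; since $e_n=((x_1\cdots x_n)^q)^p$ lies in the latter ideal, one obtains
\[
S_H \;\cong\; S_G\,/\,\bigl((x_1\cdots x_n)^q\bigr).
\]
Thus the lemma is equivalent to deciding when the image of $\Delta_{Q,T}(x)$ in $S_G$ lies in the principal ideal generated by $(x_1\cdots x_n)^q$.

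To prove the ``only if'' direction I would exploit that $(x_1\cdots x_n)^q$ is a relative invariant for the linear character $\delta_q$, so multiplication by it is a $G$-equivariant map (twisted by $\delta_q$) sending the $\mu$-isotypical component of $S_G$ into the $Sh(\mu)=\mu\otimes\delta_q$-isotypical one. Concretely, when the entry $1$ sits in $Q^{(v)}$ with $v\geq q$, both $Sh^{-1}(Q)$ and $Sh^{-1}(T)$ are genuine standard tableaux of shape $Sh^{-1}(\lambda)$, and I would verify directly from the higher Specht formula a congruence
\[
\Delta_{Q,T}(x) \;\equiv\; (x_1\cdots x_n)^q\cdot\Delta_{Sh^{-1}(Q),\,Sh^{-1}(T)}(x)\pmod{S^G_+},
\]
forcing $\Delta_{Q,T}$ to vanish in $S_H$. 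Bookkeeping: the shift absorbs the extra factor $x_k^q$ at each cell $k$, and the index function $i(\cdot)$ is unchanged under the relevant shift; whenever a positional label $a+q$ passes $m-1$, one reduces the surplus $x_k^m$ against the power-sum relations on $x_i^m$ living in $S^G_+$.

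For the converse I would argue by a dimension count. Every $\Delta_{Q,T}$ with $Q\notin\mathrm{STab}(\lambda)_q$ is zero in $S_H$ by the previous paragraph, so $S_H$ is spanned by the images of those $\Delta_{Q,T}$ with $Q\in\mathrm{STab}(\lambda)_q$. Summing over $\lambda\in\mathcal{P}_{m,n}$ produces $\sum_\lambda|\mathrm{STab}(\lambda)_q|\cdot|\mathrm{STab}(\lambda)|$ candidate basis elements; on the other hand, using Stembridge's correspondence for $\mathrm{Irr}(H)$ together with $\dim([\lambda],\delta)=\dim(\lambda)/u(\lambda)$, the dimension of $S_H\cong\mathbb{C}H$ equals $|H|=m^n n!/p$. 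Verifying that these two numbers match shows the spanning set is in fact a basis, so none of the selected polynomials is zero.

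The main obstacle is the explicit congruence relating $\Delta_{Q,T}$ to $(x_1\cdots x_n)^q\cdot\Delta_{Sh^{-1}(Q),Sh^{-1}(T)}$ modulo $S^G_+$: the Young symmetrizer $e_{T^{(a)}}$ does not obviously commute with multiplication by $(x_1\cdots x_n)^q$, and the cyclic re-indexing of positions past $m-1$ forces one to trade factors of $x_k^m$ against invariant relations in a controlled way. Once this technical congruence is established, the dimension-counting step closes the argument cleanly without any direct linear-independence computation.
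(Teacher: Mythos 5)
The paper offers no proof of this lemma: it is quoted verbatim from Morita--Yamada \cite{morita1998}, so there is no in-paper argument to measure you against. Judged on its own, your strategy is the right one and is essentially how the cited result is established: identify $S_H\cong S_G/((x_1\cdots x_n)^q)$ (correct, since $e_n(x_1^m,\dots,x_n^m)=((x_1\cdots x_n)^q)^p$ already lies in the ideal generated by the basic invariants of $H$), kill the $\Delta_{Q,T}$ with $Q\notin\mathrm{STab}(\lambda)_q$ by exhibiting them in that principal ideal, and recover the converse from the count $\sum_\lambda|\mathrm{STab}(\lambda)_q|\,|\mathrm{STab}(\lambda)|=|H|$. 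The count does check out: summing $|\mathrm{STab}(\mu)_q|$ over the $Sh$-orbit of $\lambda$ gives $\tfrac{b(\lambda)}{p}|\mathrm{STab}(\lambda)|$ (partition $\mathrm{STab}(\lambda)$ by which block of $q$ consecutive positions contains the entry $1$), so the total is $|G|/p=|H|$, and since the images of all $\Delta_{Q,T}$ span $S_H$ while those with $Q\notin\mathrm{STab}(\lambda)_q$ vanish, the survivors form a basis and are in particular nonzero.

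The gap you flag -- the divisibility congruence -- is real but closes more cleanly than you anticipate, and one of your bookkeeping claims is actually false: the index function is \emph{not} unchanged under the shift. For a cell $C$ put $\epsilon(C)=1$ if $C$ lies in a component of position $<q$ and $\epsilon(C)=0$ otherwise. Since $i(p)=\#\{j<p:\ j+1\text{ lies to the left of }j\text{ in }w(Q)\}$ and the cyclic shift reverses the relative order of $j$ and $j+1$ exactly when precisely one of them has $\epsilon=1$, a telescoping sum gives $i(Sh^{-1}Q)(C)=i(Q)(C)+\epsilon(\text{cell of }1)-\epsilon(C)$; the hypothesis $Q\notin\mathrm{STab}(\lambda)_q$ enters precisely by forcing $\epsilon(\text{cell of }1)=0$, so $i(Q)(C)=i(Sh^{-1}Q)(C)+\epsilon(C)$. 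On the other hand the positional label of $C$ transforms as $a\mapsto a-q+m\,\epsilon(C)$, so the exponent contributed by $C$ to $(x_1\cdots x_n)^q\Delta_{Sh^{-1}Q,Sh^{-1}T}$ is $q+(a-q+m\epsilon(C))+m\,i(Sh^{-1}Q)(C)=a+m\,i(Q)(C)$, exactly the exponent in $\Delta_{Q,T}$. Since $(x_1\cdots x_n)^q$ is symmetric in the variables of each component it commutes with the Young symmetrizers, and one gets the identity
\[
\Delta_{Q,T}(x)=(x_1\cdots x_n)^q\,\Delta_{Sh^{-1}(Q),\,Sh^{-1}(T)}(x)
\]
on the nose -- no reduction modulo $S^G_+$ and no trading of $x_k^m$ against invariant relations is needed (you can check this against the paper's rank-two computations, e.g.\ $\Delta_{T,T}=x_2^{i+m}x_1^{j}=(x_1x_2)^q\cdot x_1^{j-q}x_2^{i+m-q}$). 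With this correction your proposal is a complete and correct proof; as written, the "index unchanged" assertion and the claimed need for reduction modulo the invariants are the only inaccuracies.
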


\begin{example}
Let $\lambda=\rep{i}{j}$, where $0\leq i <j < q$ and $Q=\repg{1}{i}{2}{j}, T= \repg{2}{i}{1}{j}$, then $u(\lambda)=1$ since the only case with $u(\lambda)\neq 1$ is when $j=\frac{m}{2}+i$. Moreover this means $\Delta_{Q,T}^{(0)}= \Delta_{Q,T}=x_2^ix_2^j$ , $\Delta_{Q,Q}^{(0)}= \Delta_{Q,Q}=x_1^ix_2^j$, $\Delta_{T,Q}^{(0)}= \Delta_{T,Q}=x_1^{i+m}x_2^j$ and $\Delta_{T,T}^{(0)}= \Delta_{T,T}=x_2^{i+m}x_1^{j}$. Then since both $Q,T$ are in STab$(\lambda)_q$, all these Specht polynomials are non-zero in $S_H$. $V_Q(x) = \mathbb{C}\Delta_{Q,T}\oplus \mathbb{C}\Delta_{Q,Q}$ and $V_T(x) = \mathbb{C}\Delta_{T,Q}\oplus \mathbb{C}\Delta_{T,T}$. 
\end{example}
\begin{example}\label{diff}
Let $\lambda=\rep{i}{j}$, where $0\leq i < q \leq j \leq m-1$ with $\mathrm{Res}^G_H\lambda$ irreducible, i.e $j\neq i+\frac{m}{2}$ and $Q=\repg{1}{i}{2}{j}, T= \repg{2}{i}{1}{j}$, then $u(\lambda)=1$. Moreover this means $\Delta_{Q,T}^{(0)}= \Delta_{Q,T}=x_2^ix_2^j$ , $\Delta_{Q,Q}^{(0)}= \Delta_{Q,Q}=x_1^ix_2^j$, $\Delta_{T,Q}^{(0)}= \Delta_{T,Q}=x_1^{i+m}x_2^j$ and $\Delta_{T,T}^{(0)}= \Delta_{T,T}=x_2^{i+m}x_1^{j}$. Then since both $Q$ is in STab$(\lambda)_q$, $\Delta_{Q,T}$ and $\Delta_{Q,Q}$ are non-zero in $S_G$. Now $T$ is not in STab$(\lambda)_q$ and thus is zero in $S_G$. Let $j = cq+r$ then consider $\mu=\rep{r}{m+i-cq}$. Let $P=\repg{1}{r}{2}{m+i-cq}$ and $Q=\repg{2}{r}{1}{m+i-cq}$. Now $P\in \mathrm{STab}(\mu)_q$ and $Q \not\in \mathrm{STab}(\mu)_q$ thus the Specht polynomials $\Delta_{P,P},\Delta_{P,Q}$ are non-zero while $\Delta_{Q,P},\Delta_{Q,Q}$ are zero in $S_G$.
\end{example}
\begin{theorem}[Morita, Yamada \cite{morita1998}]\label{Spmnp} Let $\lambda=(\lambda^{(0)},...,\lambda^{(r-1)}) \in \mathcal{P}_{n,r}$. For each $S \in Stab(\lambda)$ and $0\leq l \leq u(\lambda)-1$, put $V^{(l)}_S(x)=\bigoplus_{T \in Stab(\lambda)} \mathbb{C}\Delta^{(l)}_{S,T}(x)$. Then:
\begin{itemize}
    \item The space $V_S(x)$ decomposes as $V_S(x)= \bigoplus^{u(\lambda)-1}_{l=0} V^{(l)}_S(x)$.
    \item The space $V^{(l)}_S(\lambda)$ affords an irreducible representation of $G(m,r,n)$.
    \item The $G(m,p,n)$-module $S_H$ admits an irreducible decomposition:
    
    \[S_H= \bigoplus_{\lambda} \bigoplus_{S \in \mathrm{STab}(\lambda)_d} \bigoplus_{l=0}^{u(\lambda)-1} V_S^{(l)}\]
 
\end{itemize}
\end{theorem}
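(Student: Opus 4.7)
The plan is to bootstrap from the Ariki--Terasoma--Yamada decomposition of the coinvariant algebra $S_G$ for $G=G(m,1,n)$ (the previous theorem in the excerpt) and use it to produce the decomposition for $H=G(m,p,n)$, exploiting that $H$ is a normal subgroup of $G$ with cyclic quotient $G/H=\langle Sh\rangle$ of order $p$. The first step is to observe that the shift $Sh$ acts on the Ariki--Terasoma--Yamada summand $V_S(\lambda)\subset S_G$ by sending $\Delta_{S,T}$ to a scalar multiple of $\Delta_{S,Sh(T)}$, because the defining formula for $\Delta_{S,T}$ only ``reads'' $T$ through its shape-data and the exponents indexing the variables. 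Consequently each $V_S(\lambda)$ is stable under the stabilizer $(G/H)_\lambda=\langle Sh^{b(\lambda)}\rangle$ of order $u(\lambda)$.

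Next I would diagonalize this cyclic action. Since $(G/H)_\lambda\cong C_{u(\lambda)}$, the space $V_S(\lambda)$ splits as a direct sum of $u(\lambda)$ character isotypic components, and the linear combinations $\Delta^{(l)}_{S,T}$ are precisely the discrete Fourier transform with respect to the character $\xi^{lb(\lambda)\,\cdot}$: a direct computation using $Sh^{b(\lambda)u(\lambda)}=\mathrm{id}$ and $\sum_{m}\xi^{lmdb(\lambda)}\xi^{-l'mdb(\lambda)}=u(\lambda)\delta_{l,l'}$ establishes the first bullet $V_S(x)=\bigoplus_{l=0}^{u(\lambda)-1}V^{(l)}_S(x)$. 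For the second bullet, I would apply Stembridge's theorem recalled above: $\mathrm{Res}^G_H V_S(\lambda)=\bigoplus_{\delta\in(G/H)_\lambda}([\lambda],\delta)$, and the $\xi^{lb(\lambda)}$-eigenspace of $Sh^{b(\lambda)}$ on $V_S(\lambda)$ has to be the summand labelled by the character $\delta\mapsto\xi^{lb(\lambda)}$, so $V^{(l)}_S$ affords the irreducible $([\lambda],\delta_l)$ of $H$.

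For the third bullet, one passes from $S_G$ to $S_H$. The inclusion $R^G\subset R^H$ makes $S_H$ a quotient of $S_G$, and the question is which of the basis elements $\Delta^{(l)}_{S,T}$ survive this quotient. Here I would invoke the preceding lemma of Morita--Yamada: $\Delta_{Q,T}$ is nonzero in $S_H$ if and only if $Q\in\mathrm{STab}(\lambda)_q$, which forces the restriction of the outer index of the direct sum to $S\in\mathrm{STab}(\lambda)_q$ (the subscript $d$ in the display should read $q$). The remaining task is a dimension count: the coinvariant algebra $S_H$, as an $H$-module, is isomorphic to the regular representation, so each irreducible $([\lambda],\delta_l)$ of $H$ must appear with multiplicity equal to its dimension, and that dimension is exactly $|\mathrm{STab}(\lambda)_q|$ by Stembridge combined with the Ariki description; matching these two counts confirms the decomposition is complete with no extra relations.

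The hard part will be the vanishing/non-vanishing analysis: one must carefully check that the DFT of the $\Delta_{S,T}$ with $S\in\mathrm{STab}(\lambda)_q$ remains non-zero in $S_H$ (and not just in $S_G$), because individual shifted tableaux $Sh^{mb(\lambda)}(T)$ may move $T$ into or out of the ``$1$ lies in the first $q$ components'' condition, and their images in $S_H$ must be tracked consistently. The cleanest way to handle this is to identify $S_H$ with the $(G/H)$-isotypic decomposition of $S_G$ under the external action coming from the normal-subgroup structure, apply Frobenius reciprocity to identify characters, and then read off that the only contributions come from $S\in\mathrm{STab}(\lambda)_q$, since otherwise the Fourier transforms produce a non-trivial $(G/H)$-character that lies in $(R^H_+)S_G$ when expressed in terms of $H$-invariants.
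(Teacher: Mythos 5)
First, a point of order: the paper does not prove this statement at all --- it is quoted from Morita--Yamada \cite{morita1998} and used as a black box, so there is no internal proof to compare yours against. Judged on its own terms, your outline follows what is essentially the standard Clifford-theoretic route (and, as far as one can tell, the route of the cited source): start from the higher Specht basis of $S_G$ for $G=G(m,1,n)$ from \cite{ariki1997}, use Stembridge's theorem to know a priori how $\mathrm{Res}^G_H V_\lambda$ splits for the normal subgroup $H=G(m,p,n)$ with cyclic quotient, realise that splitting concretely as a discrete Fourier transform over the stabiliser $(G/H)_\lambda$, and then pass from $S_G$ to its quotient $S_H$ via the vanishing lemma plus a dimension count. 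That is the right skeleton, and you correctly isolate the genuinely delicate point, namely which $\Delta^{(l)}_{S,T}$ survive in $S_H$.

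Two places need more care. First, $Sh$ is an operation on tuples of tableaux (equivalently, twisting by the linear character $\delta_1^{q}$); it is not an element of $G$ acting on $S_G$. So ``diagonalising the action of $Sh^{b(\lambda)}$ on $V_S(\lambda)$'' presupposes an actual linear operator on $V_S(\lambda)$, commuting with $H$, that sends $\Delta_{S,T}$ to a scalar multiple of $\Delta_{S,Sh^{b(\lambda)}(T)}$. Such an intertwiner exists because $V_\lambda\otimes\delta_1^{qb(\lambda)}\cong V_\lambda$ as $G$-representations and $\delta_1^{qb(\lambda)}$ is trivial on $H$, but it must be exhibited explicitly --- this is precisely where the concrete form of the higher Specht polynomials enters --- rather than asserted from ``the formula only reads $T$ through its shape-data''. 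Second, your closing dimension count is off as stated: $\dim([\lambda],\delta_l)=|\mathrm{STab}(\lambda)|/u(\lambda)$, which need not equal $|\mathrm{STab}(\lambda)_q|$ for an individual $\lambda$; for instance, a tuple whose nonempty components all sit in positions $\geq q$ has $\mathrm{STab}(\lambda)_q=\emptyset$ even though its restriction to $H$ is a nonzero irreducible. The multiplicity only matches the dimension after summing $|\mathrm{STab}(\mu)_q|$ over the whole $\sim_H$-orbit of $\lambda$, which is exactly why the outer sum in the displayed decomposition runs over all $\lambda$ rather than over orbit representatives. With those two repairs the sketch is a faithful reconstruction of the cited proof, and you are right that the subscript $d$ in the display should read $q$.
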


 \begin{example}
 Continuing Example \ref{diff},
the vector space $V_P(x)$ then stays irreducible and is isomorphic to $V_S(x)$ since $\lambda \sim_H \mu$. Thus a basis for the isotypical component $S^H_\lambda$ of $S_H$ of type $\lambda$ is:

\[\{\Delta^{(0)}_{S,S},\Delta^{(0)}_{S,T}, \Delta^{(0)}_{Q,P},\Delta^{(0)}_{Q,Q}\}\]
 \end{example}

\section{\(S/(z)\) for \(G(m,p,2)\)}\label{Mainse}

As before, let $G=G(m,1,2)$, $H=G(m,p,2)$ with $m = pq$, $S=\mathbb{C}[x,y]$, $R=S^H \cong \mathbb{C}[\sigma_1,\sigma_2]$ where $\sigma_1=x^m+y^m,$ $\sigma_2=(xy)^q$. For the case $p \neq m$, recall that $z=xy(x^m-y^m)$ and $\Delta=\sigma_2(\sigma_1^2-4\sigma_1^p)$.

\begin{lemma}
The representation $\det_H=\text{Res}^G_H\det_G$.
\end{lemma}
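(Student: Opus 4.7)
The statement is essentially a definitional matter, so my plan is to unpack the two characters involved and verify they agree, using the explicit matrix realisation of both groups.

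First I would recall that both $G = G(m,1,2)$ and $H = G(m,p,2)$ are concretely realised inside $GL(V)$ with $V = \mathbb{C}^2$, and that $H \subseteq G$ via the very same embedding (the matrices $PD$ satisfying the extra condition $\det(D)^{m/p} = 1$). For any finite subgroup $K \subseteq GL(V)$, the character $\det_K \in \Hom(K,\mathbb{C}^\times)$ is by definition $k \mapsto \det(k)$, the determinant of $k$ as a $2\times 2$ matrix. Since the representation of $H$ on $V$ is literally the restriction of the representation of $G$ on $V$, we have $\det_H(h) = \det(h) = \det_G(h) = (\mathrm{Res}^G_H \det_G)(h)$ for every $h \in H$, which gives the claim.

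As a consistency check under the Young-tableau bookkeeping used in the paper, I would then verify that the linear character of $G$ assigned to the $m$-tuple $(-,\yng(1,1),-,\dots,-)$ restricts to the correct character of $H$. By the earlier lemma this character sends $s_1 \mapsto \xi_m$ and $s_2 \mapsto -1$, which are exactly $\det(s_1)$ and $\det(s_2)$. Any element of $H$ is a word in the generators $s_1,\dots,s_n$ of $G$ (subject to the normal-subgroup condition on the diagonal determinant), and since $\det$ is multiplicative, evaluating $\mathrm{Res}^G_H \det_G$ on such a word gives the same answer as $\det_H$ on that matrix.

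There is no real obstacle here: the only potential subtlety would be a conflict between the purely representation-theoretic definition of $\det_H$ (as a one-dimensional character singled out via Stembridge's classification) and the geometric one (determinant as a matrix), but the paper uses $\det$ throughout for the determinant of the defining reflection representation, so the two notions coincide tautologically. Accordingly I expect the written proof to be a single short line, simply noting that both sides are the character $h \mapsto \det(h)$ on $H$.
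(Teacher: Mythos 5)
Your proposal is correct and matches the paper's proof, which is the same one-line observation that both characters are $h\mapsto\det(h)$ for $h\in H$ under the common embedding into $GL(V)$. The extra consistency check via the Young-tableau realisation is harmless but not needed.
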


\begin{proof}
Obvious, since $\det$ is one dimensional and for $h \in H$, $\mathrm{Res}^G_H\det(h)=\det(h)$.
\end{proof}

\begin{lemma}
Let $0\leq i < j< m$ be such that $\mathrm{Res}^G_H \rep{i}{j}$ is irreducible then $\mathrm{Res}^G_H \rep{i}{j}\otimes \det_H=\mathrm{Res}^G_H\rep{i+1}{j+1}$ where $i+1$, $j+1$ are taken mod $m$ and is irreducible.
\end{lemma}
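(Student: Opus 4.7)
The plan is to reduce the lemma to the analogous identity for $G$ already established in Section 6, namely $\rep{i}{j} \otimes \det_G = \rep{i+1}{j+1}$ (indices modulo $m$), together with the preceding lemma $\det_H = \mathrm{Res}^G_H \det_G$. Since restriction is a monoidal functor, I would compute
\begin{equation*}
\mathrm{Res}^G_H \rep{i}{j} \otimes \det_H \;=\; \mathrm{Res}^G_H \rep{i}{j} \otimes \mathrm{Res}^G_H \det_G \;=\; \mathrm{Res}^G_H\bigl(\rep{i}{j}\otimes \det_G\bigr) \;=\; \mathrm{Res}^G_H \rep{i+1}{j+1},
\end{equation*}
which gives the stated equality immediately.

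For irreducibility, Stembridge's theorem tells us that an irreducible $G$-representation $\mu$ restricts irreducibly to $H$ exactly when the stabilizer $(G/H)_\mu$ is trivial, so the task reduces to showing $(G/H)_{\rep{i+1}{j+1}}$ is trivial. I would exploit that $-\otimes \det_G$ and $-\otimes \delta$ for $\delta \in G/H \subset \mathrm{Hom}(G,\mathbb{C}^\times)$ commute as operations on $\mathrm{Irr}(G)$ (the character group is abelian) and that $-\otimes \det_G$ is a bijection of $\mathrm{Irr}(G)$. This yields
\begin{equation*}
\rep{i+1}{j+1} \otimes \delta \;=\; (\rep{i}{j} \otimes \det_G) \otimes \delta \;=\; (\rep{i}{j} \otimes \delta) \otimes \det_G,
\end{equation*}
so $\delta$ fixes $\rep{i+1}{j+1}$ if and only if $\delta$ fixes $\rep{i}{j}$. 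Hence $(G/H)_{\rep{i+1}{j+1}} = (G/H)_{\rep{i}{j}}$, which is trivial by hypothesis.

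There is no serious obstacle here: once the commutativity of tensor product with restriction is noted, the statement follows formally from properties of tensoring by linear characters. The only thing requiring mild care is the index wrap-around convention (e.g.\ if $j = m-1$ then $\rep{i+1}{j+1}$ must be reinterpreted as $\rep{0}{i+1}$ to respect the convention $a<b$ in $\rep{a}{b}$), but this is purely notational since the underlying $m$-tuple of Young diagrams, and hence the representation, is unchanged.
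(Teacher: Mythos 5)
Your proposal is correct and follows essentially the same route as the paper: the equality is obtained by combining $\det_H=\mathrm{Res}^G_H\det_G$ with the monoidality of restriction and the earlier computation of $-\otimes\det_G$ on Young diagrams. Your additional stabilizer argument for irreducibility (that $(G/H)_{\lambda\otimes\det_G}=(G/H)_\lambda$ since tensoring by linear characters commutes) is a welcome supplement, as the paper's one-line proof establishes only the equality and leaves the irreducibility claim implicit.
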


\begin{proof}
$\mathrm{Res}^G_H \rep{i}{j}\otimes \det_H =\mathrm{Res}^G_H \rep{i}{j}\otimes \mathrm{Res}^G_H\det_G =\mathrm{Res}^G_H( \rep{i}{j}\otimes \det_G)  =\mathrm{Res}^G_H \rep{i+1}{j+1}$
\end{proof}

Bases for the Isotypical components of $S/(R_+)$ of type $\mathrm{Res}^G_H \rep{i}{j}$ can again be calculated as Higher Specht Polynomials see \cite{morita1998}. The goal is to show that $S/(z)$ is a representation generator for $CM(R/(\Delta))$. A full list of isomorphism classes of Cohen-Macaulay modules can be found in \cite{YujiCM}. The rest of this section is dedicated to showing that there is at least one of each module in that list. 

We will breakdown the general case into 3 distinct cases:\begin{enumerate}
    \item[(1)] $m\neq p$ and $p$ odd,
    \item[(2)] $m\neq p$ and $p$ even,
    \item[(3)] $m = p$.
\end{enumerate}

\subsection{ \(m\neq p \) and \(p\) odd.} Recall for the groups $G(m,p,2)$ with $q=\frac{m}{p}$ the invariants are $\sigma_1 =(xy)^q$ and $\sigma_2=x^m-y^m$. The discriminant $\Delta=\sigma_1(\sigma_2^2-4\sigma_1^p)$ of $H$ defines a $D_{p+2}$ singularity. When $p$ is odd there are $2$ matrix factorizations of the form $(\sigma_2^2-4\sigma_1^{p},\sigma_1)$, $(\sigma_1,\sigma_2^2-4\sigma_1^{p})$ and thus the two modules $A=\cok(\sigma_1,\sigma_2^2-4\sigma_1^{p})$ and  $B=\cok(\sigma_2^2-4\sigma_1^{p},\sigma_1)$. Using the results from Section \ref{lin}, we can construct one dimensional representations $\lambda_1$,$\lambda_2$ of $G(m,p,2)$ such that $(z|_{\lambda_1},j|_{\lambda_1\otimes{\det}})$ is equivalent to $(\sigma_1,\sigma_2^2-4\sigma_1^{p})$ and $(z|_{\lambda_2},j|_{\lambda_2\otimes{\det}})$ is equivalent to $(\sigma_2^2-4\sigma_1^{p},\sigma_1)$. 

For $R/(\Delta)$ there are four more families of indecomposable $CM$-modules, $X_j,Y_j,K_j,N_j$, where $0\leq j \leq p-1$. Here we note that in \cite{YujiCM} the notation for $K_j$ is $M_j$, the change is due to clashing notation.
These modules are the cokernels of the matrix factorizations $(\phi_j,\psi_j)$ and $(\xi_j,\eta_j)$ for $0\leq j \leq p-1$, where the matrices are:

\[\phi_j= \begin{pmatrix} \sigma_2 & 2\sigma_1^j \\ 2\sigma_1^{p-j} & \sigma_2 \end{pmatrix}\hspace{1cm} \psi_j = \begin{pmatrix} \sigma_2\sigma_1 & -2\sigma_1^{j+1} \\ -2\sigma_1^{p+1-j} & \sigma_2\sigma_1 \end{pmatrix} \]

\[\xi_j=\begin{pmatrix}\sigma_2 & 2\sigma_1^j \\ 2\sigma_1^{p+1-j} & \sigma_2\sigma_1 \end{pmatrix} \hspace{1cm}  \eta_j =\begin{pmatrix}\sigma_2\sigma_1 & -2\sigma_1^j \\ -2\sigma_1^{p+1-j} & \sigma_2  \end{pmatrix}\]

For $0 \leq j \leq p-1$, we define  $X_j=\mathrm{Coker}(\xi_j,\eta_j)$, $Y_j=\mathrm{Coker}(\eta_j,\xi_j)$, $K_j=\mathrm{Coker}(\phi_j,\psi_j)$ and $N_j=\mathrm{Coker}(\psi_j,\phi_j)$. Then one can quickly show the following: 

$$X_0 \cong R/(\Delta) \cong Y_0, \hspace{1cm} K_0 \cong B, \hspace{1cm} N_0 \cong A \oplus R/(\Delta) $$
and for $1\leq j \leq p-1$;

$$X_j\cong Y_{p+1-j},\hspace{1cm} Y_j\cong X_{p+1-j}, \hspace{1cm} X_{\frac{p+1}{2}}\cong Y_{\frac{p+1}{2}},\hspace{1cm} N_j\cong N_{p-j}, \hspace{1cm} K_j\cong K_{p-j}$$.

Using these isomorphisms, and noting that every $Y$ module is isomorphic to an $X$ module, a complete list of non-isomorphic $CM$ modules over $R/(\Delta)$ is given by:
\begin{itemize}
    \item[i)] $X_j$ for $1\leq j \leq p$.
    \item[ii)] $K_j$ for $1 \leq j \leq \frac{p-1}{2}$,
    \item[iii)] $N_j$ for $1 \leq j \leq \frac{p-1}{2}$,
    \item[iv)] $A$,
    \item[v)] $B$,
    \item[vi)] $R/(\Delta)$.
\end{itemize}

\begin{theorem}\label{mod odd}
Let $H=G(m,p,2)$, with $p$ odd, $m\neq p$ and $q=\frac{m}{p} $.  The 2-dimensional representations of $H$ correspond to the following $CM$-modules in the decomposition of $S/(z)$:

\begin{itemize}
    \item[1)] $M_{\mathrm{Res}^G_H\rep{i}{jq-1}}\cong X_{p+1-j}^2$ for $0 \leq i < q-1$, $1 \leq j \leq p$.
    \item[2)] $M_{\mathrm{Res}^G_H\rep{i}{jq+r}}\cong K_{p-j}^2\cong K_j^2$ for $0 \leq j \leq p-1$, $0 \leq i < q-1$ and $0 \leq r < q-1$.
    \item[3)] $M_{\mathrm{Res}^G_H\rep{q-1}{(j+1)q-1}} \cong N_{p-j}^2 \cong N_{j}^2$ for $1 \leq j \leq p-1$.
\end{itemize}
\end{theorem}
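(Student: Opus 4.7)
The plan is to run, in each of the three cases, the same three-step procedure that produced Lemma~\ref{Lastrep}: (i) use the Higher Specht polynomial description of $S_H$ from Theorem~\ref{Spmnp} to write down a basis of the isotypical component $S^H_{\mathrm{Res}^G_H\rep{i}{j}}$ and of the shifted component $S^H_{\mathrm{Res}^G_H\rep{i+1}{j+1}}$ (using that $\det_H$-twisting shifts both positions by $1$ mod $m$); (ii) compute multiplication by $z=xy(x^m-y^m)$ on the chosen basis, reducing monomials to $R$-multiples of basis elements in the target component via $(xy)^q=\sigma_1$ and $x^m-y^m=\sigma_2$; (iii) identify the resulting $4\times 4$ matrix, up to matrix factorization equivalence, with the appropriate $\phi_j$, $\psi_j$, $\xi_j$, or $\eta_j$ tensored with $I_2$, and then appeal to the isomorphisms $K_j\cong K_{p-j}$, $N_j\cong N_{p-j}$, $X_j\cong Y_{p+1-j}$ listed above to name the resulting $CM$ module.

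The reason the answer splits into three cases is determined entirely by where $i$ and $j$ sit relative to multiples of $q$, since that is what controls whether multiplying an exponent by $xy$ \emph{crosses} a boundary and therefore produces an extra factor of $\sigma_1$. Concretely, in case (2) with $0\le i<q-1$ and $0\le r<q-1$, neither coordinate crosses a boundary when multiplied by $xy$, so the resulting matrix is symmetric in the two $\sigma_1$-powers and takes the form $\phi_{p-j}\otimes I_2$ after cleaning up, giving $M\cong K_{p-j}^2\cong K_j^2$. In case (1), the second coordinate sits one below a multiple of $q$, so on one basis vector the $y$-exponent crosses a boundary while on the paired vector it does not; this forces the asymmetric pair $(\xi_{p+1-j},\eta_{p+1-j})$ and yields $X_{p+1-j}^2$. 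In case (3), both coordinates sit one below a multiple of $q$, so both cross a boundary, producing the pair $(\psi_j,\phi_j)$ and hence $N_{p-j}^2\cong N_j^2$. The factor $2$ in each exponent arises, as in Lemma~\ref{Lastrep}, from $\dim\mathrm{Res}^G_H\rep{i}{j}=2$ making the $R$-rank of the isotypical component equal to $e_\chi^2=4$, split into two copies of a $2\times 2$ block on which $z$ acts identically.

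Concretely I would begin with case (2), which is the generic situation: take $Q=\repg{1}{i}{2}{jq+r}$ and $T=\repg{2}{i}{1}{jq+r}$, write down the four basis vectors $\Delta_{Q,Q}^{(0)}, \Delta_{Q,T}^{(0)}, \Delta_{T,Q}^{(0)}, \Delta_{T,T}^{(0)}$ from the example preceding Theorem~\ref{Spmnp} (together with the analogue for the target $\rep{i+1}{jq+r+1}$), and verify directly that $z$ acts by
\[
\begin{pmatrix}\sigma_2 & 2\sigma_1^{p-j}\\ 2\sigma_1^{j} & \sigma_2\end{pmatrix}\otimes I_2,
\]
which is $\phi_{p-j}\otimes I_2$. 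The other two cases are analogous but with different boundary crossings triggering the asymmetric matrices $\xi_{\ast}$ and $\psi_{\ast}$ respectively; in each case the target basis must be selected carefully so the matrix comes out literally in Yoshino's normal form.

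The main obstacle is bookkeeping rather than ideas: keeping track of which Higher Specht polynomials survive restriction from $G(m,1,2)$ to $H$ (i.e.\ which $Q$ lie in $\mathrm{STab}(\lambda)_q$ per Morita--Yamada), choosing the target basis so that the $4\times 4$ matrix literally equals the relevant $\phi_j,\psi_j,\xi_j$ or $\eta_j$ tensored with $I_2$ rather than merely being equivalent to it through nontrivial row and column operations, and correctly matching the paper's index $j$ with Yoshino's index (hence the appearance of both $j$ and $p-j$, or $p+1-j$, reflecting the symmetries $K_j\cong K_{p-j}$ etc.). Once the three matrix identities are verified, the cokernels are read off directly from the list of indecomposable $CM$ modules of $R/(\Delta)$ recalled at the start of Section~\ref{Mainse}.
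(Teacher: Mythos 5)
Your proposal follows essentially the same route as the paper's own proof: write down Higher Specht polynomial bases for the relevant isotypical components and their $\det$-twists, compute the $4\times 4$ matrix of multiplication by $z$ using $(xy)^q=\sigma_1$ and $x^m-y^m=\sigma_2$, recognize it as one of Yoshino's matrices $\phi_j,\psi_j,\xi_j,\eta_j$ tensored with $I_2$, and invoke the symmetries $K_j\cong K_{p-j}$, $N_j\cong N_{p-j}$, $X_j\cong Y_{p+1-j}$ to name the cokernel. Your case analysis (which matrix arises according to whether the exponents cross a multiple of $q$) correctly predicts the matrices the paper obtains in each of the three cases, so the remaining work is exactly the bookkeeping you identify.
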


\begin{proof}
This is proved by calculations of the following form:
\begin{itemize}
\item[1)]The module $M_{ \mathrm{Res}^G_H \rep{i}{jq-1}}$ is $X_{p+1-j}^2$ for $1 \leq j\leq p$.
\end{itemize}

Using Higher Specht polynomials, a basis for $S^G_{\mathrm{Res}^G_H \rep{i}{jq-1}}$ where $0\leq i <q-1$ and $1 \leq j\leq p$ is:

\[\{x^{jq-1}y^i,x^iy^{jq-1}, x^{q-1}y^{m-(j-1)q+i}, x^{m-(j-1)q+i}y^{q-1}\}\] 

and a basis for $S^G_{\mathrm{Res}\rep{i+1}{jq}}$ is:
\[\{x^{jq}y^{i+1},x^{i+1}y^{jq},x^{jq}y,y^{m+i+1-jq},x^{m+i+1-jq}\}\]

and so by calculation we can express multiplication by $z= xy(x^m-y^m)$ as the matrix:
\begin{equation*}
    \begin{split}
    z(x^{jq-1}y^i) &=x^{jq}y^{i+1}(x^m-y^m)\\
    &= \sigma_2(x^{jq}y^{i+1})-2\sigma_2^j(y^{m+i+1-jq}) \\
    z(x^iy^{jq-1}) &=x^{i+1}y^{jq}(x^m-y^m)\\
    &= -\sigma_2(x^{i+1}y^{jq})+2\sigma_2^j(x^{m+i+1-jq}) \\
    z(x^{q-1}y^{m-(j-1)q+i})&=x^qy^{m-(j-1)q+i+1}(x^m-y^m)\\
    &= 2\sigma_1^{p-(j-1)}(x^{jq}y^{i+1})-\sigma_1\sigma_2(y^{m-j+i+1})\\
    z(x^{m-(j-1)q+i}y^{q-1})& =x^{m-(j-1)q+i+1}y^{q}(x^m-y^m)\\
    &= -2\sigma_1^{p-(j-1)}(x^{i+1}y^{jq})-\sigma_1\sigma_2(x^{m-j+i+1})
    \end{split}
\end{equation*}
Which yields the matrix:
\[\begin{pmatrix}
 \sigma_2& 0 & 2\sigma_1^{p-(j-1)} & 0 \\ 0&-\sigma_2&0&-2\sigma_1^{p-(j-1)} \\ -2\sigma_1^j&0&-\sigma_1\sigma_2 & 0 \\ 0&2\sigma_1^j &0&\sigma_1\sigma_2
\end{pmatrix}\]

This is then equivalent as matrix factorizations of $\Delta$ to:

\[\begin{tikzcd}[ampersand replacement=\&, column sep =2.5cm]
 S^G_{\mathrm{Res}^G_H \rep{0}{jq-1}} \arrow{r}{}  \&S^G_{\mathrm{Res}^G_H \rep{1}{jq}} \arrow{r}{ \begin{pmatrix}\sigma_2 & 2\sigma_1^{(p+1-j)} \\ 2\sigma_1^{j} & \sigma_2\sigma_1 \end{pmatrix} \otimes I_2} \& S^G_{\mathrm{Res}^G_H \rep{0}{jq-1}}
\end{tikzcd}\]

Thus the module $M_{ \mathrm{Res}^G_H \rep{i}{jq-1} }\cong  X_{p+1-j}^2$ for $1\leq j \leq p$ and $0 \leq i < q-1$.

\begin{itemize}
    \item[2)] The modules $M_{\mathrm{Res}^G_H\rep{i}{jq+r}}\cong K_j^2$ for $1 \leq j \leq p-1$, $0\leq i < q-1$ and $0\leq r <0$.
\end{itemize}

A basis for
$S^G_{\mathrm{Res}^G_H \rep{0}{jq}}$ is:

\[\{x^{jq+r}y^{i},x^{i}y^{jq+r},x^{r}y^{m-jq+i},x^{m-jq+i}y^{r}\}.\]

and a basis for $S^G_{\mathrm{Res}\rep{1}{jq+1}}$ is:
\[\{x^{jq+r+1}y^{i+1},x^{i+1}y^{jq+r+1},x^{r+1}y^{m-jq+i+1},x^{m-jq+i+1}y^{r+1}\}.\]

and so by calculation we can express multiplication by $z$ as the matrix:
\[\begin{pmatrix}
 \sigma_2 & 0 & 2\sigma_1^{p-j}&0 \\ 0&-\sigma_2&0&-2\sigma_1^{p-j} \\ -2\sigma_1^{j}&0&-\sigma_2&0 \\ 0&2\sigma_1^{j} &0&\sigma_2
\end{pmatrix}\]

This is then equivalent to the matrix factorization of $\Delta$:

\[\begin{tikzcd}[ampersand replacement=\&, column sep =2.5cm]
S^G_{\mathrm{Res}^G_H \rep{0}{jq}} \arrow{r}{} \& S^G_{\mathrm{Res}^G_H \rep{1}{jq+1}} \arrow{r}{ \begin{pmatrix}\sigma_2 & 2\sigma_1^{(p-j)} \\ 2\sigma_1^{j} & \sigma_2\end{pmatrix} \otimes I_2} \& S^G_{\mathrm{Res}^G_H \rep{0}{jq}}
\end{tikzcd}\]

Thus for $1 \leq j \leq p-1$, $M_{\mathrm{Res}^G_H \rep{0}{jq}}\cong K_{p-j}^2$.

\begin{itemize}
    \item[3)]
$M_{\mathrm{Res}^G_H\rep{q-1}{(j+1)q-1}} \cong N_{p-j}^2$ for$1 \leq j \leq p-1$.

\end{itemize}

 A basis for
$S^G_{\mathrm{Res}^G_H \rep{q-1}{(j+1)q-1}}$ is:

\begin{equation*}
    \begin{split}
        \{x^{q-1}y^{(j+1)q-1},x^{(j+1)q-1}y^{q-1}
        x^{m+q-1-jq}y^{q-1},x^{q-1}y^{m+q-1-jq} \}
        \end{split}
\end{equation*}

 and a basis for $S^G_{\mathrm{Res}\rep{0}{jq}}$ is: 
\[\{x^{jq},x^{jq},y^{m-jq},y^{m-jq}\}\]

and so by calculation we can express multiplication by $z$ as the matrix:

\[\begin{pmatrix}
\sigma_1\sigma_2 & 0 & 2\sigma_1^{p-(j-1)} &0\\0& -\sigma_1\sigma_2&0&-2\sigma_1^{p-(j-1)} \\ -2\sigma_1^{j+1}&0&-\sigma_1\sigma_2 &0\\ 0&2\sigma_1^{j+1} &0&\sigma_1\sigma_2
\end{pmatrix}\]

This is then equivalent to the matrix factorization:

\[\begin{tikzcd}[ampersand replacement=\&, column sep =2.5cm]
 S^G_{\mathrm{Res}^G_H \rep{q-1}{(j+1)q-1}} \arrow{r}{} \& S^G_{\mathrm{Res}^G_H \rep{0}{jq}}\arrow{r}{ \begin{pmatrix}\sigma_2\sigma_1 & 2\sigma_1^{p-(j-1)} \\ 2\sigma_1^{j+1} & \sigma_2\sigma_1\end{pmatrix} \otimes I_2} \& S^G_{\mathrm{Res}^G_H \rep{q-1}{(j+1)q-1}}
\end{tikzcd}\]
Thus for $1 \leq j \leq p-1$, $M_{\mathrm{Res}^G_H \rep{q-1}{(j+1)q-1}}\cong N_{p-j}^2$.
\end{proof}

We fully decompose the module $S/(z);$
\begin{theorem}
Let $H=G(m,p,2)$, with $p$ odd, $m\neq p$ and $q=\frac{m}{p} $ Then:
\begin{equation*}
\begin{split}
S/(z) & \cong \bigoplus_{j=1}^{p}X^{2(q-1)}_j \oplus \bigoplus_{j=1}^{\frac{p-1}{2}} N_j^2 \oplus \bigoplus_{j=1}^{\frac{p-1}{2}}K_j^{2(q-1)^{2}} \oplus K_0^{ 2 {q-1 \choose 2}} \oplus R/(\sigma_1)\oplus (R/(\sigma_2^2 - 4\sigma_1^p))^{q-1}   \oplus R/(\Delta) \\ &\cong  \bigoplus_{j=1}^{p}X^{2(q-1)}_j \oplus \bigoplus_{j=1}^{\frac{p-1}{2}} N_j^2 \oplus \bigoplus_{j=1}^{\frac{p-1}{2}}K_j^{2(q-1)^{2}} \oplus  R/(\sigma_1)\oplus (R/(\sigma_2^2 - 4\sigma_1^p))^{q-1+2 {q-1\choose 2}}   \oplus R/(\Delta)
\end{split} 
\end{equation*}
\end{theorem}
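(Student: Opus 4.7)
The strategy is to combine the isotypical decomposition $S/(z) \cong \bigoplus_{\chi \in \mathrm{Irr}(H)} M_\chi$ from Section \ref{Structure} with the identifications provided by Section \ref{lin} for one-dimensional characters and by Theorem \ref{mod odd} for two-dimensional representations. Since $p$ is odd, the analysis preceding this theorem shows that every two-dimensional irreducible $G$-representation restricts irreducibly to $H$; together with the fact that the $G/H$-stabilizer of every one-dimensional $G$-representation is trivial, it follows that $\mathrm{Irr}(H)$ is the disjoint union of the $2q$ linear characters $\theta_\mathfrak{O}^a \otimes \theta_\mathfrak{q}^b$ (with $0 \le a \le q-1$, $0 \le b \le 1$) and the $G/H$-orbits of the two-dimensional $G$-representations $\rep{i}{j}$, each of size $p$.

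For the linear characters I would apply Lemma \ref{comp} and the relation $z j_\chi = f_\chi \, j_{\chi \otimes \det}$ case by case. The character $\theta_\mathfrak{O}^{q-1} \otimes \theta_\mathfrak{q}$ gives $M_\chi = R/(\Delta)$, the character $\theta_\mathfrak{O}^{q-1}$ gives $R/(\sigma_1)$, the $q-1$ characters $\theta_\mathfrak{O}^a \otimes \theta_\mathfrak{q}$ with $0 \le a \le q-2$ each give $R/(\sigma_2^2 - 4\sigma_1^p)$, and the remaining $q-1$ characters $\theta_\mathfrak{O}^a$ with $0 \le a \le q-2$ give the zero module. Summing yields the one-dimensional contribution $R/(\sigma_1) \oplus (R/(\sigma_2^2 - 4\sigma_1^p))^{q-1} \oplus R/(\Delta)$.

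The main step is then to translate the three cases of Theorem \ref{mod odd} into counts of $G/H$-orbits. In case (1) the $p(q-1)$ pairs $(i,j)$ represent distinct orbits, because for any $c \ne 0$ the shifted coordinate $i+qc$ lies outside $\{0,\ldots,q-2\}$; reindexing $j' = p+1-j$ produces $\bigoplus_{j=1}^{p} X_j^{2(q-1)}$. In case (2) with $j \ge 1$, the action of $\delta_1^{q(p-j)}$ induces the involution $(i,j,r) \mapsto (r, p-j, i)$, which has no fixed points since $p$ is odd; the $(p-1)(q-1)^2$ triples therefore split into $(p-1)(q-1)^2/2$ orbits, each contributing $K_j^2 \cong K_{p-j}^2$, and collapsing via $K_j \cong K_{p-j}$ gives $\bigoplus_{j=1}^{(p-1)/2} K_j^{2(q-1)^2}$. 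When $j = 0$ the constraint $i < r$ leaves $\binom{q-1}{2}$ triples, each representing a distinct orbit and contributing $K_0^2$, accounting for $K_0^{2\binom{q-1}{2}}$. Case (3) is paired similarly under the involution to give $\bigoplus_{j=1}^{(p-1)/2} N_j^2$.

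The hardest step will be the orbit-counting for case (2): one must verify that $\delta_1^{q(p-j)}$ realises the stated involution on triples (taking care to reorder $\rep{a}{b}$ so that the smaller index comes first), and that no other $G/H$-translate of $\rep{i}{jq+r}$ falls back into the parameter range. A direct arithmetic check using $0 \le i, r \le q-2$ together with $i + qc < m$ for $c < p$ handles this. Once assembled, summing all contributions yields the first displayed decomposition, and the second form follows from $K_0 \cong B \cong R/(\sigma_2^2 - 4\sigma_1^p)$, which absorbs $K_0^{2\binom{q-1}{2}}$ into the linear-character summand to produce the exponent $q - 1 + 2\binom{q-1}{2}$.
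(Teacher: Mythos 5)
Your proposal is correct and follows essentially the same route as the paper: the isotypical decomposition $S/(z)\cong\bigoplus_\chi M_\chi$, the linear-character contributions computed via Section \ref{lin} (with the characters $\theta_\mathfrak{O}^a$, $a\le q-2$, contributing zero), and the two-dimensional contributions read off from Theorem \ref{mod odd} after counting $G/H$-orbits of the diagrams $\rep{i}{j}$ under shifting. Your explicit fixed-point-free involution $(i,j,r)\mapsto(r,p-j,i)$ is just a sharper phrasing of the paper's identification $\rep{i}{kq+r}\cong\rep{r}{(p-k)q+i}$, so no substantive difference.
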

\begin{proof}
Let $\mathfrak{O}$ be the orbit of hyperplanes that contain $\ker(x)$ and $\ker(y)$ and $\mathfrak{q}$ the other orbit. The linear characters are given by $\theta_{\mathfrak{O}}^{i}\otimes\theta_{\mathfrak{q}}^{j}$ for $0\leq i \leq q-1$ and $0\leq j \leq 1$. The modules $M_{\theta_{\mathfrak{O}}^{i}}$ for $0\leq i <q-1$ give the trivial module. The module $M_{\theta_{\mathfrak{O}}^{q-1}}$ is $R/(\sigma_1)$, $M_{\theta_{\mathfrak{O}}^{i}\otimes\theta_{\mathfrak{q}}^{1}} \cong R/(\sigma_2^2 - 4\sigma_1^p)$ for $0\leq i <q-1$ and $\theta_{\mathfrak{O}}^{q-1}\otimes\theta_{\mathfrak{q}}^{1} \cong R/(\Delta)$. These are all of the modules that correspond to $1$-dimensional representations.

All $2$-dimensional representations fall into one of the cases of Theorem \ref{mod odd}, although we must be careful, since some restricted representations are the same.

The representations Res$_H^{G}\rep{i}{jq-1}$, for $0\leq i <q-1$ case never fall into another case, since $i \neq q-1$.  Then for each $j$ there are $q-1$ representations of this form, giving rise to $q-1$ copies of $X_{p-j}$ in the decomposition of $S/(z)$.

Each representation in the $\mathrm{Res}^G_H\rep{q-1}{(j+1)q-1}$, for $ 1\leq j \leq p-1$, case is isomorphic to another one, for example: $\rep{q-1}{(j+1)q-1} \cong \rep{q-1}{m-(j-1)q-1}$. There are $p-1$ $m$-tuple of Young diagrams of the form $\rep{q-1}{(j+1)q-1}$ and each one is equivalent, under shifting, to exactly one distinct one of the same form. One can see that all representations  $\mathrm{Res}^G_H\rep{q-1}{(j+1)q-1}$ for $ 1\leq j \leq \frac{p-1}{2}$ are distinct and are all of the representations appearing from $m$-tuples of Young diagrams of the form  $\rep{q-1}{(j+1)q-1}$. Thus one copy of $N_j^2$ for $1\leq j\leq \frac{p-1}{2}$ appears in the decomposition.

For the case of the representations Res$^G_H\rep{i}{jq+r}$ for $0 \leq j \leq \frac{p-1}{2}$, $0 \leq i < q-1$ and $0 \leq r < q-1$ are all distinct and for $\frac{p-1}{2}< k<p-1$, $\rep{i}{kq+r}\cong \rep{r}{(p-k)q+i}$ where $0\leq p-k\leq \frac{p-1}{2}$. Each Res$^G_H\rep{i}{jq+r}$ for $0 \leq j \leq \frac{p-1}{2}$, $0 \leq i < q-1$ and $0 \leq r < q-1$, gives a copy of $K_j^2$ in the decomposition, and so by counting we get, for $j=0$ there are $q-1\choose2$ distinct representations - all giving $K_0$. For all $1 \leq j \leq \frac{p-1}{2}$ we have $(q-1)^2$ distinct representations coming from the choice of $i$ and $r$. These then each give a copy of $K_j^2$. 

This then completes the irreducible $2$-dimensional representation case, since all irreducible $2$-dimensional representations fall into one of the above cases, after shifting by $q$ and therefore are isomorphic.
\end{proof}
\subsection{\(m\neq p\) and \(p\) even.} 
For the case when $p$ is even, $\spec(R/(\Delta)$) is also a $D_{p+2}$ singularity. The modules above are still indecomposable $CM$ modules, although, since $p+2$ is even, there are a few more indecomposable $CM$ modules. Additionally we have

\begin{itemize}
    \item $C_+ = \cok(\sigma_1(\sigma_2+2\sigma_1^{\frac{p}{2}}),\sigma_2-2\sigma_1^{\frac{p}{2}}),$
    \item $D_+ = \cok(\sigma_2-2\sigma_1^{\frac{p}{2}},\sigma_1(\sigma_2+2\sigma_1^{\frac{p}{2}})),$
    \item $C_- = \cok(\sigma_1(\sigma_2-2\sigma_1^{\frac{p}{2}}),\sigma_2+2\sigma_1^{\frac{p}{2}}),$
    \item $D_- = \cok(\sigma_2+2\sigma_1^{\frac{p}{2}},\sigma_1(\sigma_2-2\sigma_1^{\frac{p}{2}})).$
\end{itemize}

We also have the isomorphisms $K_{\frac{p}{2}}\cong C_+ \oplus C_-$ and $N_{\frac{p}{2}}\cong D_+ \oplus D_-$
\begin{theorem}\label{mod even}
Let $H=G(m,p,2)$, with $p$ even. The discriminant defines a $D_{p+2}$ singularity and the 2-dimensional representations correspond to the following Cohen-Macaulay modules in the decomposition of $S/(z)$:

\begin{itemize}
    \item $M_{\mathrm{Res}^G_H\rep{i}{jq-1}}\cong X_{p+1-j}^2$ for $1 \leq j \leq p-1$ and $0\leq i <q-1$.
    \item $M_{\mathrm{Res}^G_H\rep{0}{jq}}\cong K_j^2$ for $1 \leq j \leq p-1$, $j\neq\frac{p}{2} $.
    \item $M_{\mathrm{Res}^G_H\rep{q-1}{(j+1)q-1}} \cong N_j^2$ for $1 \leq j \leq p-1$, $j\neq\frac{p}{2} $.
\end{itemize}
\end{theorem}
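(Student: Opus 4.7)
The plan is to proceed in direct parallel with the proof of Theorem \ref{mod odd}, since the only substantive difference in the even $p$ case is that certain 2-dimensional representations of $G = G(m,1,2)$ restrict reducibly to $H = G(m,p,2)$, and these reducible cases are precisely what the hypothesis $j \neq p/2$ excludes in families (2) and (3).

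First I would verify the irreducibility of each restriction via the criterion recalled in Section 7: $\mathrm{Res}^G_H \rep{i}{j}$ is reducible iff $j \equiv i + m/2 \pmod{m}$. In family (1), this equation forces $i = q(j - p/2) - 1$, which lies outside $[0, q-1)$ for every integer $j$ in the allowed range, so every member of family (1) restricts irreducibly. In families (2) and (3), the congruence is satisfied exactly when $j = p/2$, which is excluded by hypothesis. Thus throughout, the restrictions are irreducible, the stabilizer index $u(\lambda) = 1$, and the twisted Specht polynomials $\Delta^{(l)}_{S,T}$ of Theorem \ref{Spmnp} collapse to the ordinary Specht polynomials with $l=0$.

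Next I would use Theorem \ref{Spmnp} to extract a basis of four Higher Specht polynomials for each $S^H_{\mathrm{Res}^G_H \rep{i}{j}}$. The key observation, illustrated in the example following Theorem \ref{Spmnp}, is that for each $\lambda$ considered the orbit $[\lambda]$ under $G/H$ contributes exactly one standard tableau $Q \in \mathrm{STab}(\lambda)_q$ whose associated Specht polynomials reproduce four monomials in $x$ and $y$ agreeing verbatim with those used in the proof of Theorem \ref{mod odd}. Because multiplication by $z = xy(x^m - y^m)$ is an intrinsic operation on $S$ that does not depend on whether we view the span as a $G$- or $H$-isotypical component, the resulting $4 \times 4$ matrix factorizations are identical to those computed in the odd $p$ case. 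After applying the same equivalence moves, these become $(\xi_{p-j} \otimes I_2, \eta_{p-j} \otimes I_2)$, $(\phi_{p-j} \otimes I_2, \psi_{p-j} \otimes I_2)$, and $(\psi_{p-j} \otimes I_2, \phi_{p-j} \otimes I_2)$ respectively, and invoking the symmetries $X_{p+1-j} \cong Y_j$, $K_{p-j} \cong K_j$, and $N_{p-j} \cong N_j$ yields the claimed identifications. The $D_{p+2}$ singularity type of $\spec(R/(\Delta))$ follows from an explicit change of variables on $\sigma_1(\sigma_2^2 - 4\sigma_1^p)$ converting it to standard form, analogous to the $A_3$ identification made earlier for $G(m,1,2)$.

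The principal obstacle is bookkeeping rather than substantive computation: one must confirm that the Higher Specht basis of the $H$-isotypical component really consists of the same four monomials (written on the nose as $x^a y^b$) used in the $G(m,1,2)$ case, and that the module-level symmetries between $X$, $K$, $N$, $Y$ are invoked consistently so that the final identification matches the statement. Because $u(\lambda) = 1$ throughout, the shift index $l$ in $\Delta^{(l)}_{S,T}$ takes only the value $0$, so the twisted Specht polynomials coincide with the ordinary ones and no genuinely new matrix calculation is required. The genuinely new case $j = p/2$, where the restriction splits and the cokernels decompose via $K_{p/2} \cong C_+ \oplus C_-$ and $N_{p/2} \cong D_+ \oplus D_-$, is handled separately in Theorem \ref{Theorem:Full} and so falls outside the present statement.
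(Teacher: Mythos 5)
Your proposal follows the paper's approach exactly --- the paper's own proof of this theorem is the single line ``The calculations are the same as in the odd case,'' and your argument supplies precisely the details (irreducibility of the restrictions whenever $j\neq\frac{p}{2}$, identity of the Higher Specht bases with the $G(m,1,2)$ monomials, and hence identity of the $4\times 4$ matrices for multiplication by $z$) that justify that reduction. The only blemish is a small indexing slip in the first family, where the equivalent factorization is $(\xi_{p+1-j}\otimes I_2,\eta_{p+1-j}\otimes I_2)$ rather than $(\xi_{p-j}\otimes I_2,\eta_{p-j}\otimes I_2)$, so that $X_{p+1-j}^2$ is obtained directly without invoking a symmetry.
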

\begin{proof} The calculations are the same as in the odd case.
\end{proof} 
\begin{remark} From Section \ref{lin}, the linear characters give the modules corresponding to the components of the discriminant. \end{remark}
\begin{theorem}
Let $H=G(m,p,2)$, with $p$ even, $m\neq p$ and $q=\frac{m}{p} $ Then:
\begin{equation*}
\begin{split}
S/(z) &\cong \bigoplus_{j=1}^{p}X^{2(q-1)}_j \oplus \bigoplus_{j=1}^{\frac{p-2}{2}} N_j^2 \oplus \bigoplus_{j=1}^{\frac{p-2}{2}}K_j^{2(q-1)^{2}} \oplus K_{\frac{p}{2}}^{2 {q-1 \choose 2}}\oplus K_0^{2{q-1 \choose 2}} \oplus R/\left(\sigma_1\right)\oplus (R/\left(\sigma_2^2 - 4\sigma_1^p\right))^{q-1}   \\ & \hspace{0.5cm}\oplus C_+ \oplus D_+ \oplus C_- \oplus D_-\oplus R/(\Delta)
 \\ &\cong \bigoplus_{j=1}^{p}X^{2(q-1)}_j \oplus \bigoplus_{j=1}^{\frac{p-2}{2}} N_j^2 \oplus \bigoplus_{j=1}^{\frac{p-2}{2}}K_j^{2(q-1)^{2}}  \oplus R/\left(\sigma_1\right)\oplus (R/\left(\sigma_2^2 - 4\sigma_1^p\right))^{q-1 + 2 {q-1 \choose 2 }} 
 \\&\hspace{0.5cm} \oplus C_+^{1+ 2 {q-1 \choose 2}} \oplus D_+ \oplus C_-^{1+2 {q-1 \choose 2}} \oplus D_-\oplus R/(\Delta)
\end{split}
\end{equation*}
\end{theorem}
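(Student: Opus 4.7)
The plan is to parallel the structure of the odd-$p$ proof of the analogous result, adapted to the two new features of the case $p$ even, $m \neq p$: the discriminant $\Delta$ factors into three irreducible pieces $\Delta_{\mathfrak{O}_1}\Delta_{\mathfrak{O}_2}\Delta_{\mathfrak{O}_3}$ (rather than two), and the family of two-dimensional $G$-representations $\rep{i}{m/2+i}$, $0 \le i < m/2$, becomes reducible on restriction to $H$, producing additional linear characters of $H$ by Stembridge's theorem. Starting from the isotypical decomposition
\[S/(z) \cong \bigoplus_{\chi \in \mathrm{irrep}(H)} M_\chi\]
of Section~\ref{Structure}, I would split $\mathrm{irrep}(H)$ into the one-dimensional and two-dimensional irreducibles and handle each separately.

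For the one-dimensional characters, the parameterisation of Section~\ref{lin} gives $\theta = \theta_{\mathfrak{O}_1}^{a_1}\otimes\theta_{\mathfrak{O}_2}^{a_2}\otimes\theta_{\mathfrak{O}_3}^{a_3}$ with $0 \le a_1 < q$ and $a_2,a_3 \in \{0,1\}$, for a total of $4q$ characters (which correctly accounts for both the $2q$ characters restricted from linear characters of $G$ and the $2q$ produced by Stembridge's splitting of the representations $\rep{i}{m/2+i}$). Extending the proof of Lemma~\ref{comp} multiplicatively gives $M_\theta \cong R\big/\big(\prod_{i\,:\,a_i = e_{\mathfrak{O}_i}-1} \Delta_{\mathfrak{O}_i}\big)$, and identifying $D_\pm \cong R/(\Delta_{\mathfrak{O}_{2,3}})$, $C_\pm \cong R/(\Delta_{\mathfrak{O}_1}\Delta_{\mathfrak{O}_{3,2}})$, and $R/(\sigma_2^2 - 4\sigma_1^p) \cong R/(\Delta_{\mathfrak{O}_2}\Delta_{\mathfrak{O}_3})$ produces the sum $R/(\sigma_1) \oplus (R/(\sigma_2^2-4\sigma_1^p))^{q-1} \oplus C_+ \oplus D_+ \oplus C_- \oplus D_- \oplus R/(\Delta)$ in the decomposition.

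For the two-dimensional irreducibles of $H$, I apply Theorem~\ref{mod even}: every such irreducible restricts from some $\rep{i}{j}$ with $j - i \not\equiv m/2 \pmod{m}$, and after shifting by multiples of $q$ a canonical representative lies in exactly one of the three families $\rep{i}{jq-1}$, $\rep{i}{jq+r}$, $\rep{q-1}{(j+1)q-1}$ appearing in Theorem~\ref{mod even}, giving respectively $X_{p+1-j}^2$, $K_j^2$, or $N_j^2$ (with $j = p/2$ excluded, since those $G$-representations split and have already been accounted for as linear characters). Counting how many $H$-orbits reduce to each canonical form, and using the identification $\rep{i}{jq+r} \sim_H \rep{r}{(p-j)q+i}$ to avoid double-counting, produces the $X_j^{2(q-1)}$, $K_j^{2(q-1)^2}$, $K_{p/2}^{2\binom{q-1}{2}}$, $K_0^{2\binom{q-1}{2}}$, and $N_j^2$ terms by the same book-keeping as in the odd-$p$ proof.

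Summing both contributions gives the first displayed isomorphism of the theorem, and substituting $K_{p/2} \cong C_+ \oplus C_-$ and $N_{p/2} \cong D_+ \oplus D_-$ yields the second form after collecting like terms. The main obstacle is the orbit-counting step: one must verify that every $H$-orbit of non-splitting two-dimensional $G$-representations has exactly one canonical representative among the three families of Theorem~\ref{mod even}, and that the enumeration of linear characters via the triple $(a_1,a_2,a_3)$ correctly captures both the restricted and the Stembridge-split characters of $H$ with no duplication.
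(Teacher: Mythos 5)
Your proposal follows essentially the same route as the paper's proof: the one-dimensional characters are handled by extending Lemma \ref{comp} multiplicatively over the three orbits of hyperplanes, and the two-dimensional irreducibles are handled by Theorem \ref{mod even} together with the same orbit/shift bookkeeping (including the representations $\rep{i}{\frac{p}{2}q+r}$ with $i\neq r$ contributing $K_{\frac{p}{2}}^{2\binom{q-1}{2}}$), so there is no difference in method. One point does not close up, however: your own rule $M_\theta\cong R\big/\big(\prod_{i\,:\,a_i=e_{\mathfrak{O}_i}-1}\Delta_{\mathfrak{O}_i}\big)$ applied to the characters $\theta_{\mathfrak{O}_1}^{a_1}\otimes\theta_{\mathfrak{O}_j}$ with $0\le a_1<q-1$ and $j\in\{2,3\}$ yields $q-1$ copies of each of $D_+$ and $D_-$, not the single copies you record; with only one copy of each your list of $4q$ linear characters does not account for all $4q$ cyclic summands. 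The paper's proof in fact derives exactly these $q-1$ copies (``$M_{\theta_{\mathfrak{O}}^{i}\otimes\theta_{\mathfrak{q}_\mp}^{1}}$ for $0\le i<q-1$''), so the tension is between the computation and the displayed multiplicities rather than with your approach, but you should either correct the exponent on $D_\pm$ to $q-1$ or explain why the extra $2(q-2)$ summands vanish.
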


\begin{proof}
Let $\mathfrak{O}$ be the orbit of hyperplanes such that $j_\mathfrak{O}:=\prod_{H\in \mathfrak{O}}\alpha_H= \sigma_1$ and let $\mathfrak{q}_+, \mathfrak{q}_-$ be the orbit such that $j_{\mathfrak{q_+}}= \sigma_2+2\sigma_1^{\frac{p}{2}}, j_{\mathfrak{q_-}}= \sigma_2-2\sigma_1^{\frac{p}{2}}$. The linear characters are given by $\theta_{\mathfrak{O}}^{i}\otimes\theta_{\mathfrak{q}_+}^{j} \otimes \theta_{\mathfrak{q}_-}^{k}$ for $0\leq i \leq q-1$, $0\leq j \leq 1$ and $0\leq k \leq 1$. The modules $M_{\theta_{\mathfrak{O}}^{i}}$ for $0\leq i <q-1$ give the trivial module. The module $M_{\theta_{\mathfrak{O}}^{q-1}}$ is $R/(\sigma_1)$, $M_{\theta_{\mathfrak{O}}^{i}\otimes\theta_{\mathfrak{q_+}}^{1}\otimes \theta_{\mathfrak{q_-}}^1} \cong R/\sigma_2^2 - 4\sigma_1^p$ for $0\leq i <q-1$ and \mbox{$M_{\theta_{\mathfrak{O}}^{q-1}\otimes\theta_{\mathfrak{q_+}}^{1}\otimes \theta_{\mathfrak{q_-}}^1} \cong R/(\Delta)$}. Now we can find the isomorphisms $M_{\theta_{\mathfrak{O}}^{q-1}\otimes\theta_{\mathfrak{q_+}}^{1}} \cong C_+$ and $M_{\theta_{\mathfrak{O}}^{i}\otimes\theta_{\mathfrak{q_-}}^{1}} \cong D_+$ for $0 \leq i <q-1$. Also $M_{\theta_{\mathfrak{O}}^{q-1}\otimes\theta_{\mathfrak{q_-}}^{1}} \cong C_-$ and  $M_{\theta_{\mathfrak{O}}^{i}\otimes\theta_{\mathfrak{q_+}}^{1}} \cong D_-$ for $0 \leq i <q-1$. 

All $2$-dimensional representations fall into one of the cases of Theorem \ref{mod even}, although we must be careful, since some of the representations considered in the cases above are isomorphic to each other.

The representations  $\mathrm{Res}^G_H\rep{i}{jq-1}$, for $0\leq i <q-1$ case are never fall into another case, since $i \neq q-1$.  Then for each $j$ there are $q-1$ representations of this form, giving rise to $q-1$ copies of $X_j$ in the decomposition of $S/(z)$.

Each representation in the $\mathrm{Res}^G_H\rep{q-1}{(j+1)q-1}$, for $ 1\leq j \leq p-1$ and $j \neq \frac{p}{2}$, case is isomorphic to another one, for example: $\rep{q-1}{(j+1)q-1} \cong \rep{q-1}{m-(j-1)q-1}$. When $j = \frac{p}{2}$, $\mathrm{Res}^G_H\rep{q-1}{(j+1)q-1}$ is not an irreducible $2$-dimensional representation, since it is the direct sum of two $1$-dimensional ones. There are $(p-1)$ $m$-tuples of Young diagrams of the form $\rep{q-1}{(j+1)q-1}$ and each one is equivalent, under shifting, to exactly one distinct one of the same form. One can see that all representations  $\mathrm{Res}^G_H\rep{q-1}{(j+1)q-1}$ for $ 1\leq j \leq \frac{p-2}{2}$ are distinct and are all of the representations appearing from $m$-tuples of Young diagrams of the form  Res$^G_H\rep{q-1}{(j+1)q-1}$. Thus one copy of $N_j^2$ for $1\leq j\leq \frac{p-1}{2}$ appears in the decomposition.

For the case of the representation Res$^G_H\rep{i}{jq+r}$ for $0 \leq j \leq \frac{p-2}{2}$, $0 \leq i < q-1$ and $0 \leq r < q-1$ are all distinct and for $\frac{p+2}{2}\leq  k<p-1$, $\rep{i}{kq+r}\cong \rep{r}{(p-k)q+i}$ where $0\leq p-k\leq \frac{p-2}{2}$. Each Res$^G_H\rep{i}{jq+r}$ for $0 \leq j \leq \frac{p-2}{2}$, $0 \leq i < q-1$ and $0 \leq r < q-1$ gives a copy of $K_j^2$ in the decomposition, and so by counting we get, for $j=0$ there are $q-1\choose2$ distinct representations - all giving $K_0$. For all $1 \leq j \leq \frac{p-2}{2}$ we have $(q-1)^2$ distinct representations coming from the choice of $i$ and $r$ - each giving a copy of $K_j^2$. Since we are in the even case if $i=r$ and $j=\frac{p}{2}$ then the representation Res$^G_H\rep{i}{jq+r}$ splits into two irreducible $1$-dimensional representations, but if $i \neq r$ and $j=\frac{p}{2}$ then Res$^G_H\rep{i}{jq+r}$ is an irreducible $2$-dimensional representation and gives a copy of $K_{\frac{p}{2}}^2$. Note that for $i \neq r$ and $j=\frac{p}{2}$, $\rep{i}{jq+r}\cong\rep{r}{jq+i} $ and so the number of distinct representations of this form are counted by ${q-1 \choose 2}$.

This then completes the irreducible $2$-dimensional representation case, since all irreducible $2$-dimensional representations fall into one of the above cases, after shifting.
\end{proof}
\subsection{\( m=p \)}
The case of $G(m,m,2)$ with $m>2$ is similar to that of $G(2p,p,2)$; $G(m,m,2)$ is also true reflection group and thus we already know \cite{BFI} that $S/(z)$ is an $NCR$ for $R/(\Delta)$. In loc. cit. it was shown that for a true reflection group in dimension 2 there is a $1-1$ correspondence between the isomorphism classes of indecomposible Cohen-Macaulay modules over the discriminant and the isotypical components. We calculate the correspondence using the \cite{YujiCM} for the matrix factorization for the discriminant, which in this case is an $A_{p-1}$ singularity. Let $G=G(m,1,2)$ with $m>2$ and $H=G(m,m,2)$, the representations Res$^G_H\rep{i}{j}\cong$Res$^G_H\rep{i+1}{j+1}$ where $0\leq i < j < m$ and $i+1$ and $j+1$ are taken mod $m$. From this we see that the action of $-\otimes \det$ is the identity on all irreducible two-dimension representations of $H$.

\begin{remark}
We avoid the case $m=2$ since $G(2,2,2)$ is a reducible reflection group and the case $m=1$ is $S_2$.
\end{remark}

The 2-dimensional irreducible representations are then distinguished by the distance between the two boxes in the tuple. In particular the representations Res$^G_H\rep{0}{i}$ for $1\leq i <\frac{m}{2}$ are all distinct and any other irreducible two-dimensional representation is isomorphic to one of these, this can be seen by shifting the $m$-tuple of diagrams. Note that if $m$ is even the representation Res$^G_H \rep{0}{\frac{m}{2}}$ is not irreducible. 

Recalling that the discriminant is $\Delta=(x^m-y^m)^2=\sigma_2^2-4\sigma_1^m$, then Spec$(R/(\Delta))$ is an $A_{m-1}$ singularity. From \cite{YujiCM} the $CM$-modules come from the matrix factorizations $(\phi_j,\psi_j)$ where:

\[\phi_j= \begin{bmatrix}\sigma_2 & 2\sigma_1^{m-j} \\ 2\sigma_1^j & \sigma_2 \end{bmatrix},\psi_j= \begin{bmatrix}\sigma_2 & -2\sigma_1^{m-j} \\ -2\sigma_1^j & \sigma_2 \end{bmatrix} \text{ For } 0\leq j\leq p\]

Let $X_j=\textrm{Coker}(\phi_j,\psi_j)$, then $X_0\cong R$, $X_j\cong X_{p-j}$. When $m$ is odd, these are all the irreducible Cohen-Macaulay modules over $R/(\Delta)$.  
When $m$ is even, then $(\sigma_2+2\sigma_1^{\frac{m}{2}},\sigma_2-2\sigma_1^{\frac{m}{2}})$ and  $(\sigma_2-2\sigma_1^{\frac{m}{2}},\sigma_2+2\sigma_1^{\frac{m}{2}})$ are matrix factorizations. Let $N_+$ and $N_-$ be the $CM$ modules given by $(\sigma_2+2\sigma_1^{\frac{m}{2}},\sigma_2-2\sigma_1^{\frac{m}{2}})$ and $(\sigma_2-2\sigma_1^{\frac{m}{2}},\sigma_2+2\sigma_1^{\frac{m}{2}})$ respectively, Then $X_{\frac{m}{2}}\cong N_+ \oplus N_-$.
\begin{lemma}
Let $G=G(m,1,2)$ and $H=G(m,m,2)$ then the modules $M_{\textrm{Res}^G_H \rep{0}{i}}\cong X_i^2$ for $0\leq i < \frac{m}{2}$
\end{lemma}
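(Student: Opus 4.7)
The plan is to mirror the strategy used in Lemma \ref{Lastrep} and Theorems \ref{mod odd} and \ref{mod even}: produce an explicit $R$-basis of the isotypical component $S^H_{\mathrm{Res}^G_H\rep{0}{i}}$, compute the matrix of multiplication by $z=x^m-y^m$ on that basis, and identify the resulting matrix factorization of $\Delta=\sigma_2^2-4\sigma_1^m$ with $\phi_i\otimes I_2$. A convenient feature of the case $H=G(m,m,2)$ is that $\mathrm{Res}^G_H\rep{0}{i}$ is fixed by $-\otimes\det$, so the source and target of $z|_\chi$ have the same basis and the matrix is genuinely square.

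First, I would exhibit the basis. For $H=G(m,m,2)$ one has $q=1$, so the non-vanishing higher Specht polynomials in $S_H$ come from standard tableaux in $\mathrm{STab}(\lambda)_1$, that is, those with $1$ placed in the $0$-th component. Within the $G/H$-orbit $[\rep{0}{i}]$, only the two representatives $\rep{0}{i}$ and $\rep{0}{m-i}$ (the latter obtained from $\rep{0}{i}$ by shifting by $-i$ and reordering the two cells; these are distinct because $i<m/2$) admit a non-empty $\mathrm{STab}(\lambda)_1$. By Theorem \ref{Spmnp} and as in Example \ref{diff}, this yields the four monomials $\{x^i,\,y^i,\,x^{m-i},\,y^{m-i}\}$ as an $R$-basis of $S^H_{\mathrm{Res}^G_H\rep{0}{i}}\cong R^{4}$.

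Next, I would compute $z\cdot f$ for each basis element $f$ and rewrite the result using $\sigma_1=xy$ and $\sigma_2=x^m+y^m$. Exploiting $x^m=\sigma_2-y^m$ and $x^a y^a=\sigma_1^a$, a direct calculation gives
\begin{align*}
z\cdot x^i &= \sigma_2\, x^i - 2\sigma_1^i\, y^{m-i}, & z\cdot y^{m-i} &= 2\sigma_1^{m-i}\, x^i - \sigma_2\, y^{m-i},\\
z\cdot y^i &= -\sigma_2\, y^i + 2\sigma_1^i\, x^{m-i}, & z\cdot x^{m-i} &= -2\sigma_1^{m-i}\, y^i + \sigma_2\, x^{m-i}.
\end{align*}
After reordering the basis as $\{x^i,y^{m-i},y^i,x^{m-i}\}$, the $4\times 4$ matrix of $z$ becomes block-diagonal with two $2\times 2$ blocks that are negatives of each other.

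The final step is to identify each block, up to matrix-factorization equivalence, with $\phi_i=\begin{pmatrix}\sigma_2 & 2\sigma_1^{m-i}\\ 2\sigma_1^i & \sigma_2\end{pmatrix}$. Conjugating by $\mathrm{diag}(1,-1)$ on source and target sends the first block $\begin{pmatrix}\sigma_2 & 2\sigma_1^{m-i}\\ -2\sigma_1^i & -\sigma_2\end{pmatrix}$ to $\psi_i$, and the identity $\mathrm{diag}(1,-1)\phi_i\mathrm{diag}(1,-1)=\psi_i$ shows $\mathrm{Coker}(\phi_i)\cong\mathrm{Coker}(\psi_i)=X_i$; the second block differs from the first only by an overall sign, which induces an isomorphism of cokernels. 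Hence the matrix factorization we have produced is equivalent to $\phi_i\otimes I_2$, whose cokernel is $X_i\oplus X_i$, and Eisenbud's theorem then gives $M_{\mathrm{Res}^G_H\rep{0}{i}}\cong X_i^{2}$. The main obstacle is the bookkeeping to confirm that both blocks genuinely recover $X_i$ rather than some $X_j$ with $j\neq i$ or an inequivalent factorization of $\Delta$; this reduces to the sign-conjugation identity just stated and the verification that $\phi_i\psi_i=\Delta\cdot I$.
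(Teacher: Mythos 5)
Your proposal is correct and follows essentially the same route as the paper: exhibit the basis $\{x^i,y^i,x^{m-i},y^{m-i}\}$ of the isotypical component via higher Specht polynomials, compute the $4\times 4$ matrix of multiplication by $z=x^m-y^m$ in terms of $\sigma_1,\sigma_2$, and identify it up to matrix-factorization equivalence with $\phi_i\otimes I_2$. The only quibble is in the last step: simultaneous conjugation of the block $\bigl(\begin{smallmatrix}\sigma_2 & 2\sigma_1^{m-i}\\ -2\sigma_1^i & -\sigma_2\end{smallmatrix}\bigr)$ by $\mathrm{diag}(1,-1)$ does not give $\psi_i$; you should instead use the independent source/target base changes allowed in $MF_B(\Delta)$, e.g.\ multiplying on the left by $\mathrm{diag}(1,-1)$ gives $\phi_i$ directly, which yields the same conclusion.
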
\begin{proof}

A basis for the isotypical components of the coinvariant algebra $S_H$ of $H$, of type Res$^G_H\rep{0}{i}$ for $1\leq i < \frac{m}{2}$ is $\{x^i,y^i,x^{m-i},y^{m-i}\}$. Recalling that $z=x^m-y^m$, calculating on this basis, multiplication by $z$ can be expressed as the matrix:

\[\begin{pmatrix}
\sigma_2 & 0 & 0 & 2\sigma_1^{m-i} \\ 0&-\sigma_2 &-2\sigma_1^{m-i}&0 \\ 0&2\sigma_1^i&\sigma_2&0 \\ -2\sigma_1^i &0&0&-\sigma_2y^{m-i}
\end{pmatrix}\]

This is then equivalent as matrix factorizations to:

\[\begin{tikzcd}[ampersand replacement=\&, column sep =2.5cm]
S^G_{\mathrm{res}^G_H \rep{0}{i}}\arrow{r}{j} \& S^G_{\mathrm{res}^G_H \rep{0}{i}} \arrow{r}{ \begin{pmatrix}\sigma_2 & 2\sigma_1^{m-i} \\ 2\sigma_1^{i} & \sigma_2\end{pmatrix} \otimes I_2} \& S^G_{\mathrm{Res}^G_H \rep{0}{i}} 
\end{tikzcd}\]
\end{proof}
So in the odd case we have found all the indecomposable $CM$ modules over $R/(\Delta)$. In the even case since Res$^G_H\rep{0}{\frac{m}{2}}$ splits into 2 one-dimensional representations, from Section \ref{lin} we obtain the modules corresponding to the components of the discriminant.

\begin{theorem}\label{Theorem:Full}
Let $H=G(m,m,2)$, with $m$ even, Then:
$$S/(z) \cong \bigoplus_{j=1}^{\frac{m-2}{2}} X_j^2 \oplus N_+ \oplus N_-\oplus R/(\Delta). $$

Let $H=G(m,m,2)$, with $m$ odd, Then:
$$S/(z) \cong \bigoplus_{j=1}^{\frac{m-1}{2}} X_j^2 \oplus R/(\Delta). $$
\end{theorem}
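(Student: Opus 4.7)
The plan is to combine the isotypical decomposition of Section \ref{Structure} with the explicit enumeration of irreducible representations of $H=G(m,m,2)$ obtained in the previous subsection. First I would observe, via a character count using $|H|=2m$, that $H$ has $\lfloor(m-1)/2\rfloor$ non-isomorphic irreducible $2$-dimensional representations together with $2$ (respectively $4$) linear characters when $m$ is odd (respectively even). The $2$-dimensional ones are parametrised by $\mathrm{Res}^G_H\rep{0}{i}$ for $1\le i\le\lfloor(m-1)/2\rfloor$, using the isomorphism $\mathrm{Res}^G_H\rep{i}{j}\cong\mathrm{Res}^G_H\rep{i+1}{j+1}$ to collapse equivalent classes; in the even case the value $i=m/2$ is excluded precisely because the corresponding restriction splits into two $1$-dimensional characters, which accounts for the two extra linear characters in that case.

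For the $2$-dimensional contribution I would apply the preceding lemma to obtain $M_{\mathrm{Res}^G_H\rep{0}{i}}\cong X_i^2$ for each distinct class, so that these representations contribute exactly $\bigoplus_{j=1}^{\lfloor(m-1)/2\rfloor}X_j^2$ to the decomposition of $S/(z)$. For the linear characters I would invoke Section \ref{lin}: the trivial character yields the zero module (its $1\times 1$ matrix factorization is trivial), while the character $\det$ yields $R/(\Delta)$, since $j_{\det}=j$ and $zj=\Delta$ in this orbit setup. In the odd case this already exhausts $\mathrm{irrep}(H)$, assembling to the claimed decomposition.

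The even case requires one further identification. Here the hyperplane polynomial $z=x^m-y^m$ factors as $(x^{m/2}-y^{m/2})(x^{m/2}+y^{m/2})$, splitting the arrangement into two distinct $H$-orbits $\mathfrak{O}_\pm$ with $\Delta_{\mathfrak{O}_\pm}=\sigma_2\mp 2\sigma_1^{m/2}$. Applying Lemma \ref{comp} to each orbit character $\theta_{\mathfrak{O}_\pm}$ then produces $M_{\theta_{\mathfrak{O}_-}}\cong R/(\sigma_2+2\sigma_1^{m/2})=N_+$ and $M_{\theta_{\mathfrak{O}_+}}\cong R/(\sigma_2-2\sigma_1^{m/2})=N_-$, accounting for the two new linear characters. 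The main obstacle will be this even-case bookkeeping: one must verify that the two $1$-dimensional constituents arising from the splitting of $\mathrm{Res}^G_H\rep{0}{m/2}$ are precisely the orbit characters $\theta_{\mathfrak{O}_\pm}$ appearing in Section \ref{lin}, and that every irreducible character of $H$ has been accounted for without duplication. Once this matching is confirmed, summing the isotypical contributions $M_\chi$ over all $\chi\in\mathrm{irrep}(H)$ yields the stated decomposition of $S/(z)$.
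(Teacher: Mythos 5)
Your proposal is correct and follows essentially the same route as the paper: classify the irreducible representations of $G(m,m,2)$ (the paper does this by the shifting argument, you add a confirming order count $4k+\ell=2m$), apply the preceding lemma to get $M_{\mathrm{Res}^G_H\rep{0}{i}}\cong X_i^2$ for the two-dimensional classes, and use Section \ref{lin} to identify the contributions of the linear characters, with $N_\pm$ arising from the two extra orbit characters in the even case. The only difference is that your write-up is more explicit about the even-case bookkeeping (matching the constituents of the split representation $\mathrm{Res}^G_H\rep{0}{m/2}$ with $\theta_{\mathfrak{O}_\pm}$) than the paper's very terse proof, which simply asserts that one copy each of $N_+$, $N_-$ and $R/(\Delta)$ comes from the one-dimensional representations.
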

\begin{proof} The representations $\rep{0}{j}$ are distinct for $1\leq j \leq \frac{m-1}{2}$ and $\rep{0}{\frac{m-1}{2}+i}\cong \rep{0}{\frac{m-1}{2}-i}$ for $1 \leq i \leq \frac{m-1}{2}$. One copy of $N_+$, $N_-$ and $R/(\Delta)$ come from the 1-dimensional representations.  \end{proof}
\subsection{Crossover with true reflection groups.}

 The groups $G(2p,p,2)$ are true reflection groups and so by \cite{BFI} there is a $1-1$ correspondence between the irreducible representations and isomorphism classes of indecomposible $CM$-modules over the discriminant. This section details the 1-1 correspondence using the notation from before. We only present the $p$ odd case, since the $p$ even case is similar.

\begin{lemma}
The irreducible two-dimensional representations of $H=G(2p,p,2)$ where $p$ is odd are given by:
\begin{itemize}
\item[1)] $\mathrm{Res}^G_H\rep{0}{2i-1}$ for $1\leq i \leq p$.
    \item[2)] $\mathrm{Res}^G_H\rep{0}{2i}$ for $1\leq i \leq \frac{p-1}{2}$.

    \item[3)] $\mathrm{Res}^G_H\rep{1}{2i+1}$ for $1 \leq i \leq \frac{p-1}{2}$.
\end{itemize}
\end{lemma}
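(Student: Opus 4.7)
The plan is to invoke Stembridge's theorem together with the parity observation made earlier in the paper to enumerate orbits of the action of $G/H$ on the two-dimensional irreducible $G$-representations, and then to select a canonical representative in each orbit in the form displayed in the statement. Throughout, $G=G(2p,1,2)$, $H=G(2p,p,2)$, so $m=2p$ and $q=m/p=2$, and the quotient $G/H$ is the cyclic group of order $p$ generated by $\delta_1^{q}=\delta_1^2$.

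First I would record two facts that are already established in the excerpt. Since $m=2p$ is even but $p$ is odd, the earlier analysis of when a two-dimensional $\rep{i}{j}$ fails to be irreducible on restriction (namely when $j=i+m/2=i+p$, forcing $2\mid p$) shows that here every $\mathrm{Res}^G_H\rep{i}{j}$ stays irreducible. By Stembridge's theorem, parts (a) and (b), two such restrictions are isomorphic if and only if the original $G$-representations lie in the same $G/H$-orbit. The action of $\delta_1^2$ on $m$-tuples of tableaux shifts the non-empty positions by $2$ mod $2p$, so concretely $\rep{i}{j}\otimes\delta_1^{2c}=\rep{i+2c\,(\mathrm{mod}\,2p)}{j+2c\,(\mathrm{mod}\,2p)}$ as an \emph{unordered} pair of positions. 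Thus the problem reduces to: enumerate the orbits of the translation action $c\cdot\{i,j\}=\{i+2c,j+2c\}$ of $\mathbb{Z}/p$ on the set of unordered pairs of distinct elements of $\mathbb{Z}/2p$.

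The key step is a case split on the parities of $i$ and $j$, which is invariant under shifts by $2c$. When both $i$ and $j$ are even, the set of even positions is a $\mathbb{Z}/p$-torsor, so I can normalise $i=0$; then $\{0,2b\}\sim\{0,2(p-b)\}$ under translation by $-b$, giving $(p-1)/2$ distinct orbits with representatives $\rep{0}{2b}$, $1\le b\le (p-1)/2$, which is case (2). The both-odd case is symmetric: normalising $i=1$ gives representatives $\rep{1}{2b+1}$, $1\le b\le (p-1)/2$, which is case (3). For mixed parity, $c=0$ is the only shift that satisfies $2c\equiv 0\pmod{2p}$ with $c\in\{0,\ldots,p-1\}$, and a shift sending $\{0,o\}$ to $\{0,o'\}$ with $o,o'$ odd would either force $2c\equiv 0$ (so $o=o'$) or force $2c=o'$, which is impossible since $o'$ is odd. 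Hence each odd $o\in\{1,3,\ldots,2p-1\}$ gives its own orbit, producing the $p$ representatives $\rep{0}{2i-1}$, $1\le i\le p$, of case (1).

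Finally I would tally: $p+(p-1)/2+(p-1)/2=2p-1$, which matches the total $\binom{2p}{2}/p=2p-1$ predicted by the fact that every orbit under the free $G/H$-action (free because $u(\lambda)=1$ for all two-dimensional $\lambda$, as follows from irreducibility of the restriction via Stembridge) has size $p$. This verifies both exhaustiveness and pairwise non-isomorphism of the listed restrictions. The only real subtlety—and the part I would handle most carefully—is the mixed-parity case, where one must rule out the apparent extra equivalence coming from $\{0,o\}=\{o,0\}$, since a priori a translate could send $0$ to $o$ rather than $o$ to $o'$; this is where the parity of $2c$ being even (while $o$ is odd) does the work.
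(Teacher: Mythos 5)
Your proof is correct and follows essentially the same route as the paper: enumerate the $G/H$-orbits of two-dimensional $G$-representations under the shift-by-$\delta_1^2$ action and pick normalized representatives, the paper doing this by shifting a box into position $0$ or $1$ and then identifying $\rep{0}{2i-1}\cong\rep{1}{2p-2(i-1)}$, while you organize the same orbit count by parity. Your version is somewhat more careful than the paper's, since you also verify pairwise non-isomorphism and cross-check with the total count $\binom{2p}{2}/p=2p-1$, which the paper leaves implicit.
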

\begin{proof}
Since for a given $2p$-tuple of young diagrams we can always shifted until there is a box in the first $2$ positions, it is enough to consider the $2p$-tuples such that there is a box in position $0$ or position $1$. This gives us two cases:

\begin{itemize}
   \item[1)] $\mathrm{Res}^G_H\rep{0}{i}$ for $1\leq i \leq 2p$.
\item[2)] $\mathrm{Res}^G_H\rep{1}{i}$ for $1\leq i \leq 2p$.

\end{itemize}
\noindent Then noting that $\rep{0}{2i-1}\cong\rep{1}{2p-2(i-1)}$ gives us the list above.
\end{proof}

Recall that for $G(2p,p,2)$, the discriminant Spec($R/(\Delta)$) is a $D_{p+2}$ singularity. We use the same notation for the modules as in Theorem \ref{mod odd}

Let $p$ be odd, for the subgroup $H=G(2p,p,2)$ of $G=(2p,1,2)$ the following holds:
\begin{itemize}
    \item[1)] $M_{\mathrm{Res}^G_H\rep{0}{2i-1}} \cong X_{i}^2$for $1\leq i \leq p$.
    \item[2)]$M_{\mathrm{Res}^G_H\rep{0}{2i}}\cong K_i^2$ for $1\leq i \leq \frac{p-1}{2}$.
    \item[3)] $M_{\mathrm{Res}^G_H\rep{1}{2i+1}}\cong N_i^2 $ for $1 \leq i \leq \frac{p-1}{2}$.
\end{itemize}
From this we can see the $1-1$ correspondence on the $2$-dimensional representations, since for the cases above there is only one representation of each form.

\begin{example}
Consider $H=G(6,3,2)$ and $G=G(6,1,2)$ then there are the following equivalence classes of $6$-tuples of young diagrams:
\begin{equation*}
    \begin{split}
        \left\{\,\rep{0}{1},\rep{2}{3}, \rep{4}{5}\right\} \hspace{1cm} & \hspace{1cm} \left\{\,\rep{0}{2},\rep{2}{4},\rep{0}{4}\right\} \\
        \left\{\,\rep{0}{3},\rep{2}{5}, \rep{1}{4}\right\} \hspace{1cm} & \hspace{1cm} \left\{\,\rep{1}{2},\rep{3}{4},\rep{0}{5}\right\}  \\
        \left\{\,\rep{1}{3},\rep{3}{5},\rep{1}{5}\right\} \hspace{1cm} &
    \end{split}
\end{equation*}
Then $M_{\mathrm{Res}^G_H\rep{0}{1}}=M_{\mathrm{Res}^G_H\rep{2}{3}}\cong X_1^2$, $M_{\mathrm{Res}^G_H\rep{0}{2}}\cong K_1^2$, $M_{\mathrm{Res}^G_H\rep{0}{3}}\cong X_2^2$,  $M_{\mathrm{Res}^G_H\rep{1}{3}}\cong N_1^2$, and  $M_{\mathrm{Res}^G_H\rep{1}{2}}\cong Y_1^2(\cong M_{\mathrm{Res}^G_H\rep{0}{5}}\cong X_3^2$). The linear characters are of the form ${\mathrm{Res}^G_H\Yvcentermath1 \yng(1,1)}_0$,${\mathrm{Res}^G_H\Yvcentermath1 \yng(1,1)}_1 $, ${\mathrm{Res}^G_H\Yvcentermath1 \yng(2)}_0$ and ${\mathrm{Res}^G_H\Yvcentermath1 \yng(2)}_1$. The modules corresponding to these representations; $M_{\mathrm{Res}^G_HM_{{\Yvcentermath1 \yng(2)}_0}}$ is trivial, $M_{\mathrm{Res}^G_HM_{{\Yvcentermath1 \yng(1,1)}_1}}\cong R/(\Delta)$,  $M_{\mathrm{Res}^G_HM_{{\Yvcentermath1 \yng(1,1)}_0}}$ and $M_{\mathrm{Res}^G_HM_{{\Yvcentermath1 \yng(2)}_1}}$ are the two irreducible components of $\Delta$. This shows the 1-1 correspondence between irreducible representations of $G$ and irreducible $CM$-modules over the discriminant.
Non-Commutative Resolutions for the Discriminant of the Complex Reflection group $G(m,p,2)$

\end{example}
\bibliographystyle{alpha}
\bibliography{mybib}{}
\end{document}